\documentclass[11pt,a4paper,reqno]{amsart}
\usepackage[a4paper,left=3cm,right=3cm,bottom=4cm,top = 3.5cm]{geometry}

\usepackage[pdftex,bookmarks=true,pdfborder={0 0 0}]{hyperref}

\usepackage[foot]{amsaddr}

\usepackage[utf8]{inputenc}
\usepackage{graphicx}
\usepackage{amsmath}
\usepackage{amssymb}
\usepackage{mathtools}
\usepackage{enumitem}
\usepackage{color}
\usepackage{microtype}
\usepackage{cite}

\usepackage[nameinlink]{cleveref}

\usepackage{amsthm}
\newtheorem{theorem}{Theorem}[section]
\newtheorem{lemma}[theorem]{Lemma}

\newtheorem{proposition}[theorem]{Proposition}

\theoremstyle{definition}

\newtheorem{problem}[theorem]{Problem}
\newtheorem{remark}[theorem]{Remark}
\newtheorem*{remark*}{Remark}

\newcommand{\beq}{\begin{equation}}
\newcommand{\eeq}{\end{equation}}

\newcommand{\lem}[1]{{Lemma\,#1}}
\newcommand{\pr}[1]{{Proposition\,#1}}
\newcommand{\theo}[1]{{Theorem\,#1}}

\newcommand{\abs}[1]{\left\vert#1\right\vert}

\newcommand{\R}{\mathbb{R}}

\newcommand{\N}{\mathbb{N}}

\newcommand{\Mr}{\mathcal{M}_r}
\newcommand{\bfr}{\mathbf{r}}
\newcommand{\bfk}{\mathbf{k}}

\renewcommand{\Mc}{\mathcal{M}}

\providecommand{\abs}[1]{\lvert#1\rvert}

\providecommand{\norm}[1]{\lVert#1\rVert}

\DeclareMathOperator{\rank}{rank}

\DeclareMathOperator{\dist}{dist}

\DeclareMathOperator{\spn}{span}

\newtheorem*{thm*}{Theorem}
\newtheorem*{lm*}{Lemma}

\theoremstyle{definition}

\newtheorem*{ques*}{Question}
\newtheorem*{defi*}{Definition}
\newtheorem*{exam*}{Example}
\newtheorem*{exams*}{Examples}

\newcommand{\T}{\mathsf T}

\DeclareMathOperator{\id}{id}

\DeclareMathOperator{\spa}{span}

\numberwithin{equation}{section}

\bibliographystyle{abbrvurl}

\title[Dynamical low-rank tensor approximations to parabolic problems]{Dynamical low-rank tensor approximations to high-dimensional parabolic problems: existence and convergence of spatial discretizations}

\author{Markus~Bachmayr$^1$}
\email{bachmayr@igpm.rwth-aachen.de}

\author{Henrik~Eisenmann$^1$}
\email{eisenmann@igpm.rwth-aachen.de}

\author{Andr\'e~Uschmajew$^2$}
\email{andre.uschmajew@uni-a.de}


\address{$^1$ Institut f\"ur Geometrie und Praktische Mathematik, RWTH Aachen University, Templergraben 55, 52062 Aachen, Germany} \address{$^2$ Institute of Mathematics \& Centre for Advanced Analytics and Predictive Sciences, University of Augsburg, 86159 Augsburg, Germany}

\subjclass[2010]{Primary 35K15, 35R01; Secondary 15A69, 65M12}

\begin{document}

\begin{abstract}
We consider dynamical low-rank approximations to parabolic problems on higher-order tensor manifolds in Hilbert spaces. In addition to existence of solutions and their stability with respect to perturbations to the problem data, we show convergence of spatial discretizations. Our framework accommodates various standard low-rank tensor formats for multivariate functions, including tensor train and hierarchical tensors.
\end{abstract}

\maketitle

\section{Introduction}

Dynamical low-rank approximation (DLRA) is a nonlinear model for time evolution of high-dimensional functions on low-dimensional submanifolds. In its simplest form for matrices, as presented in the seminal work~\cite{Koch07}, one seeks approximate solutions of matrix valued ODEs
\[
\dot X(t) = F(X(t),t), \quad X(t) \in \R^{M \times N}
\]
constrained to a low-rank model
\[
X(t) = U(t) V(t)^\T,
\]
where $U(t) \in \R^{M \times r}$ and $V(t) \in \R^{N \times r}$. Hence $X(t)$ has rank at most $r$ for every time~$t$, which if $r \ll M,N$ allows for an efficient numerical treatment. From a geometric perspective, by enforcing $X(t)$ to have rank exactly $r$, the problem can be formulated as an ODE on the manifold $\Mr$ of matrices of fixed rank $r$ by projecting the vector field $F$ onto the tangent space $T_{X(t)} \Mr$ at every time~$t$,
\begin{equation}\label{eq: projected formulation}
\dot X(t) = P_{X(t)} F(X(t),t).
\end{equation}
In this way, assuming a starting value $X_0$ on $\Mr$, the time evolution is automatically constrained to the manifold. This constrained problem admits the time-dependent variational formulation
\begin{equation}\label{eq: variational formulation}
\langle \dot X(t), Y \rangle = \langle F(X(t),t) , Y \rangle \quad \text{for all $Y \in T_{X(t)} \Mr$,}
\end{equation}
which in physics is also called the Dirac--Frenkel principle \cite{Dirac30,Frenkel1934}. The state-of-the-art tool for the numerical solution of these equations is based on a splitting of the tangent space projector $P_{X(t)}$, see~\cite{Lubich14}.

The DLRA approach can also be applied to time-dependent problems where the solution $X(t)$ is a tensor of size $N_1 \times \dots \times N_d$. In this case, various low-rank models are possible, notably those based on tree tensor networks~\cite{Koch10,Lubich13,Arnold14}.
The most basic example is the low-rank Tucker format 
\[
  X(t;i_1,\dots,i_d) = \sum_{j_1=1}^{r_1}\cdots \sum_{j_d=1}^{r_d}C(t;j_1,\dots, j_d) U_1(t;i_1,j_1)\cdots U_d(t;j_d,i_d) 
\]
with multilinear rank $\bfr = (r_1,\ldots, r_d)$.
Another example is the widely used tensor train model~\cite{Oseledets11}, also known as matrix product states in physics, in which the tensor $X(t)$ element-wise reads as
\begin{equation}
  X(t;i_1,\dots,i_d) = G_1(t;i_1) G_2(t;i_2) \cdots G_d(t;i_d)\,.
\label{eq: TT-format}  
\end{equation}
Here $G_\mu(t,i_\mu)$  for $\mu=1,\ldots,d$ is a matrix of fixed size $k_{\mu-1} \times k_{\mu}$, with the convention that $k_0 = k_d = 1$. Hence, the parameters in this model are the so-called core tensors $G_\mu(t) \in \R^{k_{\mu-1} \times N_\mu \times k_\mu}$. The componentwise minimal possible tuple $\bfk = (k_1,\dots,k_{d-1})$ is called the TT rank of $X$, and the set of tensors with a fixed TT rank $\bfk = (k_1,\dots,k_{d-1})$ can be shown to form an embedded manifold $\mathcal M_\bfk$ in $\R^{N_1\times \dots \times N_d}$, see~\cite{Holtz12,UschmajewVandereycken:13}. Note that for $d=2$, one recovers matrices of rank $k = k_1$. The DLRA formulation then essentially takes the same form~\eqref{eq: projected formulation} or~\eqref{eq: variational formulation} with the according tangent space projection, and the resulting problem again can be solved numerically using a projector splitting approach~\cite{Lubich15,Ceruti21}. We refer to~\cite{UschmajewVandereycken:20} for a survey on DLRA for matrix and tensor problems.

So far we have outlined DLRA for problems in finite-dimensional matrix and tensor spaces. In applications of DLRA related to PDE problems, such as in \cite{Meyer90,Bardos09,Bardos10,Conte10,Arnold14,Lubich15a,Einkemmer18,Einkemmer:20,Kazashi20,Guo:22}, a finite-dimensional problem appears as the second step after a spatial discretization of multivariate functions. Then the natural question arises whether the solutions of discrete problems converge to a meaningful solution of a continuous DLRA formulation when the spatial discretization is refined. To the best of our knowledge, this convergence issue has not been addressed in the existing literature. Since DLRA is based on a nonlinear variational formulation, the convergence of spatial discretizations is a nontrivial matter and cannot be treated by classical results from the numerical analysis of PDEs.

\subsection{Dynamical low-rank approximation for parabolic problems}

In this work, we establish such results for certain types of parabolic PDEs. As a model, we consider the diffusion equation 
\begin{equation}\label{eq:modelproblem1}
\begin{aligned}
    \frac{\partial}{\partial t} u(x,t) -\nabla_x\cdot (B(t) \nabla_x u(x,t)) &= f(x,t)&\quad&\text{for } (x,t)\in \Omega\times (0,T),
    \\
    u(x,t)&=0 &\quad&\text{for } (x,t)\in \partial\Omega\times (0,T),
    \\
    u(x,0)&=u_0(x) &\quad&\text{for } x\in \Omega
\end{aligned}
\end{equation}
on the product domain $\Omega=(0,1)^d$. Here $B(t)$ is a $d\times d$-matrix, which allows for anisotropic diffusion in \eqref{eq:modelproblem1}, and we assume it to be uniformly bounded and positive definite, as well as Lipschitz continuous with respect to $t$. The problem~\eqref{eq:modelproblem1} is typically formulated in weak form on function spaces as follows: given $u_0 \in L_2(\Omega)$ and $f\in L_2(0,T; H_{}^{-1}(\Omega))$, find
\[
u\in W(0,T;H_0^1(\Omega), H_{}^{-1}(\Omega)) = \{u\in L_2(0,T; H_0^1(\Omega))\colon u'\in L_2(0,T; H_{}^{-1}(\Omega)) \}
\]
such that for almost all $t\in(0,T)$,
\begin{equation}\label{eq:modelproblem2}
\begin{aligned}
    \langle u'(t), v\rangle +a( u(t), v;t)&=\langle f(t), v\rangle\quad \text{for all $ v\in H_0^1(\Omega) $,}    \\
    u(0)&=u_0.
\end{aligned}
\end{equation}
Here, by $\langle\cdot,\cdot \rangle$ we denote the dual pairing on $ H_{}^{-1}(\Omega)\times  H_{0}^{1}(\Omega)$, and $a\colon  H_{0}^{1}(\Omega)\times  H_{0}^{1}(\Omega)\times [0,T] \to \R$ is the bounded, symmetric and coercive bilinear form
\begin{equation*}
a(u,v;t) = \int_\Omega (B(t)\nabla u(x))\cdot \nabla v(x) \,dx.
\end{equation*}
Classical theory provides a unique solution to~\eqref{eq:modelproblem2}; see for example~\cite[\theo 23.A]{Zeidler90a} and~\cite[\theo 30.A]{Zeidler90b}.

A DLRA formulation for~\eqref{eq:modelproblem2} is obtained by selecting a low-rank model class
\[
\mathcal M \subset L_2(\Omega)
\]
based on the tensor product structure of $L_2(\Omega) = L_2(0,1) \otimes \dots \otimes L_2(0,1)$, such as the tensor train format in a Hilbert space setting to be made precise in Section~\ref{sec:tensor train model}, and restricting the test function in~\eqref{eq:modelproblem2} to the tangent space of $\mathcal M$ at $u(t)$. In other words, the problem reads: given $u_0 \in \mathcal M$ and $f\in L_2(0,T; H_{}^{-1}(\Omega))$, find
\[
u\in W(0,T;H_0^1(\Omega), H_{}^{-1}(\Omega))\coloneqq\{u\in L_2(0,T; H_0^1(\Omega))\colon u'\in L_2(0,T; H_{}^{-1}(\Omega)) \}
\]
such that $u(t) \in \mathcal M$ for all $t\in(0,T)$ and
\begin{equation}\label{eq:modelproblem2DLRA}
\begin{aligned}
    \langle u'(t), v\rangle +a( u(t), v;t)&=\langle f(t), v\rangle\quad \text{for all $ v\in T_{u(t)} \mathcal M \cap H_0^1(\Omega) $,}    \\
    u(0)&=u_0.
\end{aligned}
\end{equation}

For the matrix case $d=2$, existence and uniqueness of DLRA solutions on a maximal time interval $(0,T^*)$ has been shown in~\cite{Bachmayr21} under the additional regularity assumptions $u_0 \in H^1_0(\Omega)$ and $f \in L_2(0,T; L_2(\Omega))$. The proof given there is carried out in a more general framework of parabolic problems on manifolds in Gelfand triplets and is based on a variational time stepping scheme in Hilbert space. As we will verify in \Cref{sec:tensor train model}, this framework also applies to the tensor train model of $d$-variate functions, and can analogously be used to other tree based low-rank tensor models. To this end, we establish a general concept of tensor manifolds in Hilbert space which allows to deduce the required manifold properties, such as curvature estimates, from their finite-dimensional counterparts. As a result, we obtain a meaningful and rigorous notion of a continuous DLRA solution for parabolic problems for higher-order tensors based on the results from~\cite{Bachmayr21}.

However, the existence proof in~\cite{Bachmayr21} does not provide the convergence of solutions of spatial semidiscretizations to the continuous solution, and we address this open problem in the present work as well. Spatial discretizations of low-rank problems can be obtained in a very natural way via tensor products of discretization spaces. By considering finite-dimensional subspaces $V^\mu_h \subset H_0^1(0,1)$ of dimension $N_\mu$ one obtains a DLRA problem analogous to~\eqref{eq:modelproblem2DLRA} by restricting to the tensor product space
\[
\mathcal V_h = V^1_h \otimes \dots \otimes V^d_h.
\]
We hence seek a solution $u_h(t) \in \mathcal M \cap \mathcal V_h$ satisfying
\begin{equation}\label{eq:modelproblem2DLRAdiscrete}
\begin{aligned}
    \langle u_h'(t), v_h \rangle +a( u_h(t), v_h;t)&=\langle f(t), v_h \rangle\quad \text{for all $ v_h\in T_{u_h(t)} \mathcal M \cap \mathcal V_h$, }   \\
    u(0)&=u_0.
\end{aligned}
\end{equation}
By writing $u_h(t)$ in a tensor product basis of $\mathcal V_h$,
\[
u_h(t) = \sum_{i_1 = 1}^{N_1} \cdots \sum_{i_d = 1}^{N_d} X(t;i_1,\dots,i_d) \varphi^1_{i_1} \otimes \dots \otimes \varphi^d_{i_d}\,,
\]
we obtain a DLRA problem for the coefficient tensor $X(t) \in \R^{N_1 \times \dots \times N_d}$. The question is then whether for $h \to 0$, the functions $u_h$ converge to the (unique) solution $u$ of~\eqref{eq:modelproblem2DLRAdiscrete}. 

\subsection{Contributions and outline}

In this work, we extend the existence and uniqueness result for the DLRA evolution obtained in~\cite{Bachmayr21} from the bivariate to the multivariate case.
The abstract framework from~\cite{Bachmayr21} is recalled in Section~\ref{sec:abstproblem}. This section also contains the new stability estimate Theorem~\ref{thm:stability}, which complements the uniqueness result of~\cite{Bachmayr21}.
The main result of Section~\ref{sec:spatialdiscrete} is the 
convergence result for spatial semidiscretizations (Theorem~\ref{thm:convergence_space-discr}) in the abstract framework of Gelfand triplets as developed in~\cite{Bachmayr21}. In Section~\ref{sec:tensor train model}, we present a general notion of low-rank manifolds in Hilbert spaces and obtain new curvature estimates. We then in Section~\ref{sec:applmodelproblem} apply these results to dynamical low-rank approximations of the model problem~\eqref{eq:modelproblem2DLRA} using the tensor train format.
Based on the general setting of Section~\ref{sec:tensor train model}, analogous results can be obtained for other low-rank manifolds; in particular, our results apply directly also to the Tucker format.
Note that in this work, we focus on a theoretical framework for DLRA of parabolic problems and do not consider specific numerical realizations. Numerical results for DLRA in the parabolic case can be found, e.g., in \cite{Nonnenmacher08,Kazashi20}.

\section{Abstract formulation}\label{sec:abstproblem}

For developing the essential aspects of the theory we investigate the DLRA problem in an abstract context as in~\cite{Bachmayr21}. We consider a Gelfand triplet
\[
\mathcal V\hookrightarrow \mathcal H\cong \mathcal H^* \hookrightarrow\mathcal V^*
\]
of Hilbert spaces, where $ \mathcal V$ is compactly embedded in $ \mathcal H$. This implies that the embedding is also continuous, that is,
\begin{equation}\label{eq:norminequality}
    \|u\|_\mathcal H\lesssim \|u\|_\mathcal V.
\end{equation}
 By $\langle\cdot,\cdot\rangle : \mathcal V^* \times \mathcal V \to \R$ we denote the dual pairing of $\mathcal V^*$ and $\mathcal V$.
Note that for $u\in\mathcal H\subset \mathcal V^*$ and $v\in \mathcal V\subset \mathcal H$, the dual pairing and the inner product on $\mathcal H$ coincide, that is, $\langle u,v\rangle_{\mathcal H}=\langle u,v\rangle$. We will frequently identify $u\in \mathcal V$ as an element of $\mathcal H$ and in turn also as an element in~$\mathcal V^*$.

For every $t \in [0,T]$, let $a(\cdot,\cdot;t): \mathcal V \times \mathcal V \to \R$ be a bilinear form which is assumed to be symmetric,
\[
  a(u,v;t) = a(v,u;t) \quad \text{for all } u, v \in \mathcal V \text{ and } t \in [0,T],
\]
uniformly bounded,
\[
  |a(u,v;t)| \le \beta\|u\|_\mathcal V \|v\|_\mathcal V \quad \text{for all } u, v \in \mathcal V \text{ and } t \in [0,T]
\]
for some $\beta > 0$, and uniformly coercive,
\[
  a(u,u;t) \ge \mu \|u\|_\mathcal V^2 \quad \text{for all } u \in \mathcal V \text{ and } t \in [0,T]
\]
for some $\mu > 0$. Under these assumptions, $a(\cdot,\cdot;t)$ is an inner product on $\mathcal V$ defining an equivalent norm. Furthermore, it defines a bounded operator
\begin{equation}\label{eq: associated unbounded operator}
 A(t) : \mathcal V \to \mathcal  V^*
\end{equation}
such that
\[
a(u,v;t) = \langle A(t)u,v \rangle \quad \text{for all $u,v \in \mathcal V$.}
\]

We also assume that $a(u,v;t)$ is Lipschitz continuous with respect to $t$. In other words, there exists an $L \ge 0$ such that
\begin{equation}
  \label{eq:a-lipschitz}
  |a(u,v; t) - a(u,v; s)| \le L\beta\|u\|_\mathcal V\|v\|_\mathcal V |t - s|
\end{equation}
for all $u, v \in \mathcal V$ and $s, t \in [0, T]$, which in the model problem corresponds to the Lipschitz continuity of the function $t \mapsto B(t)$. 

We deal with evolution equations on subsets $\mathcal M\subset \mathcal H$ that are submanifolds in the following sense: for every point $u\in\mathcal M$ 
there exists a closed subspace $T_u\mathcal M \subset\mathcal H$ 
such that $T_u\mathcal M$ contains all tangent vectors to $\mathcal M$ at $u$. Here a tangent vector is any $v \in \mathcal H$ for which there exists a (strongly) differentiable curve $\varphi\colon (-\varepsilon, \varepsilon) \to \mathcal H$ (for some $\varepsilon > 0$) such that $\varphi(t) \in \mathcal M$ for all $t$ and
\[
 \varphi(0) = u, \quad \varphi'(0) = v.
\]
By $P_u:\mathcal H\to T_u\mathcal M$ we denote the $\mathcal H$-orthogonal projection onto $T_u\mathcal M$. We will also assume $\mathcal M\cap \mathcal V$ to be nonempty as well as  $T_u\mathcal M\cap \mathcal V$ to be nonempty for $u\in\mathcal M\cap \mathcal V$. By $\overline{\mathcal M}^w$ we denote the weak closure of $\mathcal M$ in $\mathcal H$.

The abstract problem takes the following form. 
\begin{problem}\label{problem 1}
In the above setting, given $f\in L_2(0,T;\mathcal V^*)$ and
$u_0 \in \mathcal M \cap \mathcal H$, find
\[
 u \in W(0,T;\mathcal V,\mathcal V^*) \coloneqq \{ u \in L_2(0,T; \mathcal V) \colon  u' \in L_2(0,T; \mathcal V^*) \}
\]
such that for almost all $t \in [0,T]$,
\begin{equation}\label{eq:prob1}
  \begin{aligned}
  u(t)  &\in \mathcal M,\\
    \langle u'(t), v\rangle + a(u(t),v;t) &= \langle f(t), v \rangle \quad \text{for all $v \in T_{u(t)} \mathcal M \cap \mathcal V$}, \\
    u(0) &= u_0.
  \end{aligned}
\end{equation}
\end{problem}

\subsection{Basic assumptions and existence of solutions}

The main challenge of the weak formulation in Problem \ref{problem 1} is that according to the Dirac-Frenkel principle, the test functions are from the tangent space only. The existence result in~\cite{Bachmayr21} requires several assumptions, including additional regularity of the data as in assumption~\ref{property:regularity} below. The other assumption~\ref{property:cone}--\ref{property:splitting} are abstractions of corresponding properties of the model problem and will be discussed for the tensor train format in \Cref{sec:tensor train model}, and hence the main results of this paper apply to this setting. The assumptions are the following.

\begin{enumerate}[label=\bf{A\arabic*},start=0,itemsep=5pt]
\item\label{property:regularity} (Regularity of data) We have
$f\in L_2(0,T;\mathcal H)$ and $u_0 \in \mathcal M \cap \mathcal V$.
    \item\label{property:cone} (Cone property)
 $\mathcal M$ is a cone, that is, $u \in \mathcal M$ implies $s u \in \mathcal M$ for all $s > 0$. 
\item\label{property:curvature} (Curvature bound) 
For every subset $\mathcal M'$ of $\mathcal M$ that is weakly compact in $\mathcal H$, there exists  a constant $\kappa = \kappa(\mathcal M')$ such that
  \[
    \|P_u - P_v \|_{\mathcal H \to \mathcal H} \le \kappa \|u - v\|_\mathcal H 
  \]
and
\[
\|(I-P_u)(u-v)\|_\mathcal H\leq\kappa\|u-v\|_\mathcal H^2 
\]
for all $u,v \in \mathcal M'$.

\item\label{property:compat} (Compatibility of tangent spaces)
\begin{enumerate}[ref={\bf{A3}}(\alph*)]
    \item\label{property:compat1}
 For $u \in \mathcal M \cap \mathcal V$ and $v \in T_u \mathcal M \cap \mathcal V$ an admissible curve with $\varphi(0) = u$, $\varphi'(0) = v$ can be chosen such that
\[                     
\varphi(t) \in \mathcal M \cap \mathcal V
\]
for all $\abs{t}$ small enough.
\item\label{property:compat2}
  If $u \in \mathcal M \cap \mathcal V$ and $v \in \mathcal V$ then $P_u v \in T_u \mathcal M \cap \mathcal V$. 
\end{enumerate}
\item\label{property:splitting} (Operator splitting) The associated operator $A(t)$ in~\eqref{eq: associated unbounded operator} admits a splitting
\[
 A(t) = A_1(t) + A_2(t)
\]
into two uniformly bounded operators $\mathcal V\to \mathcal V^*$ 
such that for all $t \in [0,T]$, all $u \in \mathcal M \cap \mathcal V$ and all $v \in \mathcal V$, the following holds:
\begin{enumerate}[ref={\bf{A4}}(\alph*)]
\item\label{property:partA1}``$A_1(t)$ maps to the tangent space'':
\[
\langle A_1(t)u, v \rangle = \langle A_1(t) u, P_u v \rangle.
\]
 \item\label{property:partA2}``$A_2(t)$ is locally bounded from $\mathcal M \cap \mathcal V$ to $\mathcal H$'': For every subset $\mathcal M'$ of $\mathcal M$ that is weakly compact in $
\mathcal H$, there exists $\gamma = \gamma(\mathcal M') > 0$ such that
\[ 
A_2(t)u \in \mathcal H \quad \text{and} \quad \| A_2(t) u \|_\mathcal H \le \gamma \| u \|_\mathcal V^\eta \quad \text{for all $u \in \mathcal M'$}
\]
with an $\eta>0$ independent of $\mathcal M'$.
\end{enumerate}
\end{enumerate}

\medskip

The following existence and uniqueness result has been obtained in~\cite[Theorem~4.3]{Bachmayr21}. Here $W(0,T;\mathcal V,\mathcal H)$ denotes the subspace of $W(0,T;\mathcal V,\mathcal H)$ with $u' \in L_2(0,T; \mathcal H)$.

\begin{theorem}\label{thm:existandunique}
Let the Assumptions~\ref{property:regularity}--\ref{property:splitting} hold and let $u_0$ have positive $\mathcal H$-distance from $\overline{\mathcal M}^w \setminus \mathcal M$. There exist $T^* \in (0,T]$ and $u \in W(0,T^*;\mathcal V,\mathcal H) \cap L_\infty(0,T^*;\mathcal V)$ such that $u$ solves Problem~\ref{problem 1} on the time interval $[0,T^*]$, and its continuous representative $u\in C(0,T^*;\mathcal H)$ satisfies $u(t) \in \mathcal M$ for all $t \in [0,T^*)$. Here $T^*$ is maximal for the evolution on $\mathcal M$ in the sense that if $T^* < T$, then 
\[
  \liminf_{t \to T^*} \; \inf_{v\in\overline{\mathcal M}^w \setminus \mathcal M } \|u(t)-v\|_\mathcal H = 0.
\]
In either case, $u$ is the unique solution of Problem~\ref{problem 1} in $W(0,T^*;\mathcal V,\mathcal H) \cap L_\eta(0,T^*;\mathcal V)$.

In particular, let $\sigma = \dist_{\mathcal H}(u_0,\overline{\mathcal M}^w \setminus \mathcal M )$, then there exists a constant $c>0$ such that $T^* \ge \min(\sigma^2 / c,T)$.
 
The solution satisfies the following estimates:
\begin{align}
    \|u\|^2_{L_2(0,T^*;\mathcal V)}&\leq \|u_0\|_\mathcal H^2+C_1\|f\|^2_{L_2(0,T^*;\mathcal H)},\label{eq:estimate_u}
    \\
    \|u'\|^2_{L_2(0,T^*;\mathcal H)}&\leq C_2\left(\|u_0\|_\mathcal V^2+\|f\|^2_{L_2(0,T^*;\mathcal H)}\right),\label{eq:estimate_u'}
    \\
    \|u\|^2_{L_\infty(0,T^*;\mathcal V)}&\leq C_3\left(\|u_0\|_\mathcal V^2+\|f\|^2_{L_2(0,T^*;\mathcal H)}\right),\label{eq:estimate_u_infty}
\end{align}
where $C_1$, $C_2$, and $C_3$ are the constants from~\cite[Lemma~4.4]{Bachmayr21}. 
\end{theorem}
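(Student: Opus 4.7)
The plan is to construct a solution by a variational time-stepping (Rothe) scheme, mirroring the minimizing-movement approach for gradient flows but adapted to the tangent-space constraint built into Problem~\ref{problem 1}. Fixing a step size $\tau = T/N$ and starting from $u^0 = u_0$, define $u^{n+1} \in \mathcal M \cap \mathcal V$ as a minimiser of
\[
 u \mapsto \frac{1}{2\tau}\|u - u^n\|_{\mathcal H}^2 + \frac{1}{2}\, a(u,u; t_{n+1}) - \langle f^{n+1}, u \rangle
\]
over $\mathcal M \cap \mathcal V$, where $f^{n+1}$ is an average of $f$ on $[t_n,t_{n+1}]$. Existence of a minimiser would follow from weak lower semicontinuity and the coercivity of $a$, provided $u^n$ has positive $\mathcal H$-distance from $\overline{\mathcal M}^w \setminus \mathcal M$; the first-order optimality condition gives exactly the variational equation of \eqref{eq:prob1} at the discrete level, since admissible variations in $\mathcal M \cap \mathcal V$ realise all $v \in T_{u^{n+1}} \mathcal M \cap \mathcal V$ by Assumption~\ref{property:compat1}.

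The next step is a priori estimates uniform in $\tau$. Testing the discrete equation against $u^{n+1}$ itself is legitimate because the cone property~\ref{property:cone} implies $u^{n+1} \in T_{u^{n+1}} \mathcal M$; combined with coercivity and with treating $A_2(t)u^{n+1}$ as an $\mathcal H$-valued source via~\ref{property:partA2} and Young's inequality, this yields the discrete analogues of \eqref{eq:estimate_u} and \eqref{eq:estimate_u_infty}. For \eqref{eq:estimate_u'} the splitting~\ref{property:splitting} is essential: because $\langle A_1(t)u, v\rangle = \langle A_1(t)u, P_u v\rangle$ by~\ref{property:partA1}, the $A_1$-contribution may be interpreted as $P_u A_1(t) u$, so that all terms in $u' = f - A_2 u - P_u A_1 u$ (again exploiting~\ref{property:partA2}) lie in $\mathcal H$, while the local Lipschitz bound~\eqref{eq:a-lipschitz} on $a$ handles the time differences of $a(\cdot,\cdot;t_{n+1})$ that appear when one tests differences of consecutive equations.

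Passing to the limit $\tau \to 0$ is where the manifold nonlinearity creates the main difficulty. Weak compactness in $W(0,T;\mathcal V,\mathcal V^*)$ together with the Aubin--Lions lemma (using compactness of $\mathcal V \hookrightarrow \mathcal H$) furnishes a candidate $u$ together with strong convergence in $L_2(0,T;\mathcal H)$. The curvature bound~\ref{property:curvature} enters in two distinct ways: the quadratic estimate $\|(I - P_u)(u-v)\|_{\mathcal H} \le \kappa \|u-v\|_{\mathcal H}^2$ certifies that strong $\mathcal H$-limits of points of $\mathcal M$ in a fixed weakly compact set remain in $\mathcal M$, which identifies the maximal time $T^*$ by exactly the criterion in the statement and, combined with $\mathcal H$-continuity of $u$ and the energy bound, produces $T^* \ge \min(\sigma^2/c, T)$; the Lipschitz estimate $\|P_u - P_v\|_{\mathcal H \to \mathcal H} \le \kappa\|u-v\|_{\mathcal H}$ allows discrete tangent test functions at $u^{n+1}$ to be transported to test functions at $u(t)$ in the limit, by means of Assumption~\ref{property:compat2}.

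For uniqueness one takes two solutions $u_1, u_2 \in W(0,T^*;\mathcal V,\mathcal H) \cap L_\eta(0,T^*;\mathcal V)$, tests the difference of their equations with $P_{u_1}(u_1-u_2)$ and $P_{u_2}(u_1-u_2)$, absorbs the non-tangential error via the quadratic curvature bound and the discrepancy between the two projectors via the Lipschitz curvature bound, and closes by Gronwall on $\|u_1-u_2\|_{\mathcal H}^2$; the $L_\eta(0,T^*;\mathcal V)$ regularity is exactly what is needed to absorb the $\|u_i\|_{\mathcal V}^\eta$ factors produced by $A_2$ under~\ref{property:partA2}. The hardest technical point, I expect, is the limit passage in the tangent-space constraint itself, namely showing that an arbitrary fixed $v \in T_{u(t)}\mathcal M \cap \mathcal V$ can be realised as a limit of test functions that are admissible at the discrete level. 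This is precisely what forces~\ref{property:curvature} to be assumed uniformly on weakly compact subsets rather than only pointwise, and it is where Assumptions~\ref{property:compat} and~\ref{property:curvature} must work in tandem.
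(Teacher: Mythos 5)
Your proposal follows essentially the same route as the paper's source for this result: Theorem~\ref{thm:existandunique} is not proved here but quoted from~\cite[Theorem~4.3]{Bachmayr21}, whose argument is precisely a variational (minimizing-movement) time-stepping scheme in the Gelfand-triplet setting, with the cone property used to test with $u^{n+1}$, the splitting~\ref{property:splitting} to obtain the $\mathcal H$-bound on $u'$, Aubin--Lions for the limit passage, and the curvature bounds both for transporting tangent test functions and for the Gr\"onwall-based uniqueness. The one point you should make explicit is that the discrete minimization has to be carried out over the weakly closed set $\overline{\mathcal M}^w\cap\mathcal V$ (since $\mathcal M$ itself is not weakly closed, the direct method only yields a minimizer there), after which the positive $\mathcal H$-distance of $u^n$ to $\overline{\mathcal M}^w\setminus\mathcal M$ and the step-size control are used to show that the minimizer in fact lies in $\mathcal M$.
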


Note that the energy estimates~\eqref{eq:estimate_u}--\eqref{eq:estimate_u_infty} are not explicitly stated in~\cite[Theorem~4.3]{Bachmayr21}, but immediately follow from its proof in combination with~\cite[Lemma~4.4]{Bachmayr21}.

\begin{remark}
  We can take $c$ as the right-hand side of~\eqref{eq:estimate_u'}.
\end{remark}

\begin{remark}
  Our Assumption~\ref{property:curvature} is stronger than the one used in~\cite{Bachmayr21}, which requires the curvature estimates to be valid for $\norm{ u - v }_\mathcal{H} \leq \varepsilon$ for some $\varepsilon>0$ (the constant $\kappa$ then may depend on $\varepsilon$).
  The stronger assumption made here is used in the proof of Theorem~\ref{thm:stability}. As our new curvature estimates in Section~\ref{sec:curvature} show, this stronger assumption is in fact satisfied for the low-rank manifolds under consideration.
\end{remark}

\subsection{A stability estimate}

As a first extension to the above existence and uniqueness theorem, we now provide a stability estimate that in particular ensures continuity of the solution with respect to the data. This result was obtained in \cite{Eisenmann}. The proof follows a similar idea as the uniqueness result in~\cite[Theorem~4.1]{Bachmayr21}.

\begin{theorem}\label{thm:stability}
Let $u,v \in W(0,T^*;\mathcal V,\mathcal H)$ be two solutions of \Cref{problem 1} on a time interval $[0,T^*]$ corresponding to right-hand sides $f,g \in L_2(0,T;\mathcal H)$, and initial values $u_0, v_0 \in \mathcal M$, respectively. Assume that the continuous representatives $u,v\in C(0,T^*;\mathcal H)$ have values in a weakly compact subset $\mathcal M'\subset \mathcal M$ (in particular their $\mathcal H$-distance to $\overline{\mathcal M}^w \setminus \mathcal M$ remains bounded from below). Moreover, assume that $u,v \in L_\eta(0,T^*;\mathcal V)$ where $\eta$ is from Assumption~{\upshape \ref{property:partA2}}. 
Then for any $\varepsilon >0$,
\[
\|u(t)-v(t)\|_\mathcal H^2\leq \left(\|u_0-v_0\|_\mathcal H^2 +\frac{1}{\varepsilon}\int_0^t\|f(s)-g(s)\|_\mathcal H^2\, ds\right)\exp(\Lambda(t)+\varepsilon t),
\]
where 
\[
\Lambda(t)\coloneqq 2\kappa\int_0^t \|u'(s)\|_\mathcal H+\|v'(s)\|_\mathcal H
+
\gamma\left(\|u(s)\|_\mathcal V^\eta+\|v(s)\|_\mathcal V^\eta\right)
+
\|f(s)\|_\mathcal H+\|g(s)\|_\mathcal H\, ds
 < \infty
\]
with $\kappa=\kappa(\mathcal M')$ from Assumption~\ref{property:curvature}.
\end{theorem}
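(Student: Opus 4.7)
The plan is to derive a Gronwall-type differential inequality for $\|u(t)-v(t)\|_\mathcal H^2$. The obstruction is that $u-v$ in general lies in neither $T_{u(t)}\mathcal M$ nor $T_{v(t)}\mathcal M$, so it cannot be plugged into either weak formulation as a test function. We circumvent this by testing the equation for $u$ with $\varphi_1 \coloneqq P_{u(t)}(u-v)$ and the equation for $v$ with $\varphi_2 \coloneqq P_{v(t)}(u-v)$, both admissible by Assumption~\ref{property:compat2}, and by tracking the resulting orthogonal-complement remainders via the quadratic curvature bound in Assumption~\ref{property:curvature}.

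First, since $u,v \in W(0,T^*;\mathcal V,\mathcal V^*)$ one has $\tfrac12\tfrac{d}{dt}\|u-v\|_\mathcal H^2 = \langle u'-v',u-v\rangle$ almost everywhere. I would decompose this as
\[
 \langle u'-v',u-v\rangle = \bigl[\langle u',\varphi_1\rangle - \langle v',\varphi_2\rangle\bigr] + \langle u',(I-P_u)(u-v)\rangle_\mathcal H - \langle v',(I-P_v)(u-v)\rangle_\mathcal H,
\]
estimating the last two terms by $\kappa(\|u'\|_\mathcal H+\|v'\|_\mathcal H)\|u-v\|_\mathcal H^2$ using the quadratic estimate in~\ref{property:curvature}. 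Into the bracketed term I substitute the weak formulations to replace $\langle u',\varphi_1\rangle$ by $-a(u,\varphi_1;t)+\langle f,\varphi_1\rangle$ and similarly for $v$.

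Next, writing $\varphi_1 = (u-v) - (I-P_u)(u-v)$ and symmetrically for $\varphi_2$, the bilinear-form contribution rearranges as $-a(u-v,u-v;t) + a(u,(I-P_u)(u-v);t) - a(v,(I-P_v)(u-v);t)$. Coercivity disposes of the first (non-positive) piece; for the remaining two I invoke the splitting of Assumption~\ref{property:splitting}: by~\ref{property:partA1} the $A_1$-contribution vanishes, leaving $\langle A_2(t)u,(I-P_u)(u-v)\rangle_\mathcal H$, which Cauchy--Schwarz together with~\ref{property:partA2} and~\ref{property:curvature} bounds by $\kappa\gamma\|u\|_\mathcal V^\eta\|u-v\|_\mathcal H^2$ (and analogously at $v$). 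An identical decomposition of $\langle f,\varphi_1\rangle - \langle g,\varphi_2\rangle$ isolates the data term $\langle f-g,u-v\rangle_\mathcal H$, which Young's inequality with parameter $\varepsilon$ splits into $\tfrac{1}{2\varepsilon}\|f-g\|_\mathcal H^2 + \tfrac{\varepsilon}{2}\|u-v\|_\mathcal H^2$, plus two orthogonal-complement remainders bounded by $\kappa(\|f\|_\mathcal H+\|g\|_\mathcal H)\|u-v\|_\mathcal H^2$. Collecting everything and multiplying by two produces a linear differential inequality whose coefficient of $\|u-v\|_\mathcal H^2$ matches the integrand of $\Lambda(t)+\varepsilon t$; Gronwall's inequality then yields the stated bound.

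The main obstacle is arranging the decomposition so that \emph{every} remainder produced by exchanging $u-v$ for its admissible projections is quadratic in $\|u-v\|_\mathcal H$; only then does Gronwall close with a useful rate. This hinges on the second (quadratic) inequality in~\ref{property:curvature} and on~\ref{property:partA1}, which is what lets us annihilate the $\mathcal V^*$-valued contribution of $A_1(t)u$ when paired against an $\mathcal H$-element of the orthogonal complement of the tangent space. Finiteness of $\Lambda(t)$ then follows from the assumed $u,v \in L_\eta(0,T^*;\mathcal V)$ together with $u',v' \in L_2(0,T^*;\mathcal H)$ and $f,g \in L_2(0,T;\mathcal H)$, while a uniform curvature constant $\kappa = \kappa(\mathcal M')$ is available because the trajectories remain in the weakly compact subset~$\mathcal M'$.
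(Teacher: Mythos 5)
Your proposal is correct and follows essentially the same route as the paper's proof: test each equation with the projection of $u-v$ onto the respective tangent space, control the orthogonal-complement remainders of $u'$, $A_2u$, and $f$ via the quadratic curvature bound in Assumption~\ref{property:curvature} together with~\ref{property:partA1} and~\ref{property:partA2}, absorb the data difference by Young's inequality, drop $-a(u-v,u-v;t)$ by coercivity, and close with Gr\"onwall. The paper merely packages the same decomposition more compactly by adding and subtracting the full residuals $u'+A(t)u-f$ and $v'+A(t)v-g$ paired with $(\id-P_{u(t)})(u-v)$ and $(\id-P_{v(t)})(u-v)$.
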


\begin{proof}
We use integration by parts in the sense of~\cite[\pr 23.23(iv)]{Zeidler90a}. This results in
\[
\frac{1}{2}\frac{d}{dt}\|u(t)-v(t)\|_\mathcal H^2
\leq
\langle u'(t)-v'(t)+A(t)(u(t)-v(t))-f(t)+g(t)+f(t)-g(t),u(t)-v(t)\rangle
\]
for almost all $t\in[0,T^*]$ 
by coercivity of $A(t)$ and adding and subtracting $\langle f(t)-g(t),u(t)-v(t)\rangle$. We add and subtract~\eqref{eq:prob1} for the solutions $u$ and $v$ with $w=P_{v(t)}(u(t)-v(t))$ and $w=P_{u(t)}(u(t)-v(t))$, respectively. This results in
\begin{multline*}
    \frac{1}{2}\frac{d}{dt}\|u(t)-v(t)\|_\mathcal H^2\\
\leq
\langle f(t)-g(t),u(t)-v(t)\rangle
+
\langle u'(t)+A(t)u(t)-f(t),(\id-P_{u(t)})(u(t)-v(t))\rangle\\
-\langle v'(t)+A(t)v(t)-g(t),(\id-P_{v(t)})(u(t)-v(t))\rangle.
\end{multline*}
We use Young's inequality to estimate
\[
\langle f(t)-g(t),u(t)-v(t)\rangle\leq \frac{1}{2\varepsilon}\|f(t)-g(t)\|_\mathcal H^2 +\frac{\varepsilon}{2}\|u(t)-v(t)\|_\mathcal H^2
\]
and Assumption~\ref{property:splitting} to get
\begin{multline*}
    \frac{1}{2}\frac{d}{dt}\|u(t)-v(t)\|_\mathcal H^2\\
\leq
\left(
\|u'(t)\|_\mathcal H+\gamma\|u(t)\|_\mathcal V^\eta+\|f(t)\|_\mathcal H
\right)
\|(\id-P_{u(t)})(u(t)-v(t))\|_\mathcal H\\
\quad
+
\left(
\|v'(t)\|_\mathcal H+\gamma\|v(t)\|_\mathcal V^\eta+\|g(t)\|_\mathcal H
\right)
\|(\id-P_{v(t)})(u(t)-v(t))\|_\mathcal H\\
+\frac{1}{2\varepsilon}\|f(t)-g(t)\|_\mathcal H^2 +\frac{\varepsilon}{2}\|u(t)-v(t)\|_\mathcal H^2.
\end{multline*}
Finally, Assumption~\ref{property:curvature} implies
\begin{multline*}
\frac{d}{dt}\|u(t)-v(t)\|_\mathcal H^2
\leq
\biggl(2\kappa\Bigl(
\|u'(t)\|_\mathcal H+\|v'(t)\|_\mathcal H+\gamma\bigl(\|u(t)\|_\mathcal V^\eta+\|u(t)\|_\mathcal V^\eta\bigr)
\\  +
\|f(t)\|_\mathcal H+\|g(t)\|_\mathcal H
\Bigr) +\varepsilon\biggr)
\|u(t)-v(t)\|^2_\mathcal H
+\frac{1}{\varepsilon}\|f(t)-g(t)\|_\mathcal H^2
\end{multline*}
and the result follows from Gr\"onwall's lemma; see for example~\cite[\lem 2.7]{Teschl12}. 
Here we take into account that $L_2(0,T^*;\mathcal H)\subset L_1(0,T^*;\mathcal H)$.
\end{proof}

\section{Convergence of spatial discretizations}\label{sec:spatialdiscrete}

From the perspective of numerical analysis, an important question is whether the unique solution of \Cref{problem 1} can be obtained as the limit of solutions of spatially discretized problems. We now provide such a result under  assumptions on the compatibility of the discrete spaces $\mathcal V_h\subset \mathcal V$ with $\mathcal M$. 
The spatially discretized problems are of the following form.

\begin{problem}
\label{problem discrete}
Given $f\in L_2(0,T;\mathcal H)$ and
$u_{0,h} \in \mathcal M \cap \mathcal V_h$, find
$ u_h \in W(0,T;\mathcal V,\mathcal H) $
such that for almost all $t \in [0,T]$,
\begin{equation}\label{eq:prob_discr}
  \begin{aligned}
  u_h(t)  &\in \mathcal M\cap \mathcal V_h,\\
    \langle u_h'(t), v_h\rangle + a(u_h(t),v_h;t) &= \langle f(t), v_h \rangle \quad \text{for all $v_h \in T_{u_h(t)} \mathcal M \cap \mathcal V_h$}, \\
    u_h(0) &= u_{h,0}.
  \end{aligned}
\end{equation}
\end{problem}

We require that the discrete subspaces $\mathcal V_h\subset \mathcal V$ have the following properties.

\begin{enumerate}[label=\bf{B\arabic*},itemsep=5pt]
\item \label{property:discr_approximation} (Approximation property) 
\begin{enumerate}[ref={\bf{B1}}(\alph*)]
  \item \label{property:discr_approximation1} 
  For every $v \in \mathcal V$, the $\mathcal V$-orthogonal projections $v_h \in \mathcal V_h$ satisfy $\| v - v_h \|_{\mathcal V} \to 0$ as $h \searrow 0$.
\item \label{property:discr_approximation2} For every $u\in\mathcal M\cap\mathcal V$ there is a sequence $(u_h)$ with $u_h\in\mathcal M\cap\mathcal V_h$ such that $u_h$ converges to $u$ in $\mathcal V$ as $h\searrow 0$ and $\|u_h\|_\mathcal V \leq \|u\|_\mathcal V$. 
\end{enumerate}

\item \label{property:discr_compat} (Compatibility of tangent spaces)
\begin{enumerate}[ref={\bf{B2}}(\alph*)]
    \item\label{property:discr_compat1}
 For $u_h \in \mathcal M \cap \mathcal V_h$ and $v_h \in T_u \mathcal M \cap \mathcal V_h$ a continuously differentiable curve with $\varphi(0) = u_h$, $\varphi'(0) = v_h$ can be chosen such that
\[                     
\varphi(t) \in \mathcal M \cap \mathcal V_h
\]
for all $\abs{t}$ small enough.
\item\label{property:discr_compat2}
  If $u_h \in \mathcal M \cap \mathcal V_h$ and $v_h \in \mathcal V_h$ then $P_{u_h} v_h \in T_{u_h} \mathcal M \cap \mathcal V_h$. 
\end{enumerate}
\end{enumerate}

The model problem \eqref{eq:modelproblem2} allows for such a space discretization, as verified in \Cref{sec:assumptionsBmodel} for tensor train manifolds. The following result was obtained in \cite{Eisenmann}.

\begin{theorem}\label{thm:convergence_space-discr}
Let the Assumptions~\ref{property:regularity}--\ref{property:splitting} and~\ref{property:discr_approximation}--\ref{property:discr_compat} hold. Let $u_{0,h}\in \mathcal M \cap \mathcal V_h$ define a sequence that converges to $u_0$ in $\mathcal V$ as $h \searrow 0$ and let $u_0$ have positive $\mathcal H$-distance $\sigma$ to the relative boundary $\overline{\mathcal M}^\mathsf w\setminus \mathcal M$.
Then there exists a constant $c > 0$ independent of $\sigma$ and a constant $h_0>0$ such that there is a unique  
$u_h$ in $W(0,T^*;\mathcal V,\mathcal H)\cap L_\eta(0,T^*;\mathcal V)$
that solves \Cref{problem discrete} on the time interval $[0,T^*]$ when $T^* < \sigma^2/c$ for all $h\leq h_0$. Furthermore, $u_h$ converges to the unique solution $u$ of \Cref{problem 1} in $W(0,T^*;\mathcal V,\mathcal H)\cap L_\eta(0,T^*;\mathcal V)$ weakly in $L_2(0,T^*;\mathcal V)$ and strongly in $C(0,T^*;\mathcal H)$, while the weak derivatives $u_h'$ converge weakly to $u'$ in $L_2(0,T^*,\mathcal H)$.
\end{theorem}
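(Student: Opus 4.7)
The plan is to obtain the discrete solutions $u_h$ by applying \Cref{thm:existandunique} at each fixed $h$ with constants uniform in $h$, establish $h$-uniform a priori bounds, extract a weakly convergent subsequence whose limit will be identified with the unique solution of \Cref{problem 1}, and finally upgrade subsequential convergence to full-sequence convergence by uniqueness.

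First, \Cref{thm:existandunique} must be verified for \Cref{problem discrete}. Assumptions \ref{property:cone} and \ref{property:curvature} are inherited verbatim because $\mathcal M\cap\mathcal V_h\subset\mathcal M$ and the projector $P_u$ is unchanged; the compatibility assumption \ref{property:compat} is replaced in the discrete setting by \ref{property:discr_compat}; and the splitting \ref{property:splitting} transfers because $A_1(t),A_2(t)$ are defined on all of $\mathcal V$. In particular the constants $\beta,\mu,L,\kappa,\gamma$ and the exponent $\eta$ are the ones from the continuous problem and do not depend on $h$. Since $u_{0,h}\to u_0$ in $\mathcal V$ and hence in $\mathcal H$, for $h\le h_0$ sufficiently small we have $\|u_{0,h}\|_{\mathcal V}\le\|u_0\|_{\mathcal V}+1$ and $\dist_{\mathcal H}(u_{0,h},\overline{\mathcal M}^w\setminus\mathcal M)\ge\sigma/2$. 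The quantitative part of \Cref{thm:existandunique} then produces a common $T^*\ge\sigma^2/(4c)$ on which $u_h$ exists and is unique in $W(0,T^*;\mathcal V,\mathcal H)\cap L_\eta(0,T^*;\mathcal V)$, together with the energy bounds \eqref{eq:estimate_u}--\eqref{eq:estimate_u_infty} having $h$-independent constants.

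Next, Banach--Alaoglu applied to the uniform bounds gives a subsequence (not relabeled) with $u_h\wto u^\star$ in $L_2(0,T^*;\mathcal V)$, $u_h'\wto (u^\star)'$ in $L_2(0,T^*;\mathcal H)$, and $u_h\wto u^\star$ weakly-$\ast$ in $L_\infty(0,T^*;\mathcal V)$. The Aubin--Lions--Simon lemma, using the compact embedding $\mathcal V\hookrightarrow\mathcal H$, yields the compact inclusion $W(0,T^*;\mathcal V,\mathcal H)\hookrightarrow C(0,T^*;\mathcal H)$, so $u_h\to u^\star$ in $C(0,T^*;\mathcal H)$. By construction $\dist_\mathcal{H}(u_h(t),\overline{\mathcal M}^w\setminus\mathcal M)$ remains bounded below uniformly in $h$ and $t\in[0,T^*]$ (this is how $T^*$ was chosen), so the strong limit satisfies $u^\star(t)\in\mathcal M$ for all $t$, and the trajectories $\{u_h(t):h\le h_0, t\in[0,T^*]\}\cup\{u^\star(t):t\in[0,T^*]\}$ lie in a single weakly compact subset $\mathcal M'\subset\mathcal M$. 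Consequently the constants $\kappa(\mathcal M')$ and $\gamma(\mathcal M')$ from \ref{property:curvature} and \ref{property:partA2} apply uniformly to all of these functions.

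The main obstacle is the passage to the limit in \eqref{eq:prob_discr}, because the admissible test space $T_{u_h(t)}\mathcal M\cap\mathcal V_h$ depends both on $h$ and on the unknown. The strategy is a density argument: given $\chi\in C_c^\infty(0,T^*)$ and $v\in\mathcal V$, let $v_h\in\mathcal V_h$ be the $\mathcal V$-orthogonal projection of $v$, which by \ref{property:discr_approximation1} satisfies $v_h\to v$ in $\mathcal V$. By \ref{property:discr_compat2}, $P_{u_h(t)}v_h\in T_{u_h(t)}\mathcal M\cap\mathcal V_h$ is admissible in the discrete equation, and the curvature estimate in \ref{property:curvature} combined with boundedness of $P_{u^\star(t)}$ gives
\[
\|P_{u_h(t)}v_h-P_{u^\star(t)}v\|_\mathcal H\le\kappa\|u_h(t)-u^\star(t)\|_\mathcal H\|v_h\|_\mathcal H+\|v_h-v\|_\mathcal H,
\]
which tends to zero uniformly in $t$ by the $C(0,T^*;\mathcal H)$ convergence of $u_h$. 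Testing \eqref{eq:prob_discr} with $\chi(t)P_{u_h(t)}v_h$, integrating in $t$, and passing to the limit termwise (using weak convergence of $u_h'$ in $L_2(0,T^*;\mathcal H)$ and of $u_h$ in $L_2(0,T^*;\mathcal V)$ against strong convergence of the test functions, with the bilinear form term handled by its boundedness and the Lipschitz property \eqref{eq:a-lipschitz}) shows that $u^\star$ satisfies the continuous equation whenever the test function has the form $\chi(t)P_{u^\star(t)}v$. Since by \ref{property:compat2} every $w\in L_2(0,T^*;\mathcal V)$ with $w(t)\in T_{u^\star(t)}\mathcal M\cap\mathcal V$ is obtained as $w(t)=P_{u^\star(t)}w(t)$, and since such test functions can be approximated in $L_2(0,T^*;\mathcal V)$ by linear combinations of elementary products $\chi(t)v$, $u^\star$ solves \Cref{problem 1}. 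The uniqueness part of \Cref{thm:existandunique} forces $u^\star=u$, and a standard subsequence argument then upgrades convergence of subsequences to convergence of the full sequence in the stated topologies. The delicate point throughout is this construction of discretely admissible test functions by the projection $P_{u_h(t)}$ and the use of the quantitative curvature bound to justify the passage to the limit.
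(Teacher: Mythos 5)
Your overall architecture matches the paper's proof: uniform application of \Cref{thm:existandunique} on the discrete spaces, $h$-independent energy bounds, extraction of weak limits, Aubin--Lions for strong convergence in $C(0,T^*;\mathcal H)$, testing the discrete equation with $P_{u_h(t)}Q_h v$ (with $Q_h$ the $\mathcal V$-orthogonal projection), and identification of the limit by uniqueness plus a subsequence-of-subsequence argument. However, there is a genuine gap at the single most delicate step: the passage to the limit in the bilinear form term $\int_0^{T^*} a(u_h(t),P_{u_h(t)}Q_hv;t)\,dt$. You claim this is "handled by its boundedness and the Lipschitz property \eqref{eq:a-lipschitz}", but neither suffices. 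The difficulty is that $u_h\wto u$ only \emph{weakly} in $L_2(0,T^*;\mathcal V)$, while the test functions $P_{u_h(t)}Q_hv$ converge to $v$ only \emph{strongly in $\mathcal H$} — the curvature bound \ref{property:curvature} controls $\|P_{u_h(t)}-P_{u(t)}\|_{\mathcal H\to\mathcal H}$, not the $\mathcal V\to\mathcal V$ operator norm, so there is no strong $\mathcal V$-convergence of the test functions. A $\mathcal V\times\mathcal V$-bounded bilinear form applied to a weakly convergent sequence against a sequence converging only in the weaker space $\mathcal H$ does not pass to the limit, and the Lipschitz continuity in $t$ is irrelevant to this obstruction.

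This is precisely where the paper invokes the operator splitting Assumption~\ref{property:splitting}, which your proposal never uses at this point. Writing $a=\langle A_1\cdot,\cdot\rangle+\langle A_2\cdot,\cdot\rangle$, Assumption~\ref{property:partA1} lets one strip the tangent projection from the test function in the $A_1$ term, $\langle A_1(t)u_h(t),P_{u_h(t)}Q_hv\rangle=\langle A_1(t)u_h(t),Q_hv\rangle$, so that the test function $Q_hv$ \emph{does} converge strongly in $\mathcal V$ and weak convergence of $u_h$ in $L_2(0,T^*;\mathcal V)$ suffices; while Assumption~\ref{property:partA2} gives $\|A_2(t)u_h(t)\|_{\mathcal H}\le\gamma\|u_h(t)\|_{\mathcal V}^\eta$, so the $A_2$ term only requires the $\mathcal H$-convergence of $P_{u_h(t)}Q_hv$ that the curvature estimate actually provides. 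Without this splitting the limit passage fails, so you should incorporate it explicitly; the rest of your argument is sound and follows the paper's route.
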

\begin{proof}
Since $u_{0,h}$ converges to $u_{0}$ in $\mathcal V$, there is an $h_0>0$ such that $\|u_{0,h}-u_{0}\|_\mathcal V\leq \sigma/2$ and $\|u_{0,h}-u_{0}\|_\mathcal H\leq \sigma/2$ for all $h\leq h_0$ due to~\eqref{eq:norminequality}. Furthermore, we can choose $h_0$ small enough, such that $\|u_{0,h}\|_\mathcal V^2\leq 2\|u_{0}\|_\mathcal V^2$ and $\|u_{0,h}\|_\mathcal H^2\leq 2\|u_{0}\|_\mathcal H^2$. 
Therefore, the $\mathcal H$-distance of $u_{0,h}$ from $\overline{\mathcal M}^\mathsf w\setminus \mathcal M$ is at least $\sigma/2$. Hence, applying \Cref{thm:existandunique} with $\mathcal V_h$ in place of $\mathcal V$ provides us with solutions $u_h$ to \Cref{problem discrete} on a time interval $[0,T^*]$ with $T^*<\sigma^2/(4c)$ for every $h\leq h_0$, where $c$ can be chosen as the right-hand side of the following estimate~\eqref{eq:uhprimebound}. \Cref{thm:existandunique} provides us with the estimates 
\begin{align}
    \|u_h\|^2_{L_2(0,T^*;\mathcal V)}&\leq 2\|u_0\|_\mathcal H^2+C_1\|f\|^2_{L_2(0,T^*;\mathcal H)},
    \\
    \|u_h'\|^2_{L_2(0,T^*;\mathcal H)}&\leq C_2\left(2\|u_0\|_\mathcal V^2+\|f\|^2_{L_2(0,T^*;\mathcal H)}\right),
   \label{eq:uhprimebound}  \\
    \|u_h\|^2_{L_\infty(0,T^*;\mathcal V)}&\leq C_3\left(2\|u_0\|_\mathcal V^2+\|f\|^2_{L_2(0,T^*;\mathcal H)}\right).
\end{align}
Note that by \eqref{eq:uhprimebound}, we can assume that for $h$ sufficiently small, $\norm{u_h(t) - u_{0}}_{\mathcal H}\leq \sigma - \delta $ for a $\delta > 0$.  
As a consequence, there is a subsequence $(u_h)$ converging weakly to $\tilde u$ in $L_2(0,T^*;\mathcal V)$ and weakly$^*$ in $L_\infty(0,T^*;\mathcal V)$ and the derivatives $(u'_h)$ converging weakly to $\tilde w$ in $L_2(0,T^*;\mathcal H)$.

We next show that $\tilde w$ is the weak derivative of ${\tilde u}$. For this, we need to verify that
\[
  \int_0^{T^*} \langle \tilde w(t), v \rangle\, \phi(t)+\langle{\tilde u}(t), v\rangle\,\phi'(t)\,d t = 0
\]
for arbitrary $v \in \mathcal V$ and $\phi \in  C_0^\infty (0,T^*)$. 
For any $v_h\in\mathcal V_h$ we may add and subtract the weak derivative of ${u}_h$ to obtain
\begin{multline*}
  \int_{T^*} \langle \tilde w(t), v_h \rangle\,\phi(t)\,d t + \langle{\tilde u}(t), v_h\rangle\,\phi'(t)\,d t \\
  {}={} \int_{T^*} \langle \tilde w(t) - u_h'(t), v_h\rangle \,\phi(t)\,+ \langle{\tilde u}(t) - u_h(t), v_h\rangle\,\phi'(t)\,d t.
\end{multline*}
Now let $(v_h)$ be a sequence converging to $v$ in $\mathcal V$. Then
\begin{multline*}
 \int_{T^*} \langle \tilde w(t), v \rangle\,\phi(t)\,d t + \langle{\tilde u}(t), v\rangle\,\phi'(t)\,d t=
 \lim_{h\searrow 0}
 \int_{T^*} \langle \tilde w(t), v_h \rangle\,\phi(t)\,d t + \langle{\tilde u}(t), v_h\rangle\,\phi'(t)\,d t \\
=
\lim_{h\searrow 0}
\int_{T^*} \langle \tilde w(t) - u_h'(t), v_h\rangle \,\phi(t)\,+ \langle{\tilde u}(t) - u_h(t), v_h\rangle\,\phi'(t)\,d t=0
\end{multline*}
since $v_h\phi$ converges strongly to $v\phi$ in $L_2(0,T^*;\mathcal V)$.
Therefore, the sequence $(u_h)$ converges weakly in $W(0,T^*;\mathcal V,\mathcal H)$ to $\tilde u$. Due to the Aubin-Lions theorem, and by boundedness in $L_\infty(0,T^*;\mathcal V)$, it also converges strongly in $C(0,T^*;\mathcal H)$ to $\tilde u$, and $\tilde u(0)=\lim_{h\searrow 0}u_h(0)=\lim_{h\searrow 0}u_{0,h}=u_0$. 

It remains to show that $\tilde u$ satisfies~\eqref{eq:prob1} and therefore agrees with the unique solution $u$ of \Cref{problem 1} in $W(0,T^*;\mathcal V,\mathcal H)\cap L_\eta(0,T^*;\mathcal V)$ provided by Theorem \ref{thm:existandunique}.
By a subsequence-of-subsequence argument, it then follows  that the entire sequence converges to $\tilde u$. 
Let $Q_h$ be the $\mathcal{V}$-orthogonal projection onto $\mathcal{V}_h$.
For $v\in T_{\tilde u(t)}\mathcal M\cap\mathcal V$, let $v_h = Q_h v$, which converges strongly to $v$ in $\mathcal V$ by Property~\ref{property:discr_approximation1}. This also implies that the sequence is uniformly bounded in $\mathcal H$.
By \eqref{eq:prob_discr}, we have 
\[
\langle u_h'(t), P_{u_h(t)}v_h\rangle + a(u_h(t),P_{u_h(t)}v_h;t) = \langle f(t), P_{u_h(t)}v_h \rangle
\]
for almost every $t$, since $P_{u_h(t)}v_h\in T_{u_h(t)} \mathcal M\cap \mathcal V_h$ by Property~\ref{property:discr_compat2}. We have chosen the time interval such that $u_h(t)\in\mathcal M'\subset \mathcal M$ lie in a weakly compact subset for all $t\in[0,T^*]$. Hence, using Assumption \ref{property:curvature},
\begin{multline}\label{eq:convergenceoftestfunction}
\|v-P_{u_h(t)}v_h\|_\mathcal H
\leq 
\|v-P_{u_h(t)}v\|_\mathcal H+\|P_{u_h(t)}(v-v_h)\|_\mathcal H\\
\leq 
\kappa(\mathcal M')\|\tilde u(t)-u_h(t)\|_\mathcal H\|v\|_\mathcal H+\|(v-v_h)\|_\mathcal H,
\end{multline}
and thus $P_{u_h(t)}v_h$ converges strongly to $v$ in $\mathcal H$. Using a similar argument as in the proof of 
\Cref{thm:existandunique} in~\cite{Bachmayr21},
it suffices to show 
\[
\int_0^{T^*}\langle \tilde u'(t), v(t)\rangle + a(\tilde u(t),v(t);t) - \langle f(t), v(t) \rangle \, d t=0
\]
for all $v\in L_\infty(0,T^*;\mathcal V)$ with $v(t)\in T_{\tilde u(t)}\mathcal M\cap\mathcal V$ for almost every $t$.

Since $P_{u_h(t)}Q_hv(t)$ converges to $v(t)$ in $\mathcal H$ for almost all $t\in[0,T^*]$, and we have the square integrable bound~\eqref{eq:convergenceoftestfunction},
the sequence $P_{u_h(t)}Q_hv(t)$ converges strongly to $v$ in $L_2(0,T^*;\mathcal H)$. This together with weak convergence of $(u'_h)$ in $L_2(0,T^*;\mathcal H)$ implies 
\[
\lim_{h\searrow 0} 
\int_0^{T^*}\!
\langle u_h'(t), P_{u_h(t)}Q_hv(t)\rangle - \langle f(t), P_{u_h(t)}Q_hv(t)\rangle
\, d t
=\int_0^{T^*}\!\langle \tilde u'(t), v(t)\rangle - \langle f(t), v(t) \rangle \, d t.
\]
Finally, we use Assumption~\ref{property:splitting}. We have
\begin{multline}\label{eq:A1A2split}
 a(u_h(t),P_{u_h(t)}Q_hv(t);t)-
a(\tilde u(t),v(t);t)  \\
= 
 \langle A_1(t)u_h(t),P_{u_h(t)}Q_hv(t)\rangle -
\langle A_1(t)\tilde u(t),v(t)\rangle  \\
+
 \langle A_2(t)u_h(t),P_{u_h(t)}Q_hv(t)\rangle -
\langle A_2(t)\tilde u(t),v(t)\rangle 
\end{multline}
and due to Assumption~\ref{property:partA1}
\[
\langle A_1(t)u_h(t),P_{u_h(t)}Q_hv(t)\rangle=\langle A_1(t)u_h(t),Q_hv(t)\rangle. 
\]
This implies
\[
\lim_{h\searrow 0}\int_0^{T^*}\abs{ \langle A_1(t)u_h(t),P_{u_h(t)}Q_hv(t)\rangle -
\langle A_1(t)u(t),v(t)\rangle } \, d t =0
\]
as $u_h$ converges weakly to $\tilde u$  and $Q_hv$ converges strongly to $v$ in $L_2(0,T^*;\mathcal V)$. For the second summand in \eqref{eq:A1A2split} we have
\begin{multline*}
\langle A_2(t)u_h(t),P_{u_h(t)}Q_hv(t)\rangle -
\langle A_2(t)\tilde u(t),v(t)\rangle  \\
= 
 \langle A_2(t)u_h(t),P_{u_h(t)}Q_hv(t)-v(t)\rangle
+
\langle A_2(t)(\tilde u(t)-u_h(t)),v(t)\rangle 
\end{multline*}
where 
\[ 
  \abs{\langle A_2(t)u_h(t),P_{u_h(t)}Q_hv(t)-v(t)\rangle} \leq 
\gamma \|u_h(t)\|_\mathcal V^\eta\|P_{u_h(t)}Q_hv(t)-v(t)\|_\mathcal H  
\]
and $\int_0^{T^*} \|u_h(t)\|_\mathcal V^\eta\|P_{u_h(t)}Q_hv(t)-v(t)\|_\mathcal H  \,d t \to 0$.
Moreover, 
\[ \int_0^{T^*}
\langle A_2(t)(\tilde u(t)-u_h(t)),v(t)\rangle \,d t\to 0 \quad\text{ as $h\searrow 0$,} \] 
since $u_h$ converges weakly to $\tilde u$  in $L_2(0,T^*;\mathcal V)$. Taken together with~\eqref{eq:convergenceoftestfunction} and the uniform bound of $u_h$ in $L_\infty(0,T^*;\mathcal V)$, we have 
\[
\int_0^{T^*} a(u_h(t),P_{u_h(t)}Q_hv(t);t)-
a(\tilde u(t),v(t);t) \,d t\to 0\quad \text{as $h\searrow 0$}
\]
and hence
\begin{multline*}
    \int_0^{T^*}\langle \tilde u'(t), v(t)\rangle + a(\tilde u(t),v(t);t) - \langle f(t), v(t) \rangle \, d t\\
    =
    \lim_{h\searrow 0}
    \int_0^{T^*} \langle u_h'(t), P_{u_h(t)}v_h\rangle + a(u_h(t),P_{u_h(t)}v_h;t) -\langle f(t), P_{u_h(t)}v_h \rangle\, d t
    =
    0
\end{multline*}
for all $v\in L_\infty(0,T^*;\mathcal V)$ with $v(t)\in T_{\tilde u(t)}\mathcal M\cap\mathcal V$ for almost every $t$. 
\end{proof}

\section{Properties of low-rank tensor manifolds in Hilbert space}\label{sec:tensor train model}

In this section we return to our model problem~\eqref{eq:modelproblem1} in its weak formulation~\eqref{eq:modelproblem2} and apply the theory developed above for low-rank models of multivariate functions. In the model problem $\mathcal H=L_2(\Omega)$ and $\mathcal V=H^1_{0}(\Omega)$, the compact embedding $\mathcal V \hookrightarrow \mathcal H$ is due to the Rellich-Kondrachov theorem and~\eqref{eq:norminequality} is the Poincar\'e inequality.

\subsection{Low-rank tensor manifolds in function space}

Let $\Omega = \Omega_1\times\cdots\times\Omega_d$, where  $\Omega_\mu$ is a bounded domain in a Euclidean space for $\mu=1,\ldots,d$. We write $H^\mu = L_2(\Omega_\mu)$ for abbreviation. The space $\mathcal H = L_2(\Omega_1 \times \dots \times \Omega_d) = H^1 \otimes \dots \otimes H^d$ is a tensor product Hilbert space. In DLRA, one considers low-rank manifolds $\mathcal M$ in such spaces. We consider manifolds of the  general form
\begin{align}
\mathcal M = \Big\{ u = \sum_{k_1 = 1}^{r_1} \cdots \sum_{k_d = 1}^{r_d} C(k_1,\dots,k_d)& \, u^1_{k_1} \otimes \dots \otimes u^d_{k_d} \colon \label{eq: Tucker format} \\ 
& C \in \mathcal{M}_\mathrm{c} \subset \R^{r_1 \times \dots \times r_d}_*,
\text{$G(u^\mu)$ is invertible}
\Big\}. \notag
\end{align}
Here $\R^{r_1 \times \dots \times r_d}_*$ denotes the dense and open subset of ``regular'' $r_1\times \dots \times r_d$ tensors with full multilinear rank $(r_1,\dots,r_d)$, and $G(u^\mu)= [\langle u^\mu_i, u^\mu_j \rangle]_{ij} \in \R^{r_\mu \times r_\mu}$ is the Gramian of the system $\{u^\mu_1,\dots,u^\mu_{r_\mu} \}$. We assume that $\mathcal{M}_\mathrm{c}$ is a smooth submanifold in $\R^{r_1 \times \dots \times r_d}_*$ that we additionally assume to be invariant under changes of basis in the sense that for all $C \in \mathcal{M}_\mathrm{c}$,
\begin{equation}\label{eq: invariance Mhat}
C \times_1 T^1 \times_2 \dots \times_d T^d \in \mathcal{M}_\mathrm{c}
\quad\text{for all invertible matrices $T^1,\dots,T^d$.}
\end{equation}
Here we use the notation $\times_\mu$ for left multiplication of a matrix onto the $\mu$-th mode a tensor ~\cite{KoldaBader}.

The definition of $\mathcal M$ results in a constrained version of the Tucker format (for which $\mathcal{M}_\mathrm{c} = \R^{r_1 \times \dots \times r_d}_*$), and covers continuous versions of general tree based tensor formats, for example by letting $\mathcal{M}_\mathrm{c}$ be a corresponding finite-dimensional low-rank tensor manifold in $\R^{r_1 \times \dots \times r_d}$, such as manifolds of tensor trains \cite{Holtz12,UschmajewVandereycken:20} or of hierarchical Tucker tensors \cite{UschmajewVandereycken:13,Bachmayr16} with fixed ranks.

Note that it follows from the assumed properties of $\mathcal M_{\mathrm c}$, that this set is not closed in $\R^{r_1 \times \dots \times r_d}$, and hence $\overline{\mathcal M_{\mathrm c}} \setminus \mathcal M_{\mathrm c} \neq \emptyset$. To see this, let $C \in \mathcal M_{\mathrm c}$ and $T^1_n,\dots,T^d_n$ be invertible matrices converging to $T_*^1,\dots,T_*^d$ such that at least one of the limits is not invertible. Then $C \times_1 T^1_n \times_2 \dots \times_d T^d_n \in \mathcal M_{\mathrm c}$ for all $n$ by~\eqref{eq: invariance Mhat}, but the limit $C \times_1 T^1_* \times_2 \dots \times_d T^d_*$ does not have full multilinear rank, and hence is not in $\mathcal M_{\mathrm c}$. This example also shows that the set $\mathcal M$ is not closed in $\mathcal H$, and in particular not weakly closed.
Moreover, in \Cref{lm: closureManifold} we will see that the closure and weak closure of $\mathcal M$ coincide with the set described in~\eqref{eq: Tucker format} with~$\mathcal M_\mathrm c$ replaced by~$\overline{\mathcal M_\mathrm c}$.

For investigating the manifold properties of $\mathcal M$, the concepts of matricizations and minimal subspaces play a crucial role. For every $\mu=1,\dots,d$, we can identify $u$ with an element $M_u^\mu$, called the $\mu$-th matricization of $u$, in the subspace $H^\mu \otimes H^{\neq \mu}$, where $H^{\neq \mu} = \bigotimes_{\nu \neq \mu} H^\nu$. Assuming $u \in \mathcal M$ as above and letting
\begin{equation}\label{eq: definition of v_k}
v_{k_\mu}^\mu = \sum_{k_1 =1}^{r_1} \cdots \sum_{k_{\mu-1} =1}^{r_{\mu-1}} \sum_{k_{\mu+1} =1}^{r_{\mu+1}} \cdots \sum_{k_d =1}^{r_d} C(k_1,\dots,k_d) \, u^1_{k_1} \otimes \dots \otimes u^{\mu-1}_{k_{\mu-1}} \otimes u^{\mu+1}_{k_{\mu+1}} \otimes \dots \otimes u^d_{k_d},
\end{equation}
one has
\begin{equation}\label{eq: representation T_u}
M_u^\mu = \sum_{k_\mu=1}^{r_\mu} u_{k_\mu}^\mu \otimes v_{k_\mu}^\mu.
\end{equation}
Since the core tensor $C$ has full multilinear rank by~\eqref{eq: Tucker format}, one can show that the $v_{k_\mu}^\mu$ are also linearly independent. Now define
\[
\mathcal U^\mu = \spn\{u^\mu_{1},\dots,u^\mu_{r_\mu} \}
\]
and
\[
\mathcal V^\mu = \spn\{ v^\mu_1,\dots,v^\mu_{r_\mu} \},
\]
then~\eqref{eq: representation T_u} expresses the fact, that $M_u^\mu$ is an element of the ``matrix subspace'' $\mathcal U^\mu \otimes \mathcal V^\mu$ and $\rank(M_u^\mu) = r_\mu$. We call $\mathcal U^\mu$ the $\mu$-th minimal subspace of $u \in \mathcal M$.

Choosing an orthonormal basis for each space $H^\mu = L_2(\Omega_\mu)$, we obtain an isomorphism between $H^\mu$ and $\ell_2(\N)$ for each $\mu$. This in turn defines a tensor space isomorphism between $\mathcal{H}$ and $\ell_2(\mathbb N) \otimes \dots \otimes \ell_2(\mathbb N)$. 
In what follows, in order to use a more common matrix and tensor notation, we can thus assume without loss of generality that
\[
  \mathcal H = \ell_2(\mathbb N^d) = \ell_2(\mathbb N) \otimes \dots \otimes \ell_2(\mathbb N),
\]
and thus consider $\mathcal M$ as a set in the tensor product Hilbert space of square summable infinite arrays. The definition of $\mathcal M$ remains the same as in~\eqref{eq: Tucker format}, only that now $u^\mu_{k_\mu} \in \ell_2(\mathbb N)$. 

We will, however, denote the elements of $\ell_2(\mathbb N^d)$ as $X$ instead of $u$, in order to clearly distinguish these sequences from functions. The corresponding matricizations are $M^\mu_X \in \ell_2(\mathbb N) \otimes \ell_2(\mathbb N^{d-1})$. The Tucker format~\eqref{eq: Tucker format} can then be written in the usual abbreviated form
\[
X = C \times_1 U^1 \times_2 \cdots \times_d U^d
\]
where $U^\mu = [u_1^\mu,\dots,u^\mu_{r_\mu}] \in (\ell_2(\mathbb N))^{r_\mu}$ contains a basis for $\mathcal U^\mu$. Here the multiplications $\times_\mu$ are defined as for finite tensors. 

\subsubsection{Manifold structure}

Using the concept of manifolds in Banach space as presented in~\cite[Ch.~73]{Zeidler88} we can prove the following result.

\begin{theorem}\label{thm: manifold}
Let $X = C \times_1 U^1 \times_2 \cdots \times_d U^d$ be in $\mathcal M$ defined as in \eqref{eq: Tucker format} satisfying \eqref{eq: invariance Mhat}. Then the following statements hold.

\begin{enumerate}[label={\upshape (\roman*)},leftmargin=*,itemsep = 10pt]
\item
There exists an open neighborhood $\mathcal O$ of $X$ and a submersion $g$ defined on $\mathcal O$ such that $\mathcal M \cap \mathcal O = g^{-1}(0)$. Consequently,  $\mathcal M \cap \mathcal O$ is a smooth submanifold in the Hilbert space $\mathcal H$. The tangent space $T_{X} \mathcal M$ at $X \in \mathcal M \cap \mathcal O$ is the null space of $g'(X)$.
\item
There exists a continuously Fr\'echet-differentiable homeomorphism $\varphi$ from a neighborhood of zero in $T_{X_*} \Mc$ to $\mathcal M \cap \mathcal O$ satisfying $\varphi(\xi) = X_* + \xi + o(\| \xi \|_\mathcal{H})$ for all $\xi$ in that neighborhood. Moreover, $\varphi$ is also an immersion and hence a local embedding for~$\mathcal M$.
\item
The tangent space equals the subspace spanned by elements of the form
\begin{equation}\label{eq: tangent vectors}
\xi = \dot{C} \times_1 U^1 \times_2 \dots \times_d U^d + C \times_1 \dot{U}^1 \times_2 \dots \times_d U^d + \dots + C \times_1 U^1 \times_2 \dots \times_d \dot{U}^d
\end{equation}
with $\dot C \in T_C \mathcal{M}_\mathrm{c}$ and $(U^\mu)^\T \dot U^\mu = 0_{r_\mu \times r_\mu}$ for $\mu = 1,\dots,d$ (that is, the columns of $\dot U^\mu$ span a subspace orthogonal to $\mathcal U^\mu$).
\end{enumerate}
\end{theorem}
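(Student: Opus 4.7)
The plan is to build a local parametrization of $\mathcal M$ near a given point $X_* = C\times_1 U^1\times_2\cdots\times_d U^d$ by fixing a gauge for the nonuniqueness in the representation~\eqref{eq: Tucker format}. Using the invariance~\eqref{eq: invariance Mhat}, I may assume without loss of generality that each $U^\mu$ has $\ell_2$-orthonormal columns spanning the finite-dimensional minimal subspace $\mathcal U^\mu$. I then define
\[
\varphi(\dot C,\dot U^1,\ldots,\dot U^d)=(C+\dot C)\times_1(U^1+\dot U^1)\times_2\cdots\times_d(U^d+\dot U^d)
\]
on a neighborhood of $0$ in the Hilbert space $E = T_C\mathcal M_{\mathrm c}\times\prod_\mu \{\dot U^\mu \in (\ell_2(\N))^{r_\mu} : (U^\mu)^\T \dot U^\mu = 0\}$. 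Its image lies in $\mathcal M$ by~\eqref{eq: invariance Mhat}. The Fr\'echet derivative at the origin is exactly the linear map appearing in~\eqref{eq: tangent vectors}, and I would verify its injectivity by matricizing: the $\mu$-matricization of the $\mu$-th summand lies in $(\mathcal U^\mu)^\perp\otimes\mathcal V^\mu$, the remaining summands lie in $\mathcal U^\mu\otimes\ell_2(\N^{d-1})$, and the $v^\mu_{k_\mu}$ defined in~\eqref{eq: definition of v_k} are linearly independent due to the full multilinear rank of $C$.

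For (i) and the submanifold/submersion structure, I would apply the Banach space inverse function theorem (e.g.~\cite[Ch.~73]{Zeidler88}) to the auxiliary map
\[
\Psi(\dot C,\dot U^1,\ldots,\dot U^d, n)=\varphi(\dot C,\dot U^1,\ldots,\dot U^d)+n,
\]
where $n$ ranges over the orthogonal complement $N$ of the range of $D\varphi(0)$ in $\mathcal H$; closedness of this range follows from the matricization argument above, since each summand lies in a closed mutually transversal subspace. By construction $D\Psi(0)$ is a continuous bijection $E\oplus N\to\mathcal H$ and hence an isomorphism by the open mapping theorem, so $\Psi$ is a local diffeomorphism onto an open neighborhood $\mathcal O$ of $X_*$. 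Composing $\Psi^{-1}$ with the projection onto $N$ gives a submersion $g:\mathcal O\to N$ whose zero set is the image of $\varphi$. Restricting $\Psi^{-1}$ to this image gives the Fr\'echet-differentiable homeomorphism of~(ii), while (iii) is the identification $\ker g'(X_*)=\range D\varphi(0)$ already computed.

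The main obstacle is to verify that $\varphi$ is \emph{surjective} onto $\mathcal M\cap\mathcal O$, so that $g^{-1}(0)$ really equals $\mathcal M\cap\mathcal O$ and not a strictly larger set. This reduces to showing that the minimal subspaces $\mathcal U^\mu(Y)$ depend continuously on $Y\in\mathcal M$ in the low-rank regime. Since the $\mu$-matricization is an isometric isomorphism $\mathcal H\to\ell_2(\N)\otimes\ell_2(\N^{d-1})$ and $M_{X_*}^\mu$ has rank exactly $r_\mu$ with a strictly positive smallest nonzero singular value, standard SVD perturbation theory yields that the top-$r_\mu$ left singular subspace of $M_Y^\mu$ is close to $\mathcal U^\mu$ for $Y$ near $X_*$ in $\mathcal H$. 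Projecting an orthonormal basis of $\mathcal U^\mu(Y)$ onto $U^\mu$ plus its orthogonal complement produces factors $W^\mu=\dot U^\mu$ satisfying the gauge condition $(U^\mu)^\T \dot U^\mu=0$; the associated core $C'$ lies in $\mathcal M_{\mathrm c}$ via~\eqref{eq: invariance Mhat} applied to the change-of-basis matrices between these perturbed factors and any a priori representation of $Y$. Combining this surjectivity with the inverse function theorem construction above then completes all three claims.
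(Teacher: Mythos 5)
Your route is genuinely different from the paper's: you build a local parametrization first and recover the submersion $g$ from the inverse function theorem applied to the augmented map $\Psi$, whereas the paper constructs $g$ directly (a submersion $\phi$ for $\mathcal{M}_\mathrm{c}$ applied to a normalized core, plus Schur-complement functions encoding the matricization rank constraints) and obtains the parametrization afterwards from Ljusternik's theorem. That dual strategy is legitimate in principle, and your injectivity argument for $D\varphi(0)$ via matricizations and the linear independence of the $v^\mu_{k_\mu}$, as well as the surjectivity argument via perturbation of the minimal subspaces, are essentially sound (the summands of the range are mutually \emph{orthogonal}, not merely transversal, which is what actually gives closedness of the range in infinite dimensions).

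However, there is a genuine gap in how you parametrize the core. You set $\varphi(\dot C,\dot U^1,\ldots,\dot U^d)=(C+\dot C)\times_1(U^1+\dot U^1)\times_2\cdots\times_d(U^d+\dot U^d)$ with $\dot C\in T_C\mathcal{M}_\mathrm{c}$, and claim the image lies in $\mathcal M$ by~\eqref{eq: invariance Mhat}. But \eqref{eq: invariance Mhat} only gives invariance of $\mathcal{M}_\mathrm{c}$ under multilinear multiplication by invertible matrices; it does not imply $C+\dot C\in\mathcal{M}_\mathrm{c}$. Since $\mathcal{M}_\mathrm{c}$ is in general a \emph{curved} submanifold of $\R^{r_1\times\dots\times r_d}_*$ (e.g.\ a fixed-rank tensor-train manifold, which is the case of main interest here), $C+\dot C$ leaves $\mathcal{M}_\mathrm{c}$ for generic tangent directions, so the image of $\varphi$ is not contained in $\mathcal M$ and $g^{-1}(0)$ would contain points off the manifold. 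The same defect breaks your surjectivity step: a nearby $Y\in\mathcal M$ has, after fixing the gauge on the factors, a core $C'\in\mathcal{M}_\mathrm{c}$ that is generally \emph{not} of the form $C+\dot C$ with $\dot C\in T_C\mathcal{M}_\mathrm{c}$. Your construction is correct only in the unconstrained Tucker case $\mathcal{M}_\mathrm{c}=\R^{r_1\times\dots\times r_d}_*$, where the core constraint is open. The fix is to replace $C+\dot C$ by $\psi(\dot C)$ for a local chart $\psi\colon T_C\mathcal{M}_\mathrm{c}\supset B\to\mathcal{M}_\mathrm{c}$ with $\psi(0)=C$, $\psi'(0)=\id$ (which exists since $\mathcal{M}_\mathrm{c}$ is a smooth finite-dimensional manifold); with that modification $D\varphi(0)$ is unchanged and the rest of your argument, including the inverse-function-theorem construction of $g$ and the surjectivity via continuity of the minimal subspaces and invariance of $\mathcal{M}_\mathrm{c}$ under the resulting change of basis, goes through.
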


The proof of this theorem is given in the appendix. For an alternative treatment of low-rank manifolds in Banach spaces, see \cite{Falco19,Falco21}.

\subsubsection{Tangent space projection}

We now consider the orthogonal projection onto the tangent space $T_X \mathcal M$ at given $X = C \times_1 U^1 \times_2 \dots \times_d U^d$. By Theorem~\ref{thm: manifold}(iii), $T_X \mathcal M$ is spanned by elements $\xi = \xi_0 + \xi_1 + \dots + \xi_d$ of the form~\eqref{eq: tangent vectors}. Here the elements $\xi_0 = \dot{C} \times_1 U^1 \times_2 \dots \times_d U^d$ with $\dot C \in T_{C} \mathcal{M}_\mathrm{c}$ span a subspace of $\mathcal U^1 \otimes \dots \otimes \mathcal U^d$ which we denote by $\mathcal S_X$. For $\mu=1,\dots,d$, the elements $\xi_\mu = C \times_1 U^1 \times_2 \dots \times_{\mu} \dot U^\mu \times_{\mu+1} \dots \times_d U^d$ with $(U^\mu)^\T \dot U^\mu = 0$ are equivalently described via their matricization as
\[
M_{\xi_\mu}^\mu = \dot U^\mu (V^\mu)^\T = \sum_{k_\mu=1}^{r_\mu} \dot u^\mu_{k_\mu} \otimes v^\mu_{k_\mu}
\]
due to the definition~\eqref{eq: definition of v_k} of $V^\mu$. Since actually any element in the space $(\mathcal U^\mu)^\perp \otimes \mathcal V^\mu$ can be written in this way, the $M_{\xi_\mu}^\mu$ span this space. Treating the $(\mathcal U^\mu)^\perp \otimes \mathcal V^\mu$ as subspaces of $\ell_2(\mathbb N^d)$ (in a slight abuse of notation) we conclude that
\begin{equation}\label{eq: decomposition of tangent space}
T_X \mathcal M = \mathcal S_X \oplus [(\mathcal U^1)^\perp \otimes \mathcal V^1] \oplus \dots \oplus [(\mathcal U^d)^\perp \otimes \mathcal V^d],
\end{equation}
which indeed is an orthogonal decomposition as can be seen from the fact that $\mathcal V^\mu \subseteq \mathcal U^1 \otimes \dots \otimes \mathcal U^{\mu-1} \otimes \mathcal U^{\mu+1} \otimes \dots \otimes \mathcal U^d$ for every $\mu$.

In the following proposition, we compute the tangent space projection under the assumption that the matrices $U^\mu$ have orthonormal columns; see Remark~\ref{remark: tangentprojection} for the general formula.

\begin{proposition}\label{prop: tangent space projection}
Let $X = C \times_1 U_1 \times_2 \dots \times_d U_d \in \mathcal M$, and assume $(U^\mu)^\T U^\mu = \id$. The orthogonal projection onto the tangent space $T_X \mathcal M$ is given as
\begin{equation}\label{eq: projector splitting}
P_X = P^0_X + P^1_X + \dots + P^d_X
\end{equation}
with $P^1_X,\dots,P^d_X$ being implicitly defined via their action on matricizations as
\begin{equation}\label{eq: matricization projectors}
M^\mu_{P^\mu_X(Z)} = (I - P_{\mathcal U^\mu}) M^\mu_Z P_{\mathcal V^\mu}, \quad \mu = 1,\dots,d.
\end{equation}
The projector $P_X^0$ is defined as
\begin{equation}\label{eq: projector P0}
P_X^0 (Z) = P_C(C_Z)\times_1 U^1 \times_2 \dots \times_d U^d,
\end{equation}
where $P_C$ is the orthogonal tangent space projector onto $T_C \mathcal{M}_\mathrm{c}$ in $\R^{r_1 \times \dots \times r_d}$, and $C_Z = Z \times_1 (U^1)^T \times_2 \dots \times_d (U^d)^T$.
\end{proposition}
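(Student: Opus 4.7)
The plan is to leverage the orthogonal direct-sum decomposition
\[
T_X \mathcal M = \mathcal S_X \oplus [(\mathcal U^1)^\perp \otimes \mathcal V^1] \oplus \dots \oplus [(\mathcal U^d)^\perp \otimes \mathcal V^d]
\]
established in~\eqref{eq: decomposition of tangent space}. Because this decomposition is orthogonal, the $\mathcal H$-orthogonal projector $P_X$ onto $T_X \mathcal M$ is simply the sum of the $\mathcal H$-orthogonal projectors onto each of the $d+1$ summands, which yields the splitting~\eqref{eq: projector splitting}. What remains is to verify that $P^0_X$ and the $P^\mu_X$ defined in the proposition are precisely those summand projectors.

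For each $\mu = 1,\ldots,d$, I would proceed via the $\mu$-th matricization, which is a Hilbert-space isometry between $\ell_2(\N^d)$ and $\ell_2(\N) \otimes \ell_2(\N^{d-1})$. Since isometries intertwine orthogonal projectors, the task reduces to computing the orthogonal projector onto the matrix subspace $(\mathcal U^\mu)^\perp \otimes \mathcal V^\mu$. Using the standard fact that the orthogonal projector onto a tensor-product subspace $\mathcal A \otimes \mathcal B$ equals the tensor product $P_\mathcal A \otimes P_\mathcal B$ of the factor projectors, this projector acts as $Z \mapsto (I - P_{\mathcal U^\mu}) Z P_{\mathcal V^\mu}$ on matricizations, which is precisely~\eqref{eq: matricization projectors}.

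For the core component $P^0_X$, I would exploit that the orthonormality assumption $(U^\mu)^\T U^\mu = I$ makes the map $\Phi \colon C \mapsto C \times_1 U^1 \times_2 \dots \times_d U^d$ an isometric embedding of $\R^{r_1 \times \dots \times r_d}$ onto $\mathcal U^1 \otimes \dots \otimes \mathcal U^d$, whose adjoint is given by $\Phi^\T \colon Z \mapsto Z \times_1 (U^1)^\T \times_2 \dots \times_d (U^d)^\T$, so that $\Phi^\T Z = C_Z$. By construction $\mathcal S_X = \Phi(T_C \mathcal{M}_\mathrm{c})$, so the $\mathcal H$-orthogonal projector onto $\mathcal S_X$ equals $\Phi \circ P_C \circ \Phi^\T$, and unfolding this composition reproduces exactly~\eqref{eq: projector P0}.

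The main obstacle is not any individual computation but rather the bookkeeping needed to confirm pairwise orthogonality of the summands in~\eqref{eq: decomposition of tangent space}, which relies on the inclusion $\mathcal V^\nu \subseteq \mathcal U^1 \otimes \dots \otimes \mathcal U^{\nu-1} \otimes \mathcal U^{\nu+1} \otimes \dots \otimes \mathcal U^d$ and on the orthogonality $\mathcal U^\mu \perp (\mathcal U^\mu)^\perp$, together with the careful transfer of matrix projector identities back to $\mathcal H$ through the matricization isometry. Once these routine identities are assembled, summing the $d+1$ summand projectors yields the claimed formula.
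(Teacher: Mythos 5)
Your proposal is correct and follows essentially the same route as the paper: both rest on the orthogonal decomposition~\eqref{eq: decomposition of tangent space}, identify $P_X^1,\dots,P_X^d$ as the tensor-product projectors onto $(\mathcal U^\mu)^\perp \otimes \mathcal V^\mu$ read off through the matricization isometry, and characterize $P_X^0$ as the projector onto $\mathcal S_X$ via the isometric embedding $C \mapsto C \times_1 U^1 \times_2 \cdots \times_d U^d$ (the paper phrases this last step as the variational orthogonality condition $\langle C_Z - K, \dot C\rangle = 0$ for all $\dot C \in T_C\mathcal M_{\mathrm c}$, which is the same computation as your $\Phi \circ P_C \circ \Phi^{\mathsf T}$ identity).
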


\begin{proof}
By~\eqref{eq: decomposition of tangent space}, the single terms $\xi_0,\dots,\xi_d$ in the tangent vector representation~\eqref{eq: tangent vectors} belong to mutually orthogonal subspaces. Therefore, the orthogonal projection $P_X(Z) = \xi_0 + \xi_1 + \dots + \xi_d$ onto $T_X \mathcal M$ can be decomposed accordingly as in~\eqref{eq: projector splitting}. Here the $P_X^\mu$ for $\mu = 1,\dots,d$ are the projections on $(\mathcal U^\mu)^\perp \otimes \mathcal V^\mu$ which have the asserted form. We consider the projection $\xi_0 = P_X^0(Z)$ of a given $Z$ onto the space $\mathcal S_X$ in~\eqref{eq: decomposition of tangent space}. We write
\[
\xi_0 = K \times_1 U^1 \times_2 \dots \times_d U^d
\]
and need to determine $K \in T_C \mathcal{M}_\mathrm{c}$. The orthogonality condition for the projection is
\[
 0 = \langle Z - \xi_0, \dot C \times_1 U^1 \times_2 \dots \times_d U^d \rangle
\] 
for all $\dot C \in T_C \mathcal{M}_\mathrm{c}$. Using properties of tensor-matrix multiplication, we rewrite this as
 \begin{align*}
 0 &= \langle Z \times_1 (U^1)^\T \times_2 \dots \times_d (U^d)^\T - \xi_0 \times_1 (U^1)^\T \times_2 \dots \times_d (U^d)^\T, \dot C \rangle \\
 &= \langle Z \times_1 (U^1)^\T \times_2 \dots \times_d (U^d)^\T - K, \dot C \rangle
= \langle  C_Z - K, \dot C \rangle.
\end{align*}
Since this holds for all $\dot C \in T_C \mathcal{M}_\mathrm{c}$, it follows that $K$ equals the orthogonal projection of $C_Z$ onto $T_C \mathcal{M}_\mathrm{c}$.
\end{proof}

\begin{remark}\label{remark: tangentprojection}
If the $U^\mu$ are not orthonormal, then the formula for $P_X^0 (Z)$ needs to be adjusted to
\[
P_X^0 (Z) = \Pi_C(C_Z),
\]
where $\Pi_C$ is the orthogonal projection in $\R^{r_1 \times \dots\times r_d}$ onto $T_C \mathcal{M}_\mathrm{c}$ with respect to the inner product induced by the operator $\mathbf A = (U^1)^T U^1 \otimes \dots \otimes (U^d)^T U^d$, which is symmetric and positive definite on $\R^{r_1 \times \dots\times r_d}$. This projection is given by 
\(
\Pi_C = (P_C \mathbf A P_C)^{-1} P_C \mathbf A,
\) 
where $(P_C \mathbf A P_C)^{-1}$ denotes the inverse of $P_C \mathbf A P_C$ on $T_C \mathcal{M}_\mathrm{c}$. 
\end{remark}

\subsubsection{Distance to boundary}

As we will see in \Cref{sec:curvature},
curvature estimates as in ~\ref{property:curvature} for low-rank tensor manifolds can be expressed in terms of inverses of smallest singular values of certain matricizations.
In this subsection, we therefore estimate the smallest singular values of matricizations of a tensor $X\in\mathcal M$ from below by its distance to the boundary $\overline{\mathcal M}^w\setminus \mathcal M$.
This will have the effect, that on every weakly compact subset~$\mathcal M'\subseteq \mathcal M$ these singular values remain bounded from below.

We first give a characterization of the weak closure of $\mathcal M$.
In what follows, by $\norm{\cdot}$ without further specification we denote the Frobenius norm of tensors. 

\begin{lemma}\label{lm: closureManifold}
  Let $\mathcal M$ be of the form~\eqref{eq: Tucker format} with $\mathcal M_\mathrm c$ satisfying~\eqref{eq: invariance Mhat}. Then 
\[
    \overline{\mathcal M}^w = \overline{\mathcal M} = \Big\{ X = C \times_1 U^1 \times_2 \cdots \times_d U^d\colon
    C\in \overline{\mathcal M_\mathrm{c}},\quad (U^\mu)^\T U^\mu\in \mathrm{GL_{r_\mu}}
    \Big\}, 
\]
    that is, the weak closure and closure of~$\mathcal M$ coincide and are of the form~\eqref{eq: Tucker format} with $\mathcal M_\mathrm c$ replaced by $\overline{\mathcal{M}_\mathrm{c}}$.
\end{lemma}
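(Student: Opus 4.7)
The strategy is to establish the chain
\[
\overline{\mathcal M}\subseteq\overline{\mathcal M}^w\subseteq\mathcal N\subseteq\overline{\mathcal M},
\]
where $\mathcal N$ denotes the set on the right-hand side of the claimed identity. The first inclusion is a general fact, so two inclusions remain. The inclusion $\mathcal N\subseteq\overline{\mathcal M}$ is the easy one: for $X = C\times_1 U^1\times_2\cdots\times_d U^d$ with $C\in\overline{\mathcal M_\mathrm c}$ and $(U^\mu)^\T U^\mu$ invertible, I choose $C_n\in\mathcal M_\mathrm c$ with $C_n\to C$ in $\R^{r_1\times\cdots\times r_d}$ and set $X_n\coloneqq C_n\times_1 U^1\times_2\cdots\times_d U^d$. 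Then $X_n\in\mathcal M$, since the Gramians $G(u^\mu)=(U^\mu)^\T U^\mu$ are invertible by assumption, and $X_n\to X$ in $\mathcal H$ by continuity of multilinear multiplication.

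For $\overline{\mathcal M}^w\subseteq\mathcal N$, take $X_n\in\mathcal M$ with $X_n\wto X$ weakly in $\mathcal H$. Applying a mode-wise QR factorization, rewrite $X_n = \tilde C_n\times_1\tilde U^1_n\times_2\cdots\times_d\tilde U^d_n$ with $(\tilde U^\mu_n)^\T \tilde U^\mu_n = I_{r_\mu}$. The invariance~\eqref{eq: invariance Mhat} guarantees $\tilde C_n\in\mathcal M_\mathrm c$. Since $\norm{X_n}_\mathcal H = \norm{\tilde C_n}$ is bounded by weak convergence and each column of $\tilde U^\mu_n$ has unit $\ell_2$-norm, I may pass to a subsequence so that $\tilde C_n\to C\in\overline{\mathcal M_\mathrm c}$ in finite dimensions and $\tilde U^\mu_n\wto\tilde U^\mu$ columnwise in $\ell_2(\N)$. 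The key auxiliary claim is that the multilinear product passes to the limit, namely $X_n\wto C\times_1\tilde U^1\times_2\cdots\times_d\tilde U^d$ weakly in $\mathcal H$. Testing against $Y\in\mathcal H$ and expanding over the finite index set, each summand reduces to a value of the form $\langle \tilde u^1_{n,k_1}, M^1_Y(\tilde u^2_{n,k_2}\otimes\cdots\otimes\tilde u^d_{n,k_d})\rangle$, where $M^1_Y$ is Hilbert--Schmidt and hence compact; compactness turns the weak convergence of the right factor into strong convergence and, combined with the weak convergence of $\tilde u^1_{n,k_1}$ and the convergence of the core coefficients, yields convergence term by term. Uniqueness of weak limits identifies the limit with $X$.

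The main obstacle is that the limits $\tilde U^\mu$ may have linearly dependent columns, so $(\tilde U^\mu)^\T\tilde U^\mu$ need not be invertible and $X$ is not yet in the form required by $\mathcal N$. To remedy this, factor $\tilde U^\mu = \hat U^\mu B^\mu$, where $\hat U^\mu\in(\ell_2(\N))^{r_\mu}$ has orthonormal columns obtained by extending an orthonormal basis of $\spn(\tilde U^\mu)$ (possible because $\ell_2(\N)$ is infinite-dimensional) and $B^\mu\in\R^{r_\mu\times r_\mu}$ is a possibly singular matrix. Then $X = \hat C\times_1\hat U^1\times_2\cdots\times_d\hat U^d$ with $\hat C\coloneqq C\times_1 B^1\times_2\cdots\times_d B^d$ and $(\hat U^\mu)^\T\hat U^\mu = I_{r_\mu}$. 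To see $\hat C\in\overline{\mathcal M_\mathrm c}$, perturb each $B^\mu$ to an invertible $B^\mu_\varepsilon\to B^\mu$; the invariance~\eqref{eq: invariance Mhat} applied along an approximating sequence in $\mathcal M_\mathrm c$ gives $C\times_1 B^1_\varepsilon\times_2\cdots\times_d B^d_\varepsilon\in\overline{\mathcal M_\mathrm c}$, and letting $\varepsilon\to 0$ and using closedness of $\overline{\mathcal M_\mathrm c}$ yields $\hat C\in\overline{\mathcal M_\mathrm c}$, which completes the proof.
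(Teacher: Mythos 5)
Your proof is correct, but the route you take for the hard inclusion $\overline{\mathcal M}^{w}\subseteq\mathcal N$ is genuinely different from the paper's. The paper invokes an external result (Hackbusch, Thm.~6.29) asserting that a weak limit of elements of $\mathcal M$ lies in a tensor product $\mathcal U^1\otimes\cdots\otimes\mathcal U^d$ of $r_\mu$-dimensional subspaces; it then sets $C=X\times_1(U^1)^\T\times_2\cdots\times_d(U^d)^\T$ for orthonormal bases $U^\mu$ of these subspaces and identifies $C\in\overline{\mathcal M_\mathrm c}$ as the limit of $C_n\times_1(U^1)^\T U^1_n\times_2\cdots\times_d(U^d)^\T U^d_n$. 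You instead prove the required structure of the weak limit from scratch: orthonormalize the factors of $X_n$, extract convergent subsequences of the bounded cores and unit-norm factor columns, and pass the multilinear product to the weak limit via compactness of the Hilbert--Schmidt matricization $M^1_Y$. This buys self-containedness at the cost of length, and it forces you to confront explicitly the possible rank degeneration of the limiting factors $\tilde U^\mu$, which you resolve correctly with the factorization $\tilde U^\mu=\hat U^\mu B^\mu$ and a perturbation of $B^\mu$ to invertible matrices; note that the paper's corresponding step (concluding $C\in\overline{\mathcal M_\mathrm c}$ from the invariance condition even though the matrices $(U^\mu)^\T U^\mu_n$ need not be invertible) tacitly requires the same perturbation, so your explicitness is a plus. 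One point to tighten: the weak convergence of the elementary tensors $\tilde u^2_{n,k_2}\otimes\cdots\otimes\tilde u^d_{n,k_d}$, which you feed into the compactness argument, is for $d\ge 3$ itself an instance of the claim being proved, so the argument should be phrased as an induction over the number of factors (the step you give is exactly the inductive step). The easy inclusion $\mathcal N\subseteq\overline{\mathcal M}$ is handled exactly as in the paper.
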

\begin{proof}
  Let $(X_n)\subset \mathcal M$ be a sequence converging weakly to $X\in\overline{\mathcal M}^w$. By~\cite[Thm.~6.29]{Hackbusch19}, there are $r_\mu$~dimensional subspaces $\mathcal U^\mu$ such that $X\in \mathcal U^1\otimes \dots\otimes \mathcal U^d$. In particular, let $U^\mu\in (\ell_2(\mathbb N))^{r_\mu}$ be orthonormal bases of the spaces $\mathcal U^\mu$. Then
  \[
  X = \left(X \times_1 (U^1)^\T \times_2 \cdots \times_d (U^d)^\T\right)  \times_1 U^1 \times_2 \cdots \times_d U^d
    = C \times_1 U^1 \times_2 \cdots \times_d U^d
  \]
  with $C\in \R^{r_1\times\dots}\times r_d$.
  Moreover, since $X_n\in \mathcal M$, we have
  \[
    X_n = C_n  \times_1 U^1_n \times_2 \cdots \times_d U^d_n
  \]
  and by weak convergence, we have
  \[
    \lim_{n\to\infty} C_n  \times_1 (U^1)^\T U^1_n \times_2 \cdots \times_d (U^d)^\T U^d_n  
    =
    C.
  \]
  By the invariance condition~\eqref{eq: invariance Mhat} for $\mathcal M_\mathrm c$, it follows that $C\in \overline{\mathcal M_\mathrm c}$. This shows
  \[
    \overline{\mathcal M}^w = \Big\{ X = C \times_1 U^1 \times_2 \cdots \times_d U^d\colon
    C\in \overline{\mathcal M_\mathrm{c}},\quad (U^\mu)^\T U^\mu\in \mathrm{GL_{r_\mu}}
    \Big\}.
  \]
  Now let $(\tilde C_n) \subset \mathcal M_\mathrm c$ be a sequence converging to $C$. Then the sequence defined by
  \[
    \tilde X_n = \tilde C_n  \times_1 U^1 \times_2 \cdots \times_d U^d
  \]
  converges strongly to $X$ as $\norm{\tilde X_n-X}_{\ell_2(\N^d)} = \norm{\tilde C_n -C}$. This proves the assertion.
\end{proof}

For $X \in \mathcal{M}$ and $\mu=1,\ldots,d$, let $\{ u^\mu_1,\ldots, u^\mu_{r_\mu} \}$ be the left singular vectors of the matricization $M^\mu_X$. Then 
\[
   M^\mu_X = \sum_{k=1}^{r_\mu} u^\mu_k \otimes v^\mu_k  
\]
with $\{ v^\mu_1,\ldots, v^\mu_{r_\mu}\}$ orthogonal in $\ell_2(\N^{d-1})$ such that
\begin{equation}\label{eq:singvalX}
  \sigma^\mu_k = \sigma^\mu_k(X) = \norm{ v^\mu_k }_{\ell_2(\N^{d-1})}, \quad k = 1,\ldots,r_\mu,
\end{equation}
are the singular values of $M^\mu_X$, for which we may assume
\[
   \sigma^\mu_1 \geq \sigma^\mu_2 \geq \ldots \geq \sigma^\mu_{r_\mu}.  
\]
Further, we define 
\begin{equation}\label{eq: distance}
\sigma = \dist(X,\overline{{\mathcal M}}^w \setminus {\mathcal M}).
\end{equation}
\begin{proposition}\label{prop:singvaldistest}
  
  Let $\sigma^\mu_k$ for $\mu=1,\ldots,d$ and $k=1,\ldots,r_\mu$ be defined as above.  Then
  \[
     \min_{\mu \in \{1,\ldots,d\} } \sigma^\mu_{r_\mu} \geq  \sigma.
  \]
\end{proposition}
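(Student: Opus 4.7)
The plan is to produce, for any fixed mode $\mu \in \{1,\dots,d\}$, an explicit element $\tilde X \in \overline{\mathcal M}^w \setminus \mathcal M$ with $\|X-\tilde X\|_{\mathcal H} = \sigma^\mu_{r_\mu}(X)$, which by definition of $\sigma$ immediately gives $\sigma \le \sigma^\mu_{r_\mu}$; taking the minimum over $\mu$ yields the claim.

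First I would reduce to orthonormal factors. Writing $X = C \times_1 U^1 \times_2 \cdots \times_d U^d$, apply a QR decomposition $U^\nu = \tilde U^\nu R^\nu$ in each mode and absorb the invertible factors $R^\nu$ into the core by setting $\tilde C = C \times_1 R^1 \times_2 \cdots \times_d R^d$. By the invariance assumption~\eqref{eq: invariance Mhat}, $\tilde C \in \mathcal M_{\mathrm c}$, and since the $\tilde U^\nu$ have orthonormal columns, the singular values of the $\mu$-th matricization $M^\mu_X$ coincide with those of $M^\mu_{\tilde C}$. In particular, since $\tilde C$ has full multilinear rank, $M^\mu_{\tilde C}$ has rank exactly $r_\mu$ with smallest singular value $\sigma^\mu_{r_\mu}(X)$.

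Next, I would construct the candidate boundary point by truncating the core in mode $\mu$. Let $M^\mu_{\tilde C} = A\Sigma B^\T$ be an SVD with $\Sigma = \diag(\sigma^\mu_1,\dots,\sigma^\mu_{r_\mu})$, and define $\Sigma_\varepsilon = \diag(\sigma^\mu_1,\dots,\sigma^\mu_{r_\mu-1},\varepsilon)$ together with the invertible matrix $T_\varepsilon = A\Sigma_\varepsilon \Sigma^{-1} A^\T$. A direct computation shows $T_\varepsilon M^\mu_{\tilde C} = A \Sigma_\varepsilon B^\T$, and letting $\varepsilon \searrow 0$, the tensor $\tilde C \times_\mu T_\varepsilon$, which lies in $\mathcal M_{\mathrm c}$ by~\eqref{eq: invariance Mhat}, converges to a tensor $\tilde C_0 \in \overline{\mathcal M_{\mathrm c}}$ whose $\mu$-th matricization is the best rank-$(r_\mu - 1)$ approximation of $M^\mu_{\tilde C}$. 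Then I define
\[
\tilde X = \tilde C_0 \times_1 \tilde U^1 \times_2 \cdots \times_d \tilde U^d.
\]
By \Cref{lm: closureManifold}, $\tilde X \in \overline{\mathcal M}^w$. Moreover $M^\mu_{\tilde X} = \tilde U^\mu M^\mu_{\tilde C_0} (\tilde U^{\neq \mu})^\T$ has rank at most $r_\mu - 1$, so $\tilde X$ cannot lie in $\mathcal M$ (every element of which has $\mu$-matricization of rank exactly $r_\mu$). Hence $\tilde X \in \overline{\mathcal M}^w \setminus \mathcal M$.

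Finally, using orthonormality of the factors, the Frobenius norm of the difference of matricizations gives
\[
\|X - \tilde X\|_{\mathcal H}^2 = \|M^\mu_{\tilde C} - M^\mu_{\tilde C_0}\|^2 = (\sigma^\mu_{r_\mu})^2,
\]
so $\sigma \le \sigma^\mu_{r_\mu}$, and minimizing over $\mu$ concludes the argument. The only genuine technical point is ensuring that $\tilde C_0 \in \overline{\mathcal M_{\mathrm c}}$, and this is precisely where the invariance property~\eqref{eq: invariance Mhat} is essential: it lets us realize the truncated core as a strong limit along curves of invertible mode-$\mu$ transformations starting from $\tilde C \in \mathcal M_{\mathrm c}$.
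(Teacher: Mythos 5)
Your proposal is correct and follows essentially the same route as the paper: both construct the candidate boundary point $\tilde X$ as the rank-$(r_\mu-1)$ truncation of the $\mu$-th matricization (realized as a mode-$\mu$ core modification), show $\tilde X \in \overline{\mathcal M}^w \setminus \mathcal M$ via \Cref{lm: closureManifold} and the invariance~\eqref{eq: invariance Mhat}, and read off $\|X-\tilde X\| = \sigma^\mu_{r_\mu}$. Your explicit limiting argument with $T_\varepsilon$ merely spells out a step the paper leaves implicit.
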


\begin{proof}
Let $\mu \in \{1,\ldots,d\}$ and let $u^\mu_1, \ldots, u^\mu_{r_\mu}$ be the left singular vectors of $M^\mu_X$ associated to $\sigma^\mu_1,\ldots,\sigma^\mu_{r_\mu}$, respectively. Then for the tensor $\tilde X$ defined by its matricization 
\[
   M^\mu_{\tilde X} =  \sum_{k=1}^{r_\mu - 1} u^\mu_k (u^\mu_k)^\top M^\mu_X,
\]
we have $\norm{X - \tilde X}_{\ell_2(\N^d)} = \sigma^\mu_{r_\mu}$, and $\tilde X \notin  \mathcal M$.
Furthermore, we have
\[
 X = C \times_1 U^1 \times _2 \dots \times_d U^d, 
\]
with $U^\mu = [u^\mu_1,\dots, u^\mu_{r_\mu}]$ and
\[
  \tilde X = C \times_1 U^1 \times _2\dots 
  \times_\mu   U^\mu (\tilde U^\mu)^\top U^\mu 
  \times_{\mu+1}\dots \times_d U^d\\
  = (C\times_\mu  (\tilde U^\mu)^\top U^\mu ) \times_1 U^1 \times _2 \dots \times_d U^d
\]
with $\tilde U^\mu = [u^\mu_1,\dots, u^\mu_{r_\mu-1},0]$. It follows from \Cref{lm: closureManifold} that $\tilde X\in \overline {\mathcal M}$ and the claim is proven.
\end{proof}

It is important to note that
the distance~$\sigma$ defined in~\eqref{eq: distance} can be expressed as the distance of the core tensor $C$ to the relative boundary of~$\mathcal M_\mathrm c$.

\begin{proposition}\label{prop:distequal}
  Let $X = C \times_1 U_1 \times_2 \dots \times_d U_d$  with $C\in\mathcal{M}_\mathrm{c}$ and orthonormal $U_1,\ldots, U_d$. Then $\dist(C,\overline{\mathcal{M}_\mathrm{c}} \setminus \mathcal{M}_\mathrm{c})= \dist(X,\overline{{\mathcal M}}^w \setminus {\mathcal M})$.
\end{proposition}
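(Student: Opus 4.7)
The plan is to exploit orthonormality of $U_1,\ldots,U_d$, which makes tensor--matrix multiplication by them an $\ell_2$-isometry on cores, and to use the orthogonal projection $\Pi$ onto the subspace $\mathcal U^1\otimes\cdots\otimes\mathcal U^d := \range(U_1)\otimes\cdots\otimes\range(U_d)$ to transport competitors from $\overline{\mathcal M}^w\setminus\mathcal M$ back to competitors from $\overline{\mathcal M_\mathrm c}\setminus\mathcal M_\mathrm c$, and vice versa.

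For the inequality $\dist(X,\overline{\mathcal M}^w \setminus \mathcal M) \le \dist(C, \overline{\mathcal M_\mathrm c}\setminus \mathcal M_\mathrm c)$, given any competitor $\tilde C \in \overline{\mathcal M_\mathrm c}\setminus \mathcal M_\mathrm c$ I would lift it to $\tilde X := \tilde C \times_1 U_1 \times_2 \cdots \times_d U_d$. By \Cref{lm: closureManifold} we have $\tilde X \in \overline{\mathcal M}^w$, while deficient multilinear rank of $\tilde C$ forces the same for $\tilde X$ (since each $U_\mu$ has full column rank, multiplication by $U_\mu$ preserves mode-$\mu$ rank), so $\tilde X \notin \mathcal M$. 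Orthonormality gives $\|X - \tilde X\|_{\ell_2(\N^d)} = \|C - \tilde C\|$, and taking the infimum over $\tilde C$ yields the bound.

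For the converse inequality, let $\tilde X \in \overline{\mathcal M}^w \setminus \mathcal M$ be arbitrary and set $\hat C := \tilde X \times_1 U_1^\T \times_2 \cdots \times_d U_d^\T$ together with $\hat X := \Pi \tilde X = \hat C \times_1 U_1 \times_2 \cdots \times_d U_d$. Because $X \in \mathcal U^1 \otimes \cdots \otimes \mathcal U^d$, Pythagoras gives $\|X - \tilde X\|^2 \ge \|X - \hat X\|^2$, and orthonormality yields $\|X - \hat X\| = \|C - \hat C\|$. The argument then reduces to proving $\hat C \in \overline{\mathcal M_\mathrm c}\setminus \mathcal M_\mathrm c$, so that $\|X-\tilde X\| \ge \dist(C,\overline{\mathcal M_\mathrm c}\setminus\mathcal M_\mathrm c)$ and infimizing over $\tilde X$ closes the argument.

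The main obstacle is this two-part verification of $\hat C$. For $\hat C \in \overline{\mathcal M_\mathrm c}$, I would use $\overline{\mathcal M}^w = \overline{\mathcal M}$ (\Cref{lm: closureManifold}) to approximate $\tilde X$ strongly by $X_n = C_n \times_1 V_n^1 \times_2 \cdots \times_d V_n^d \in \mathcal M$; then $\hat C_n := C_n \times_1 (U_1^\T V_n^1) \times_2 \cdots \times_d (U_d^\T V_n^d)$ lies in $\overline{\mathcal M_\mathrm c}$ (each $U_\mu^\T V_n^\mu$ can be rendered invertible by an arbitrarily small perturbation, after which the invariance assumption~\eqref{eq: invariance Mhat} places the perturbed product in $\mathcal M_\mathrm c$), and passing to the limit keeps $\hat C$ in the closed set $\overline{\mathcal M_\mathrm c}$. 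For $\hat C \notin \mathcal M_\mathrm c$, \Cref{lm: closureManifold} provides a representation $\tilde X = \tilde C \times_1 \tilde U^1 \times_2 \cdots \times_d \tilde U^d$ with $\tilde C \in \overline{\mathcal M_\mathrm c}\setminus \mathcal M_\mathrm c$, so $\tilde C$ has deficient multilinear rank. Since each $\tilde U^\mu$ has full column rank, this deficiency propagates to $\tilde X$: some matricization $M^\mu_{\tilde X}$ has rank strictly less than $r_\mu$. Further contraction by $U_\mu^\T$ in mode $\mu$ (and by $U_\nu^\T$ in the other modes) cannot increase this rank, so $\rank(M^\mu_{\hat C}) < r_\mu$, forcing $\hat C \notin \R^{r_1 \times \cdots \times r_d}_*$ and in particular $\hat C \notin \mathcal M_\mathrm c$.
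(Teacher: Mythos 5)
Your overall strategy coincides with the paper's: transport competitors between $\overline{\mathcal{M}_\mathrm{c}}\setminus\mathcal{M}_\mathrm{c}$ and $\overline{\mathcal M}^w\setminus\mathcal M$ via the orthonormal $U_\mu$, using the isometry $\|C-\tilde C\|=\|\tilde C\times_1 U_1\cdots\times_d U_d - X\|_{\ell_2(\N^d)}$ in one direction and the norm-nonincreasing contraction by $U_\mu^\T$ in the other (you phrase the latter via Pythagoras with the projection $\Pi$, the paper directly via $\|Y\times_1 U_1^\T\times_2\cdots\times_d U_d^\T\|\le\|Y\|$; these are equivalent). You also try to fill in the membership verifications that the paper delegates rather tersely to \Cref{lm: closureManifold}, which is the right instinct.

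There is, however, a genuine gap, and it occurs in both directions for the same reason: you assume that every $\tilde C\in\overline{\mathcal{M}_\mathrm{c}}\setminus\mathcal{M}_\mathrm{c}$ has deficient multilinear rank, i.e.\ $\tilde C\notin\R^{r_1\times\dots\times r_d}_*$. That is true only in the unconstrained Tucker case $\mathcal{M}_\mathrm{c}=\R^{r_1\times\dots\times r_d}_*$. In the constrained cases that motivate the paper (tensor trains, hierarchical Tucker), $\mathcal{M}_\mathrm{c}$ is a proper submanifold of $\R^{r_1\times\dots\times r_d}_*$ and its relative boundary contains tensors of \emph{full} multilinear rank whose internal ranks have dropped; for instance, with $d=4$, $\bfr=(2,2,2,2)$ and TT rank $\bfk=(2,3,2)$, a core with $k_2=2$ still has full multilinear rank but lies in $\overline{\mathcal{M}_\mathrm{c}}\setminus\mathcal{M}_\mathrm{c}$. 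For such $\tilde C$ your arguments that $\tilde X=\tilde C\times_1 U_1\times_2\cdots\times_d U_d\notin\mathcal M$ (first direction) and that $\hat C\notin\mathcal{M}_\mathrm{c}$ (second direction) say nothing. The missing ingredient is the invariance property~\eqref{eq: invariance Mhat}: if $\tilde C$ has full multilinear rank, then the minimal subspaces of $\tilde X$ are exactly $\range(U_\mu)$, so any representation of $\tilde X$ in the form~\eqref{eq: Tucker format} has core $\tilde C\times_1 T^1\times_2\cdots\times_d T^d$ with invertible $T^\mu$; were that core in $\mathcal{M}_\mathrm{c}$, invariance would force $\tilde C\in\mathcal{M}_\mathrm{c}$, a contradiction, so $\tilde X\notin\mathcal M$. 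Analogously, in the second direction you must distinguish the case where all matrices $U_\mu^\T\tilde U^\mu$ are invertible (then $\hat C\in\mathcal{M}_\mathrm{c}$ would give $\tilde C\in\mathcal{M}_\mathrm{c}$ by invariance) from the case where some are singular (only then does $\hat C$ leave $\R^{r_1\times\dots\times r_d}_*$). With these case distinctions inserted, your proof closes and is essentially the paper's.
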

\begin{proof}
  First, let $Y\in \overline{{\mathcal M}}^w \setminus {\mathcal M}$ satisfy $\|X-Y\|_{\ell_2(\N^d)}= \dist(X,\overline{{\mathcal M}}^w \setminus {\mathcal M})$. Then the tensor $D = Y\times_1 U_1^\T \times_2 \dots \times_d U_d^\T$  satisfies $\|C-D\|\leq\|X-Y\|_{\ell_2(\N^d)}$ and by \Cref{lm: closureManifold}, we also have $D\in \overline{\mathcal{M}_\mathrm{c}} \setminus \mathcal{M}_\mathrm{c}$, and hence $\dist(C,\overline{\mathcal{M}_\mathrm{c}} \setminus \mathcal{M}_\mathrm{c})\leq \dist(X,\overline{{\mathcal M}}^w \setminus {\mathcal M})$. To show equality, we consider a $D\in\overline{\mathcal{M}_\mathrm{c}} \setminus \mathcal{M}_\mathrm{c}$ with 
  $\|C-D\| = \dist(C,\overline{\mathcal{M}_\mathrm{c}} \setminus \mathcal{M}_\mathrm{c})$. Set $Y = D \times_1 U_1 \times_2 \dots \times_d U_d\in \overline{{\mathcal M}}^w \setminus {\mathcal M}$. Then $\|X-Y\|_{\ell_2(\N^d)}=\|C-D\|$ holds, and thus $\dist(C,\overline{\mathcal{M}_\mathrm{c}} \setminus \mathcal{M}_\mathrm{c})\geq \dist(X,\overline{{\mathcal M}}^w \setminus {\mathcal M})$.
\end{proof}

\subsubsection{Curvature estimates} \label{sec:curvature}

We now turn to the curvature bounds in Assumption~\ref{property:curvature}.
We first derive an estimate for the norm difference $\| P_X - P_Y \|_{\ell_2(\N^d) \to \ell_2(\N^d)}$ of two such projections in operator norm, which can be regarded as a curvature estimate for the manifold $\mathcal M$. It will be required for Assumption~\ref{property:curvature}.

\begin{proposition}\label{prop: curvature estimate}
Assume a curvature estimate
\begin{equation}\label{eq:Mccurvest}
\max_{\norm{Z}=1} \| (P_C - P_{\tilde C})Z \| \le \frac{c}{\hat\sigma} \| C - \tilde C \|\quad\text{for all $C,\tilde C\in \mathcal{M}_\mathrm{c}$,}
\end{equation}
where $\hat\sigma = \dist(C,\overline{\mathcal{M}_\mathrm{c}} \setminus \mathcal{M}_\mathrm{c})$ and where $c>0$ is independent of $C, \tilde C$.
Let $X,Y \in \mathcal M$ with corresponding tangent space projections $P_X$ and~$P_Y$. Then
\[
\| P_X - P_Y \|_{\ell_2(\N^d) \to \ell_2(\N^d)} \le \left( \frac{\sqrt{2}c}{\sigma} +  2(\sqrt{2}+1)\sum_{\mu=1}^d \frac{1}{\sigma^\mu_{r_\mu}}  \right) \| X - Y \|_{\ell_2(\N^d)}
\]
where $\sigma = \dist(X,\overline{\mathcal M}^w \setminus \mathcal M)$ and $\sigma^\mu_{r_\mu}$ is the smallest singular value of the $\mu$-th matricization.
\end{proposition}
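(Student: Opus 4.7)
My plan is to exploit the orthogonal splitting $P_X = P_X^0 + \sum_{\mu=1}^d P_X^\mu$ from \pr{\ref{prop: tangent space projection}} (and the analogous one for $Y$) and bound each difference $\|P_X^\mu - P_Y^\mu\|_{\mathrm{op}}$ separately, combining through the triangle inequality. This splitting is natural because for $\mu \ge 1$ the projector $P_X^\mu$ is determined purely by the subspaces $\mathcal U^\mu(X)$ and $\mathcal V^\mu(X)$, whereas $P_X^0$ captures the intrinsic curvature of $\mathcal M_\mathrm c$.

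For each $\mu\in\{1,\dots,d\}$, I would use the matricization formula \eqref{eq: matricization projectors} to write
\begin{align*}
M^\mu_{(P^\mu_X - P^\mu_Y)(Z)}
&= (P_{\mathcal U^\mu(Y)}-P_{\mathcal U^\mu(X)})\,M^\mu_Z\,P_{\mathcal V^\mu(X)} \\
&\quad + (I-P_{\mathcal U^\mu(Y)})\,M^\mu_Z\,(P_{\mathcal V^\mu(X)}-P_{\mathcal V^\mu(Y)}),
\end{align*}
which, using that projectors are contractions, yields
\[
\|P^\mu_X-P^\mu_Y\|_{\mathrm{op}} \le \|P_{\mathcal U^\mu(X)}-P_{\mathcal U^\mu(Y)}\|_{\mathrm{op}} + \|P_{\mathcal V^\mu(X)}-P_{\mathcal V^\mu(Y)}\|_{\mathrm{op}}.
\]
Since $\mathcal U^\mu(X)$ and $\mathcal V^\mu(X)$ are, respectively, the column and row spaces of the rank-$r_\mu$ matricization $M^\mu_X$, and $\|M^\mu_X-M^\mu_Y\| = \|X-Y\|$, I would apply Wedin's $\sin\Theta$ theorem to the pair $(M^\mu_X,M^\mu_Y)$ to obtain bounds of the form $\|P_{\mathcal U^\mu(X)}-P_{\mathcal U^\mu(Y)}\|_{\mathrm{op}} \lesssim \|X-Y\|/\sigma^\mu_{r_\mu}$ and similarly for the $\mathcal V^\mu$-projectors; positivity of $\sigma^\mu_{r_\mu}$ on $\mathcal M$ is guaranteed by \pr{\ref{prop:singvaldistest}}. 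Careful tracking of the Wedin constants, combined with Weyl's inequality when passing between $\sigma^\mu_{r_\mu}$ of $X$ and of $Y$, produces the prefactor $2(\sqrt 2+1)$.

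For the $\mu = 0$ term, I would write $P_X^0 = J_X \circ P_{C_X} \circ J_X^\T$, where $J_X\colon \R^{r_1 \times \dots \times r_d}\to \ell_2(\N^d)$, $C \mapsto C \times_1 U^1_X \times_2 \dots \times_d U^d_X$, is the isometric embedding defined by orthonormal bases $U^\mu_X$ of $\mathcal U^\mu(X)$, and similarly $P_Y^0 = J_Y P_{C_Y} J_Y^\T$. After choosing the bases $U^\mu_Y$ of $\mathcal U^\mu(Y)$ to be optimally aligned with $U^\mu_X$ via polar/Procrustes alignment, the telescoping
\[
P_X^0 - P_Y^0 = J_X(P_{C_X}-P_{C_Y})J_Y^\T + J_X P_{C_X}(J_X - J_Y)^\T + (J_X - J_Y) P_{C_Y} J_Y^\T
\]
reduces the middle term to the assumed core-manifold curvature estimate, which together with \pr{\ref{prop:distequal}} (identifying $\hat\sigma$ with $\sigma$) gives a bound of the form $(c/\sigma)\|C_X - C_Y\|$; an orthogonal decomposition of $X-Y$ into in-subspace and out-of-subspace parts then yields $\|C_X-C_Y\| \le \sqrt 2 \|X-Y\|$. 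The two remaining basis-change terms, controlled by $\|J_X - J_Y\|$, reduce through the alignment to bounds of the form $\|U^\mu_X - U^\mu_Y\| \lesssim \|P_{\mathcal U^\mu(X)} - P_{\mathcal U^\mu(Y)}\|_{\mathrm{op}}$ and feed back into the $\sum_{\mu=1}^d 1/\sigma^\mu_{r_\mu}$ contributions already estimated.

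The main obstacle I expect is the $\mu = 0$ term, where the coupling between the curvature of $\mathcal M_\mathrm c$ and the non-canonical choice of bases $U^\mu_X$, $U^\mu_Y$ must be carefully disentangled. The alignment step is essential: without it one picks up spurious contributions from arbitrary basis rotations that need not even remain bounded. Ensuring that the basis-change residuals cleanly blend with the $\mu \ge 1$ estimates and that the constants combine exactly to the claimed prefactors $\sqrt 2 c/\sigma$ and $2(\sqrt 2+1)/\sigma^\mu_{r_\mu}$ will be the technical heart of the argument.
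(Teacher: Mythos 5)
Your proposal is correct and follows essentially the same route as the paper's proof: the orthogonal splitting $P_X-P_Y = (P_X^0-P_Y^0)+\sum_\mu(P_X^\mu-P_Y^\mu)$, polar alignment of the mode bases, telescoping of the $\mu=0$ term, subspace-perturbation bounds of the form $\|P_{\mathcal U^\mu}-P_{\tilde{\mathcal U}^\mu}\|\le \|X-Y\|/\sigma^\mu_{r_\mu}$, the $\sqrt 2$-factor lemmas relating basis and core differences to projector and tensor differences, and the core curvature assumption combined with Proposition~\ref{prop:distequal}. The only cosmetic difference is that you invoke Wedin's $\sin\Theta$ theorem where the paper cites the elementary bound from~\cite[Lemma~A.2]{Bachmayr21}, and the prefactor $2(\sqrt 2+1)$ actually arises from adding the $2/\sigma^\mu_{r_\mu}$ contributions of the $\mu\ge 1$ terms to the $2\sqrt 2/\sigma^\mu_{r_\mu}$ basis-change contributions inside the $\mu=0$ term, rather than from Wedin constants alone.
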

  Note that $\sigma^\mu_{r_\mu}\geq \sigma$ for each $\mu$ as a consequence of Proposition \ref{prop:singvaldistest}. Therefore, we have the simpler estimate 
  \[
  \| P_X - P_Y \|_{\ell_2(\N^d) \to \ell_2(\N^d)} \le \frac{\sqrt{2}c + 2d(\sqrt{2}+1)}{\sigma} \| X - Y \|_{\ell_2(\N^d)}.
  \]
Since on every weakly compact subset~$\mathcal M'\subseteq \mathcal M$ the distance~$\sigma$ to the boundary is bounded from below (recall that $\mathcal{M}$ itself is not weakly closed), we obtain the first curvature estimate in~\ref{property:curvature}.
In the proof of \Cref{prop: curvature estimate}, we use the following lemma.

\begin{lemma}\label{lem: difference of projections}\label{lm: basis inequality}
  Let $U,V \in [\ell_2(\N)]^r$ be orthonormal (that is, $U^\T U = V^\T V = \id$) such that the $r \times r$ matrix $U^\T V$ is symmetric and positive semidefinite.  
  \begin{itemize}
    \item[\upshape{(i)}]
  The corresponding subspace projections $P_{\mathcal U} = U U^\T$ and $P_{\mathcal V} = V V^\T$ satisfy
  \[
  \| U - V \|_{\R^r\to \ell_2(\N)} \le \sqrt{2} \| P_{\mathcal U} - P_{\mathcal V} \|_{\ell_2(\N)\to \ell_2(\N)}.
  \]
  \item[\upshape{(ii)}]
  For all $x,y\in \R^r$, 
  $\|x-y\|\leq\sqrt 2\|Ux-Vy\|_{\ell_2(\N)}$.
  \end{itemize}
\end{lemma}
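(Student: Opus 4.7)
The plan is to reduce both claims to the case where $U^\T V$ is diagonal, and then verify the inequalities by elementary scalar computations.

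First I would use the hypothesis that $S := U^\T V \in \R^{r \times r}$ is symmetric and positive semidefinite to diagonalize it as $S = Q D Q^\T$, with $Q$ orthogonal and $D = \diag(\lambda_1, \dots, \lambda_r)$. The eigenvalues satisfy $\lambda_i \in [0,1]$, since $S$ is PSD and $\|S\| \le \|U\|_{\R^r \to \ell_2(\N)}\|V\|_{\R^r \to \ell_2(\N)} \le 1$. Replacing $U$ by $UQ$ and $V$ by $VQ$ preserves orthonormality of the columns, leaves the projections $P_{\mathcal U}$ and $P_{\mathcal V}$ unchanged, and affects the operator norm of $U - V$ and the quantities $\|U x - V y\|_{\ell_2(\N)}$, $\|x - y\|$ only through the isometry $x \mapsto Q^\T x$, $y \mapsto Q^\T y$ on $\R^r$. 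Hence both claims may be proved under the further assumption $U^\T V = D$, in which case the columns satisfy $\langle u_i, v_j \rangle = \lambda_i \delta_{ij}$.

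For (i), I would then observe that $\{u_i - v_i\}_{i=1}^r$ is an orthogonal family in $\ell_2(\N)$ with $\|u_i - v_i\|^2 = 2 - 2 \lambda_i$, so that $\|U - V\|_{\R^r \to \ell_2(\N)}^2 = 2(1 - \lambda_{\min})$, where $\lambda_{\min} = \min_i \lambda_i$. A principal-angle computation (equivalently, restricting to the at most $2r$-dimensional space spanned by $\mathcal U \cup \mathcal V$ and working in each two-dimensional block $\spn\{u_i, v_i\}$) yields $\|P_{\mathcal U} - P_{\mathcal V}\|_{\ell_2(\N) \to \ell_2(\N)}^2 = 1 - \lambda_{\min}^2$. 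The elementary inequality $1 - \lambda \le 1 - \lambda^2$ for $\lambda \in [0,1]$ then gives (i) with constant $\sqrt 2$.

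For (ii), I would expand in the diagonal basis to obtain
\[
2 \|U x - V y\|_{\ell_2(\N)}^2 - \|x - y\|^2 = \sum_{i=1}^r \bigl[ x_i^2 + (2 - 4 \lambda_i) x_i y_i + y_i^2 \bigr],
\]
and note that each summand is a quadratic form in $(x_i, y_i)$ with discriminant $(2 - 4 \lambda_i)^2 - 4 = -16 \lambda_i (1 - \lambda_i) \le 0$ for $\lambda_i \in [0,1]$; hence every term is nonnegative and (ii) follows. The only nontrivial step is the diagonal reduction: it crucially uses the PSD hypothesis on $U^\T V$ (to produce both an orthogonal $Q$ and nonnegative $\lambda_i$) together with the invariance of all the relevant operator norms under simultaneous right-multiplication by $Q$. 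Once this is in place, the constant $\sqrt 2$ in both parts drops out of the scalar trade-off $(1 - \lambda)$ vs.\ $(1 - \lambda^2)$ in (i) and of the discriminant bound in (ii).
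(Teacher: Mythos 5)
Your proposal is correct and follows essentially the same route as the paper: reduce to the case of a diagonal, entrywise-in-$[0,1]$ matrix $U^\T V$ using the PSD hypothesis, then verify both inequalities by elementary computations on the two-dimensional blocks $\spn\{u_i,v_i\}$ (for (i), the comparison of $2(1-\lambda_{\min})$ with $1-\lambda_{\min}^2$; for (ii), coordinatewise nonnegativity of $2\|Ux-Vy\|^2-\|x-y\|^2$). The only cosmetic difference is in (ii), where you check nonnegativity of each $2\times 2$ quadratic form via its discriminant while the paper splits $\id = \Sigma + (\id-\Sigma)$ and applies $(a-b)^2\le 2a^2+2b^2$; these are equivalent.
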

  \begin{proof}
    After orthogonal change of basis, we may assume the matrix $U^\T V = \Sigma$ to be diagonal with entries $1\geq \sigma_i\geq 0$, that is $u_j^\T v_i = \sigma_i \delta_{ij}$.

    Ad (i).
    We define the spaces $W_i = \spa\{u_i,v_i\}$ for $i=1,\ldots, r$. These are pairwise orthogonal. Furthermore, let $W_{r+1} = \left(\bigoplus_{i=1}^r W_i \right)^\perp$. Then the difference of projections is block-diagonal with respect to the spaces $W_i$, that is, $(P_{\mathcal U} -P_{\mathcal V}) (x u_i + y v_i)= (x+\sigma_i y) u_i - (y+\sigma_i x) v_i$. Therefore, the operator norm is given by
    \[
      \|P_{\mathcal U}-P_{\mathcal V}\|_{\ell_2(\N)\to \ell_2(\N)} = \max_i \max_{x\neq 0\neq y}\frac{1}{\|x u_i + y v_i\|_{\ell_2(\N)}}
      \|(P_{\mathcal U} -P_{\mathcal V}) (x u_i + y v_i)\|_{\ell_2(\N)}.
    \]
    We note the norm equality $\|x u_i + y v_i\|^2_{{\ell_2(\N)}} = x^2 +y^2 +2 \sigma_i xy$. Then on the one hand,
    \begin{align*}
      \|(P_{\mathcal U} -P_{\mathcal V}) (x u_i + y v_i)\|^2_{\ell_2(\N)}
      &=
      (x+\sigma_i y )^2 + (y+\sigma_i x)^2 - 2 \sigma_i(x+\sigma_i y)(y+\sigma_i x)\\
      &=
      x^2+y^2 +2\sigma_i xy -\sigma_i^2 (x^2+y^2+2\sigma_ixy)\\
      &=
      (1-\sigma_i^2)\|x u_i + y v_i\|^2_{\ell_2(\N)}\\
      & \geq (1- \sigma_i) \|x u_i + y v_i\|^2_{\ell_2(\N)},
    \end{align*}
that is, $\|P_{\mathcal U}-P_{\mathcal V}\|^2_{\ell_2(\N)\to \ell_2(\N)}\geq 1- \sigma_i$. On the other hand, we have 
\[
    (U-V)^\T(U-V)  = 2 (\id_r -\Sigma),
\]
and thus $\max_{\norm{w}=1}\| (U - V)w \|^2_{\ell_2(\N)} = 2 \max_i (1-\sigma_i)$, which leads to the desired inequality.

Ad (ii).
 Using inequality $(a-b)^2 \leq 2 a^2 +2b^2$ componentwise, we get
  \begin{align*}
    \|x-y\|^2 
    &= (x-y)^\T \Sigma (x-y) + (x-y)^\T (\id_r-\Sigma )(x-y)\\
    &\leq (x-y)^\T \Sigma (x-y) + 2x^\T (\id_r-\Sigma )x + 2y^\T (\id_r-\Sigma )y\\
    &\leq 2\|x\|^2+2\|y\|^2- 4 x^\T\Sigma y\\
    &= 2 \|Ux\|_{\ell_2(\N)} + 2 \|Vy\|_{\ell_2(\N)} - 4 (Ux)^\T (Vy) = 2\|Ux-Vy\|^2_{\ell_2(\N)},
  \end{align*}
  which is the claim.
\end{proof}

\begin{proof}[Proof of Proposition~\ref{prop: curvature estimate}]
Assume representations 
\[ X = C \times_1 U_1 \times_2 \dots \times_d U_d, \quad Y = \tilde C \times_1 \tilde U_1 \times_2 \dots \times_d \tilde U_d 
\] 
as in Proposition~\ref{prop: tangent space projection}. By using polar decompositions $(U^\mu)^\T \tilde U^\mu = Q^\mu S^\mu$, where $Q^\mu$ is orthogonal and $S^\mu$ is positive semidefinite, we can replace the $U^\mu$ with $U^\mu Q^\mu$ and the core tensor $C$ accordingly such that $(U^\mu)^\T \tilde U^\mu$ is positive semidefinite, which we assume to be the case for all $\mu = 1,\dots,d$.
By Proposition~\ref{prop: tangent space projection},
\[
P_X - P_Y = P_X^0 - P_Y^0 + \sum_{\mu=1}^d P_X^\mu - P_Y^\mu.
\]
We will estimate the single differences separately. Applying the triangle inequality will then prove the assertion.

We first consider any of the projector differences $P_X^\mu - P_Y^\mu$ for $\mu = 1,\dots,d$. By~\eqref{eq: matricization projectors}, they can be written in the $\mu$-th matricization space as
\begin{align*}
P_X^\mu - P_Y^\mu &= (\id - P_{\mathcal U^\mu}) \otimes P_{\mathcal V^\mu} - (\id - P_{\tilde{\mathcal U}^\mu}) \otimes P_{\tilde{\mathcal V}^\mu} \\
&= (\id - P_{\mathcal U^\mu}) \otimes (P_{\mathcal V^\mu} - P_{\tilde{\mathcal V}^\mu}) + (P_{\tilde{\mathcal U}^\mu} - P_{\mathcal U^\mu}) \otimes P_{\tilde{\mathcal V}^\mu}.
\end{align*}
We have
\begin{equation}\label{eq: projector difference}
\| P_{\tilde{\mathcal U}^\mu} - P_{\mathcal U^\mu} \|_{\ell_2(\N) \to \ell_2(\N)} \le \frac{1}{\sigma_{r_\mu}^\mu} \| M_X^\mu - M_Y^\mu \|_{\ell_2(\N^{d-1})\to \ell_2(\N)} \le \frac{1}{\sigma_{r_\mu}^\mu} \| X - Y \|_{\ell_2(\N^d)},
\end{equation}
where again $M_X^\mu$ and $M_Y^\mu$ denote the matricizations of $X$ and $Y$ and $\sigma^\mu_{r_\mu}= \sigma^\mu_{r_\mu}(X)$ denotes the smallest positive singular value of $M_X^\mu$ as in \eqref{eq:singvalX}.  For the first inequality see, for example, the proof of~\cite[Lemma~A.2]{Bachmayr21}, the second one is trivial. The same upper bound holds for $\| P_{\tilde{\mathcal V}^\mu} - P_{\mathcal V^\mu} \|_{\ell_2(\N^{d-1}) \to \ell_2(\N^{d-1})}$. 
 Thus we conclude
\[
\| P_X^\mu - P_Y^\mu \|_{\ell_2(\N^d) \to \ell_2(\N^d)} \le \frac{2}{\sigma} \| X - Y \|_{\ell_2(\N^d)}\,.
\]

We now proceed with estimating the operator norm of the difference $P_X^0 - P_Y^0$. By~\eqref{eq: projector P0},
\begin{align*}
(P_X^0 - P_Y^0)(Z) &= P_C(C_Z)\times_1 U^1 \times_2 \dots \times_d U^d - P_{\tilde C}(\tilde C_Z)\times_1 \tilde U^1 \times_2 \dots \times_d \tilde U^d \\
&= [P_C(C_Z) - P_{\tilde C}(\tilde C_Z)] \times_1 \tilde U^1 \times_2 \dots \times_d \tilde U^d \\
&\qquad {}+{} P_{C}(C_Z) \times_1 [U^1 - \tilde U^1] \times_2 \tilde U^2 \times_3  \cdots \times_d \tilde U^d \\
&\qquad \, \, \, {}\vdots{} \\
&\qquad {}+{} P_{C}(C_Z) \times_1 U^1 \times_2 \dots \times_{d-1} U^{d-1} \times_d [U^d - \tilde U^d]
\end{align*}
where $C_Z = Z \times_1 (U^1)^T \times_2 \dots \times_d (U^d)^T$ and similar for ${\tilde C}_Z$. The first term in the right sum is bounded by
\[
\| P_C(C_Z) - P_{\tilde C}(\tilde C_Z) \|_{\ell_2(\N^d)} 
\le 
\| (P_C - P_{\tilde C})C_Z \| + \| C_Z - \tilde C_Z \|,
\] 
since $U^\mu$ and $\tilde U^\mu$ have orthonormal columns and hence spectral norm one. For the other terms we use \Cref{lem: difference of projections}\upshape{(i)} and~\eqref{eq: projector difference}, which leads to
\begin{multline*}
\| P_{C}(C_Z) \times_1 [U^1 - \tilde U^1] \times_2 \tilde U^2 \times_3  \cdots \times_d \tilde U^d \|_{\ell_2(\N^d)} \le \sqrt{2} \|  P_{\mathcal U^1} - P_{\tilde{\mathcal U}^1} \|_{\ell_2(\N) \to \ell_2(\N)}   \| P_{C}(C_Z) \| \\
\le \frac{\sqrt{2}}{\sigma_{r^1_1}(X)} \|X-Y\|_{\ell_2(\N^d)}\| C_Z \| \le \frac{\sqrt{2}}{\sigma_{r_1}^1} \|X-Y\|_{\ell_2(\N^d)}\| Z \|_{\ell_2(\N^d)} 
\end{multline*}
and we proceed similarly for the further modes. So far we have shown
\begin{multline*}
  \| (P_X^0 - P_Y^0)(Z) \|_{\ell_2(\N^d)} 
  \le 
  \| (P_C - P_{\tilde C})C_Z \| + \| C_Z - \tilde C_Z \|  \\
  +\sqrt{2}\left( \frac{1}{\sigma_{r_1}^1} + \dots + \frac{1}{\sigma_{r_d}^d} \right) \|X-Y\|_{\ell_2(\N^d)}\| Z \|_{\ell_2(\N^d)}.  
\end{multline*}
It remains to estimate $\| C_Z - \tilde C_Z \|$ and $\| (P_C - P_{\tilde C})C_Z \|$. Using again a telescopic expansion of
\[
C_Z - \tilde C_Z = Z \times_1 (U^1)^\T \times_2 \dots \times_d (U^d)^\T - Z\times_1 (\tilde U^1)^\T \times_2 \dots \times_d (\tilde U^d)^T ,
\]
one obtains in a similar way as above that
\[
\| C_Z - \tilde C_Z \| \le \sqrt{2}\left( \frac{1}{\sigma_{r_1}^1} + \dots + \frac{1}{\sigma_{r_d}^d} \right)\|X-Y\|_{\ell_2(\N^d)} \| Z \|_{\ell_2(\N^d)}.
\]
We need to bound $\| (P_C - P_{\tilde C})C_Z \|$ in terms of $\|X-Y\|_{\ell_2(\N^d)}$. Note that
\[
X-Y =  C \times_1 U_1 \times_2 \dots \times_d U_d- \tilde C \times_1 \tilde U_1 \times_2 \dots \times_d \tilde U_d
\]
where $U_\mu$ and $\tilde U_\mu$ satisfy the assumptions in \Cref{lm: basis inequality}. It follows that $\|C-\tilde C\| \leq \sqrt 2 \|X-Y\|_{\ell_2(\N^d)}$. Hence by \eqref{eq:Mccurvest} and \Cref{prop:distequal} we have
\[
  \| (P_C - P_{\tilde C})C_Z \| 
  \leq 
  \frac{c}{\sigma} \|C-\tilde C\| \|C_Z\| \leq \frac{\sqrt 2 c}{\sigma} \|X-Y\|_{\ell_2(\N^d)} \|Z\|_{\ell_2(\N^d)}.
\]
In total, we have 
\[
  \| (P_X^0 - P_Y^0)(Z) \|_{\ell_2(\N^d)} 
  \le 
  \sqrt{2}\left(\frac{c}{\sigma}+ \frac{2}{\sigma_{r_1}^1} + \dots + \frac{2}{\sigma_{r_d}^d} \right) \|X-Y\|_{\ell_2(\N^d)}\| Z \|_{\ell_2(\N^d)}.  
\]
In summary, this allows to conclude the asserted curvature estimate.
\end{proof}

In Assumption~\ref{property:curvature} we also need an estimate for the projection $\id - P_X$.

\begin{proposition}\label{prop: curvatur estimate 2}
  Assume a curvature estimate of the form
\[
\| (\id - P_{\tilde C})(C-\tilde C) \| \le \frac{c}{\hat\sigma} \| C - \tilde C \|^2
\quad\text{for all $C\in \mathcal{M}_\mathrm{c}$ and $\tilde C\in\overline{\mathcal{M}_\mathrm{c}}$,}
\]
where $\hat\sigma = \dist(C,\overline{\mathcal{M}_\mathrm{c}} \setminus \mathcal{M}_\mathrm{c})$ and where $c>0$ is independent of $C, \tilde C$.
  Let $X,Y \in \mathcal M$ with corresponding tangent space projections $P_X$ and~$P_Y$. Then
  \[
  \| (\id - P_X)(X-Y) \|_{\ell_2(\N^d)}\le \sqrt{\frac{c^2}{\sigma^2}+\sum_{\mu = 1}^d \frac{1}{{(\sigma^\mu_{r_\mu})^2}}}  
  \| X - Y \|_{\ell_2(\N^d)}^2
  \le \frac{\sqrt{d+c^2}}{\sigma}\| X - Y \|_{\ell_2(\N^d)} ^2
  \]
  where $\sigma =  \dist(X,\overline{\mathcal M}^w \setminus \mathcal M)$.
\end{proposition}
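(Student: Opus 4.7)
The plan is to decompose $(\id - P_X)(X-Y)$ orthogonally according to the $2^d$-fold tensor-product decomposition of $\ell_2(\N^d)$ induced by the splittings $\ell_2(\N) = \mathcal U^\mu \oplus (\mathcal U^\mu)^\perp$, bound each piece, and combine by Pythagoras. For $S\subseteq\{1,\ldots,d\}$, set $\mathcal H_S = \bigotimes_\mu E^\mu_S$ with $E^\mu_S = (\mathcal U^\mu)^\perp$ if $\mu\in S$ and $E^\mu_S = \mathcal U^\mu$ otherwise, and let $\pi_S$ denote the orthogonal projection onto $\mathcal H_S$. The cone property of $\mathcal M_\mathrm c$ contained in~\eqref{eq: invariance Mhat} (scalar multiples) lifts to $\mathcal M$, giving $X\in T_X\mathcal M$ and hence $(\id - P_X)X = 0$, so it suffices to bound $\|(\id - P_X)Y\|$. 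Using Proposition~\ref{prop: tangent space projection} together with the inclusion $\mathcal V^\mu\subseteq \bigotimes_{\nu\neq \mu}\mathcal U^\nu$, the summands $P_X^0, P_X^1,\ldots,P_X^d$ have pairwise orthogonal ranges contained respectively in $\mathcal H_\emptyset$ and the $\mathcal H_{\{\mu\}}$'s, with $P_X^0 = P_X^0\pi_\emptyset$ and $P_X^\mu = P_X^\mu\pi_{\{\mu\}}$. Pythagoras across the $\mathcal H_S$-decomposition therefore yields
\[
\|(\id - P_X)Y\|^2 = \|(\id - P_X^0)\pi_\emptyset Y\|^2 + \sum_{\mu=1}^d\|\pi_{\{\mu\}}Y - P_X^\mu\pi_{\{\mu\}}Y\|^2 + \sum_{|S|\ge 2}\|\pi_S Y\|^2.
\]

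For the first (core) summand I would represent $\pi_\emptyset Y = C_Y \times_1 U^1 \times_2 \cdots \times_d U^d$ with $C_Y = Y\times_1 (U^1)^\T\times_2\cdots\times_d(U^d)^\T$, which lies in $\overline{\mathcal M_\mathrm c}$ by~\Cref{lm: closureManifold}. Since $(\id - P_X^0)X = 0$ (cone property applied to $\mathcal M_\mathrm c$) and the $U^\mu$ are orthonormal, this term reduces to $\|(\id - P_C)(C - C_Y)\|$ in the core space. Combining the assumed curvature estimate on $\mathcal M_\mathrm c$ with \Cref{prop:distequal} (so that $\hat\sigma = \sigma$) and the trivial bound $\|C - C_Y\| \le \|X - Y\|$ produces a contribution of at most $(c/\sigma)\|X-Y\|^2$.

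For the two remaining sums I would pass to the $\mu$-th matricization and set $N_\mu := \|(I - P_{\mathcal U^\mu})M^\mu_Y(I - P_{\mathcal V^\mu})\|^2$. Because $(I - P_{\mathcal V^\mu})$ annihilates $\mathcal V^\mu$ but acts as the identity on $(\bigotimes_{\nu\neq\mu}\mathcal U^\nu)^\perp$, a direct verification gives
\[
N_\mu = \|\pi_{\{\mu\}}Y - P_X^\mu\pi_{\{\mu\}}Y\|^2 + \sum_{S\ni\mu,\,|S|\ge 2}\|\pi_S Y\|^2,
\]
and summing over $\mu$ (noting each $\pi_S Y$ with $|S|\ge 2$ is counted $|S|\ge 2$ times) shows that $\sum_\mu N_\mu$ dominates the mode-plus-higher-$|S|$ contribution. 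To bound each $N_\mu$, I would invoke the matrix-case argument already used in the proof of~\Cref{prop: curvature estimate}: a Wedin-type sine-$\Theta$ bound gives $\|(I - P_{\mathcal U^\mu})\tilde U^\mu\|_{\ell_2(\N)\to\ell_2(\N)} \le \|X-Y\|/\sigma^\mu_{r_\mu}$ for the left singular basis $\tilde U^\mu$ of $M^\mu_Y$, and the identity $M^\mu_Y(I - P_{\mathcal V^\mu}) = (M^\mu_Y - M^\mu_X)(I - P_{\mathcal V^\mu})$ (using $M^\mu_X P_{\mathcal V^\mu} = M^\mu_X$) supplies the second factor of $\|X-Y\|$, whence $N_\mu \le \|X-Y\|^4/(\sigma^\mu_{r_\mu})^2$.

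Summing the core bound with $\sum_\mu N_\mu$ and taking a square root yields the first claimed estimate; the simpler second form is immediate from $\sigma^\mu_{r_\mu}\ge\sigma$ by~\Cref{prop:singvaldistest}. I expect the main technical obstacle to be the bookkeeping in the $\mathcal H_S$-decomposition and the associated counting step that collapses the tensor-case Pythagorean sum into $\sum_\mu N_\mu$ (in particular, the careful use of $\mathcal V^\mu \subsetneq \bigotimes_{\nu\neq\mu}\mathcal U^\nu$); once the identity for $N_\mu$ is in place, the second-order matrix bound on each $N_\mu$ is essentially the same argument that was already used for the matricization terms of~\Cref{prop: curvature estimate}.
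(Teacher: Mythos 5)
Your proof is correct and reproduces exactly the stated constants, but it takes a genuinely different route through the bookkeeping. The paper decomposes the identity into the $d+1$ mutually orthogonal telescoping projections onto $\mathcal U^1\otimes\cdots\otimes\mathcal U^d$, $(\mathcal U^1)^\perp\otimes\mathcal U^2\otimes\cdots\otimes\mathcal U^d$, $\ell_2(\N)\otimes(\mathcal U^2)^\perp\otimes\cdots\otimes\mathcal U^d$, and so on, pairs the $\mu$-th piece with $P_X^\mu$, and obtains the quadratic bound for each mode by comparing $P_{\mathcal V^\mu}$ with the corresponding subspace of the auxiliary tensor $\id_{\ell_2(\N)}\otimes P_{\mathcal U^2\otimes\cdots\otimes\mathcal U^d}Y\in\overline{\mathcal M}^w$ — which forces a case distinction (and a limiting argument) according to whether that auxiliary tensor lies in $\mathcal M$. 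Your refinement into the $2^d$ subspaces $\mathcal H_S$, the reduction to $(\id-P_X)Y$ via the cone property, and the quantities $N_\mu$ — bounded through $M^\mu_Y(I-P_{\mathcal V^\mu}) = (M^\mu_Y-M^\mu_X)(I-P_{\mathcal V^\mu})$ and $\|(I-P_{\mathcal U^\mu})P_{\tilde{\mathcal U}^\mu}\|\le\|X-Y\|_{\ell_2(\N^d)}/\sigma^\mu_{r_\mu}$ from \eqref{eq: projector difference} — avoid both the auxiliary tensor and the continuity argument; the price is the counting step, which you carry out correctly since each $\pi_SY$ with $\abs{S}\ge 2$ appears in $N_\mu$ for every $\mu\in S$, hence at least once. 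The core term is treated identically in both proofs (same use of \Cref{lm: closureManifold}, \Cref{prop:distequal} and the assumed bound on $\mathcal M_\mathrm c$); note only that you, exactly like the paper's own proof and like what \Cref{prop: TT curvature} actually supplies, read the hypothesis with the tangent projection taken at the point $C\in\mathcal M_\mathrm c$ from which $\hat\sigma$ is measured, which is clearly the intended interpretation of the statement.
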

\begin{proof}
  We use the same notation as in the proof of \Cref{prop: curvature estimate}.
  We decompose the identity into
  \begin{equation}\label{eq:sumdecomposition}
    \id = P_{\mathcal U^1\otimes \dots\otimes \mathcal U^d}
     + P_{(\mathcal U^1)^\perp\otimes \dots\otimes \mathcal U^d} 
     + P_{\mathcal \ell_2(\N)\otimes(\mathcal U^2)^\perp \dots\otimes \mathcal U^d} 
     + \ldots
     + P_{\mathcal \ell_2(\N^{d-1})\otimes(\mathcal U^d)^\perp}.
  \end{equation}
  Then 
  \begin{multline*}
    (\id - P_X)(X-Y)
    =(P_{\mathcal U^1\otimes \dots\otimes \mathcal U^d}-P^0_X)(X-Y)
    + (P_{ (\mathcal U^1)^\perp\otimes \dots\otimes \mathcal U^d} - P^1_X )(X-Y)\\
    \quad + (P_{\mathcal \ell_2(\N)\otimes(\mathcal U^2)^\perp \otimes \dots\otimes \mathcal U^d} - P^2_X )(X-Y)
    + \ldots
    + (P_{\mathcal \ell_2(\N^{d-1})\otimes(\mathcal U^d)^\perp}- P^d_X)(X-Y)
  \end{multline*}
  holds. For the first summand, we have
  \[
    (P_{\mathcal U^1\otimes \dots\otimes \mathcal U^d}-P^0_X)(X-Y)
    =
    \left((\id-P_C)(X-Y) \times_1 (U^1)^\T \times_2 \dots \times_d (U^d)^\T \right)\times_1 U^1 \times_2 \dots \times_d U^d
  \]
  where $\|(X-Y) \times_1 (U^1)^\T \times_2 \dots \times_d (U^d)^\T\|\leq \|X-Y\|_{\ell_2(\N^d)}$. Since $\dist(X,\overline{\mathcal M}^w \setminus \mathcal M) = \dist(C,\overline{\mathcal{M}_\mathrm{c}} \setminus \mathcal{M}_\mathrm{c})$ by \Cref{prop:distequal}, we have
  \[
    \|(P_{\mathcal U^1\otimes \dots\otimes \mathcal U^d}-P^0_X)(X-Y)\|_{\ell_2(\N^d)}
    \leq
    \frac{c}{\sigma}\|X-Y\|^2_{\ell_2(\N^d)},
  \]
  where we use that $Y \times_1 (U^1)^\T \times_2 \dots \times_d (U^d)^\T \in \overline{\mathcal{M}_\mathrm{c}}$.
  For the next summand, we obtain
  \[
  (P_{ (\mathcal U^1)^\perp\otimes \dots\otimes \mathcal U^d} - P^1_X )(X-Y)
  =
  P_{(\mathcal U^1)^\perp}\otimes (\id_{\ell_2(\N^{d-1})} - P_{\mathcal V^1} )(X- \id_{\ell_2(\N)}\otimes P_{ \mathcal U^2\otimes \dots\otimes \mathcal U^d} Y).
  \]
  Then, $\|X- \id_{\ell_2(\N)}\otimes P_{ \mathcal U^2\otimes \dots\otimes \mathcal U^d} Y\|_{\ell_2(\N^d)}\leq \|X- Y\|_{\ell_2(\N^d)}$ and $\id_{\ell_2(\N)}\otimes P_{ \mathcal U^2\otimes \dots\otimes \mathcal U^d} Y\in \overline{\mathcal M}^w$. If $\id_{\ell_2(\N)}\otimes P_{ \mathcal U^2\otimes \dots\otimes \mathcal U^d} Y\in \mathcal M$, then we have corresponding spaces $\tilde{\mathcal U}^1$ and $\tilde{\mathcal V}^1$ and  
  \[
    \|P_{\mathcal V^1}-P_{\tilde{\mathcal V}^1}\|_{\ell_2(\N^d)\to {\ell_2(\N^d)}}
    \leq\frac{1}{\sigma_{r_1}^1} \|X-\id_{\ell_2(\N)}\otimes P_{ \mathcal U^2\otimes \dots\otimes \mathcal U^d}Y\|_{\ell_2(\N^d)} 
    \leq\frac{1}{\sigma_{r_1}^1} \|X-Y\|_{\ell_2(\N^d)} .
  \]
  As a consequence,
  \[
    \|(P_{ (\mathcal U^1)^\perp\otimes \dots\otimes \mathcal U^d} - P^1_X )(X-Y)\|_{\ell_2(\N^d)}
    \leq \frac{1}{\sigma_{r_1}^1}\|X-Y\|_{\ell_2(\N^d)}^2.
  \]
  If $\id_{\ell_2(\N)}\otimes P_{ \mathcal U^2\otimes \dots\otimes \mathcal U^d} Y\notin \mathcal M$, the same estimate follows by continuity of the linear operator $P_{ (\mathcal U^1)^\perp\otimes \dots\otimes \mathcal U^d} - P^1_X $.
  Similar considerations show
  \[
    \|(P_{\ell_2(\N^{\mu-1})\otimes (\mathcal U^{\mu})^\perp\otimes \dots\otimes \mathcal U^d} - P^{\mu}_X )(X-Y)\|_{\ell_2(\N^d)}
    \leq \frac{1}{\sigma_{r_\mu}^\mu}\|X-Y\|_{\ell_2(\N^d)}^2 
  \]
  for $\mu=2,\ldots,d$.
  Finally, we obtain
  \[
    \| (\id - P_X)(X-Y) \|_{\ell_2(\N^d)}  \le \sqrt{\frac{c^2}{\sigma^2}+\sum_{\mu = 1}^d \frac{1}{{(\sigma^\mu_{r_\mu})^2}}}  \| X - Y \|^2_{\ell_2(\N^d)}
  \]
  utilizing that the images of the operators appearing in \eqref{eq:sumdecomposition} are orthogonal. The final inequality follows with \Cref{prop:distequal}.
\end{proof}

\subsection{Application to tensor train manifolds}

In order to give the above estimates a more concrete meaning, we now consider the popular example of the fixed-rank tensor-train~(TT) format discussed in the introduction. Here $\mathcal{M}_\mathrm{c}= \mathcal{M}_\bfk$ consists of all finite dimensional tensors $C\in \R^{r_1\times\dots\times r_d}$ with the fixed TT rank $\bfk$ of the form~\eqref{eq: TT-format}. 
We denote the resulting manifold $\mathcal M$ in \eqref{eq: Tucker format} by $\mathcal M_{\bfr,\bfk}$. It thus contains infinite tensors in $\ell_2(\N^d)$ of ``outer'' multilinear rank $\bfr = (r_1,\dots,r_d)$ and ``inner'' TT rank $\bfk = (k_1,\dots, k_{d-1})$.
This can be seen as a special case of the hierarchical tensor format with linear dimension tree, see \cite[Rem.~2.27]{Bachmayr:23}.

\begin{proposition}{\label{prop: SubspaceTTCurvature}}
  Let $X,Y \in \mathcal M_{\bfr,\bfk}$ with corresponding tangent space projections $P_X$ and~$P_Y$. Then
  \[
    \begin{aligned}
  \| P_X - P_Y \|_{\ell_2(\N^d) \to \ell_2(\N^d)} &\le \frac{2d(3\sqrt{2}+1)}{\sigma} \| X - Y \|_{\ell_2(\N^d)},  \\
  \| (\id - P_X)(X-Y) \|_{\ell_2(\N^d)}  &\le \frac{\sqrt{2d- 1}}{\sigma}\| X - Y \|^2_{\ell_2(\N^d)} ,
    \end{aligned}
  \]
  where $\sigma =  \dist(X,\overline{\mathcal M_{\bfr,\bfk}}^w \setminus \mathcal M_{\bfr,\bfk})$.
\end{proposition}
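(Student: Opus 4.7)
The plan is to derive both estimates as direct consequences of Proposition~\ref{prop: curvature estimate} and Proposition~\ref{prop: curvatur estimate 2}, specialized to the case $\mathcal{M}_\mathrm{c} = \mathcal{M}_\bfk$, the finite-dimensional manifold of tensors with fixed TT rank $\bfk$. Once an inner curvature constant $c = c(d)$ is supplied for $\mathcal M_\bfk$, the simplified bound in the remark following Proposition~\ref{prop: curvature estimate} and the final inequality of Proposition~\ref{prop: curvatur estimate 2}, together with the lower bound $\sigma^\mu_{r_\mu}\geq \sigma$ from Proposition~\ref{prop:singvaldistest}, immediately yield estimates of the desired shape. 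Specifically, they give
\[
\|P_X-P_Y\|_{\ell_2(\N^d)\to\ell_2(\N^d)} \le \frac{\sqrt 2\,c + 2d(\sqrt 2+1)}{\sigma}\|X-Y\|_{\ell_2(\N^d)}
\]
and
\[
\|(\id-P_X)(X-Y)\|_{\ell_2(\N^d)} \le \frac{\sqrt{d+c^2}}{\sigma}\|X-Y\|_{\ell_2(\N^d)}^2,
\]
and it only remains to plug in the correct $c$ to recover the stated constants $2d(3\sqrt 2+1)$ and $\sqrt{2d-1}$.

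To obtain the required curvature bounds for $\mathcal{M}_\bfk$, I would proceed by induction on the mode count $d$. The base case $d=2$ reduces to matrices of fixed rank $k_1$, for which the corresponding curvature estimates are classical. For the induction step, the key observation is that $\mathcal M_\bfk$ itself fits the template of Propositions~\ref{prop: curvature estimate} and~\ref{prop: curvatur estimate 2}: after the first matricization, a TT tensor with ranks $(k_1,\ldots,k_{d-1})$ is a rank-$k_1$ matrix whose right factor, reshaped back, is a TT tensor with one fewer mode and ranks $(k_2,\ldots,k_{d-1})$, so the inductive hypothesis feeds back into the same propositions. Since the first bound of Proposition~\ref{prop: curvature estimate} accumulates additively while the quadratic decomposition in the proof of Proposition~\ref{prop: curvatur estimate 2} accumulates in $\ell_2$, this recursion produces a first-order constant $c = O(d)$ and a second-order constant $c = O(\sqrt d)$. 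Combined with Proposition~\ref{prop:distequal} to identify the distance to the boundary at each level with the smallest matricization singular value, the induction closes.

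The main obstacle will be the bookkeeping needed to obtain the \emph{exact} numerical constants $3\sqrt 2+1$ and $\sqrt{2d-1}$ rather than merely bounds of the right order. Each level of the recursion introduces contributions of the form $2(\sqrt 2+1)/\sigma_{\min}$ on top of the inherited constant from Proposition~\ref{prop: curvature estimate}, and a corresponding single term $1/\sigma_{\min}^2$ under the root in Proposition~\ref{prop: curvatur estimate 2}; making sure that after $d-1$ nested applications these add up precisely to $2d(3\sqrt 2+1)$ and $\sqrt{2d-1}$ requires setting up the telescoping carefully and using $\sigma^\mu_{r_\mu}\geq \sigma$ consistently. An alternative, which would shortcut this bookkeeping, is to invoke existing finite-dimensional curvature estimates for TT manifolds from the literature on dynamical low-rank approximation and then apply Propositions~\ref{prop: curvature estimate} and~\ref{prop: curvatur estimate 2} only once.
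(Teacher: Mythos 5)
Your outer reduction is exactly the paper's: Proposition~\ref{prop: SubspaceTTCurvature} is obtained by feeding curvature constants for the finite-dimensional TT manifold $\mathcal M_\bfk$ into Propositions~\ref{prop: curvature estimate} and~\ref{prop: curvatur estimate 2}, and your target constants are consistent with that (with inner constants $c=4d$ and $c=\sqrt{d-1}$ one gets $\sqrt 2\cdot 4d+2d(\sqrt 2+1)=2d(3\sqrt 2+1)$ and $\sqrt{d+(d-1)}=\sqrt{2d-1}$). The gap is in how you propose to produce those inner constants. First, the recursion you describe does not fit the template of Propositions~\ref{prop: curvature estimate} and~\ref{prop: curvatur estimate 2}: those propositions treat manifolds of the form $C\times_1 U^1\cdots\times_d U^d$ with the \emph{core} constrained to $\mathcal M_{\mathrm c}$ and the factors $U^\mu$ free, whereas the TT unfolding $M_X^{\{1\}}=U^1(V^1)^\T$ places the constraint on the \emph{factor} $V^1$ (whose reshaping is a shorter TT tensor). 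So the inductive hypothesis cannot simply be "fed back into the same propositions"; you would need new analogues adapted to a constrained-factor split. Second, and more seriously, even granting such analogues with the same constants, the first-order recursion would read roughly $c_d=\sqrt 2\,c_{d-1}+\mathrm{const}$, because Proposition~\ref{prop: curvature estimate} multiplies the inherited inner constant by $\sqrt 2$. This compounds to $c_d=O((\sqrt 2)^d)$, not $O(d)$ as you claim, so the route cannot recover the linear-in-$d$ constant $2d(3\sqrt 2+1)$. (The second-order recursion, being additive in squares, would indeed give $O(\sqrt d)$, so that half is plausible.)

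The paper avoids the recursion entirely: in the appendix (Proposition~\ref{prop: TT curvature}) it proves $\max_{\|Z\|=1}\|(P_X-P_Y)Z\|\le \frac{4d}{\sigma}\|X-Y\|$ and $\|(\id-P_X)(X-Y)\|\le\frac{\sqrt{d-1}}{\sigma}\|X-Y\|^2$ directly, using the explicit decomposition $P_X=P_1^X+\dots+P_d^X$ of the TT tangent projector into terms built from the nested left subspaces $\mathcal U_{\{1,\dots,\mu\}}$ and right subspaces $\mathcal V_{\{\mu+1,\dots,d\}}$. Each difference $P_\mu^X-P_\mu^Y$ is bounded by $4/\sigma\,\|X-Y\|$ via the singular-value perturbation bound~\eqref{eq: projector difference} applied to all four subspace projectors involved, and the $\sqrt{d-1}$ (rather than $d-1$) in the second estimate comes from the fact that the $d-1$ residual terms land in mutually orthogonal subspaces, so their norms add in $\ell_2$. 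If you want to complete your proof, you should replace the induction by this direct argument (or prove the constrained-factor analogues without the $\sqrt 2$ loss); your fallback of citing the literature does not work either, since the paper notes these refined finite-dimensional estimates appear to be new.
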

The result follows directly follows from Propositions~\ref{prop: curvature estimate} and~\ref{prop: curvatur estimate 2} and the following refined curvature estimates for the finite-dimensional TT manifold $\mathcal M_\bfk$, which under the given assumptions seem to be new.
\begin{proposition}\label{prop: TT curvature}
  Let $\mathcal M_{\bfk}\subset{\R^{N_1\times\dots\times N_d}}$ be a finite-dimensional TT manifold of fixed TT rank $\bfk$. 
  Let $X,Y\in {\mathcal M}_{\bfk}$ and $\sigma = \dist(X,\overline{{\mathcal M}}_\bfk\setminus{\mathcal M}_{\bfk})$. Then
  \[
 \max_{\| Z\| = 1} \|(P_X-P_Y) Z\| \leq \frac{4d}{\sigma}\|X-Y\|,\quad 
  \|(\id-P_X)(X-Y)\| \leq \frac{\sqrt{d-1}}{\sigma}\|X-Y\|^2
  \]
and
  \[
  \|(\id-P_X)(X-Y)\|  \leq \frac{\sqrt{d-1}}{\sigma}\|X-Y\|^2.
  \]
Furthermore, the last inequality holds more generally for $Y\in \overline{\mathcal M}_\bfk$.
\end{proposition}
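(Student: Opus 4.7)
The plan is to reduce both estimates to classical subspace perturbation inequalities applied to the $d-1$ interface unfoldings of a TT tensor. For each $\mu=1,\dots,d-1$, denote by $X^{(\mu)}\in \R^{(N_1\cdots N_\mu)\times(N_{\mu+1}\cdots N_d)}$ the interface matricization grouping the first $\mu$ indices against the remaining $d-\mu$, and write $\mathcal U_\mu(X)$ and $\mathcal V_\mu(X)$ for its column and row spaces and $\sigma^{(\mu)}_{k_\mu}(X)$ for its smallest nonzero singular value. The argument of \Cref{prop:singvaldistest} repeats verbatim in this setting and shows $\sigma^{(\mu)}_{k_\mu}(X)\ge \sigma$ for every $\mu$: truncating the $k_\mu$-th singular value of $X^{(\mu)}$ yields a tensor of TT rank $(k_1,\dots,k_\mu-1,\dots,k_{d-1})$ belonging to $\overline{\mathcal M}_\bfk \setminus \mathcal M_\bfk$ at distance exactly $\sigma^{(\mu)}_{k_\mu}(X)$ from $X$.

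Next I would invoke the standard orthogonal decomposition of the TT tangent-space projector \cite{Holtz12,Lubich15} in the form $P_X = \sum_{\mu=0}^{d-1} P_X^\mu$, with each $P_X^\mu$ acting in the $\mu$-th interface unfolding through the subspace projectors $P_{\mathcal U_\mu(X)}$ and $P_{\mathcal V_\mu(X)}$, in direct analogy to \Cref{prop: tangent space projection}. For the first inequality, I would then follow the telescopic argument from the proof of \Cref{prop: curvature estimate}: each $\|P_X^\mu - P_Y^\mu\|$ is bounded by a constant times the sum of $\|P_{\mathcal U_\mu(X)}-P_{\mathcal U_\mu(Y)}\|$ and $\|P_{\mathcal V_\mu(X)}-P_{\mathcal V_\mu(Y)}\|$, and Wedin's bound together with the singular-value estimate above yields $\|P_{\mathcal U_\mu(X)}-P_{\mathcal U_\mu(Y)}\| \le \|X-Y\|/\sigma$, and similarly for $\mathcal V_\mu$. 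Tracking the (at most four) terms per interface and summing over the $d$ components of $P_X$ produces the constant $4d$.

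For the second inequality, I would decompose the identity into $d$ mutually orthogonal projectors aligned with the TT interfaces, mirroring the proof of \Cref{prop: curvatur estimate 2}, so that $(\id - P_X)(X-Y)$ splits into $d-1$ pairwise orthogonal pieces; the $\mu$-th piece acts in the $\mu$-th interface unfolding essentially as $(I - P_{\mathcal U_\mu(X)})(X^{(\mu)} - Y^{(\mu)})$ restricted to an appropriate right factor. Since $Y \in \overline{\mathcal M}_\bfk$ implies $\rank Y^{(\mu)} \le k_\mu$ while $\rank X^{(\mu)} = k_\mu$, the classical quadratic best-rank-$k_\mu$ approximation estimate gives $\|(I - P_{\mathcal U_\mu(X)})(X^{(\mu)} - Y^{(\mu)})\| \le \|X^{(\mu)}-Y^{(\mu)}\|^2/\sigma^{(\mu)}_{k_\mu}(X) \le \|X-Y\|^2/\sigma$ for every $\mu$. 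Pythagorean summation of the $d-1$ orthogonal contributions yields the factor $\sqrt{d-1}$. Because the argument only uses $\rank Y^{(\mu)} \le k_\mu$, it applies verbatim to $Y \in \overline{\mathcal M}_\bfk$.

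The main obstacle is setting up the orthogonal decompositions of $P_X$ and of $\id - P_X$ compatibly with the nested interface structure of the TT format, so that each summand can be reduced to a two-dimensional subspace perturbation problem without cross terms and without compounding of constants across interfaces. Once this bookkeeping is in place, the singular-value lower bound and the Wedin/best-approximation inputs are routine.
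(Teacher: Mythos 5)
Your treatment of the first inequality is essentially the paper's argument: the same interface-based orthogonal splitting $P_X=\sum_{\mu=1}^d P^X_\mu$, a four-term triangle inequality per component, and the subspace perturbation bound $\|P_{\mathcal U_\mu(X)}-P_{\mathcal U_\mu(Y)}\|\le\|X-Y\|/\sigma^{(\mu)}_{k_\mu}(X)$ combined with $\sigma^{(\mu)}_{k_\mu}(X)\ge\sigma$, whose proof by truncating the smallest interface singular value is indeed the verbatim analogue of \Cref{prop:singvaldistest}. That part is sound.

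For the second inequality there is a genuine gap: the estimate you invoke,
\[
\|(I - P_{\mathcal U_\mu(X)})(X^{(\mu)} - Y^{(\mu)})\| \le \frac{1}{\sigma^{(\mu)}_{k_\mu}(X)}\,\|X^{(\mu)}-Y^{(\mu)}\|^2,
\]
is false. A one-sided complement projection cannot produce a quadratic gain: writing $(I-P_{\mathcal U_\mu(X)})(X^{(\mu)}-Y^{(\mu)}) = -(I-P_{\mathcal U_\mu(X)})P_{\mathcal U_\mu(Y)}Y^{(\mu)}$ only gives a bound of order $\|X-Y\|\,\|Y\|/\sigma$. Concretely, for $d=2$, $k=1$, $X=e_1e_1^\T$ and $Y=ww^\T$ with $w=\cos\theta\, e_1+\sin\theta\, e_2$, one has $\|(I-P_{\mathcal U(X)})(X-Y)\|=|\sin\theta|$ while $\|X-Y\|^2/\sigma=2\sin^2\theta$, so the claimed inequality fails for small $\theta$. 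The quadratic bound requires the orthogonal complement on \emph{both} sides of the $\mu$-th interface, and this is precisely what the decomposition of $\id-P_X$ supplies: its $\mu$-th summand is $(P_{\mathcal U_{\{1,\dots,\mu-1\}}}\otimes\id_{N_\mu}-P_{\mathcal U_{\{1,\dots,\mu\}}})\otimes(\id-P_{\mathcal V_{\{\mu+1,\dots,d\}}})$, whose left factor annihilates $X$ (it is dominated by $\id-P_{\mathcal U_{\{1,\dots,\mu\}}}$) and whose right factor is $\id-P_{\mathcal V_{\{\mu+1,\dots,d\}}}$. Acting on $X-Y$, the right factor may be replaced by $P_{\tilde{\mathcal V}_{\{\mu+1,\dots,d\}}}-P_{\mathcal V_{\{\mu+1,\dots,d\}}}$, where $\tilde{\mathcal V}_{\{\mu+1,\dots,d\}}$ is the corresponding row space of $Y$ (use that $\id-P_{\mathcal V}$ annihilates $X$ and $\id-P_{\tilde{\mathcal V}}$ annihilates $Y$); this difference of projectors has norm at most $\|X-Y\|/\sigma$, which yields the quadratic bound for each of the $d-1$ terms, and Pythagoras over the mutually orthogonal left factors gives $\sqrt{d-1}/\sigma$. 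Your overall architecture is correct, but the per-interface justification must be replaced by this two-sided argument; as written, the step your proof rests on does not hold.
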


The proof is given in the appendix.

\section{Application to the model problem}\label{sec:applmodelproblem}
We now return to the model problem \eqref{eq:modelproblem2DLRA} under the regularity assumption~\ref{property:regularity}.
\begin{problem}\label{modelproblem}
  Given $f\in L_2(0,T;L_2(\Omega))$ and
  $u_0 \in \mathcal M \cap H_0^1(\Omega)$, find
  \[
   u \in W(0,T;H_0^1(\Omega),L_2(\Omega)) = \{ u \in L_2(0,T; H_0^1(\Omega)) \colon  u' \in L_2(0,T; \mathcal V^*) \}
  \]
  such that for almost all $t \in [0,T]$,
  \begin{equation}\label{eq:modelprob1}
    \begin{aligned}
    u(t)  &\in \mathcal M,\\
      \langle u'(t), v\rangle + a(u(t),v;t) &= \langle f(t), v \rangle \quad \text{for all $v \in T_{u(t)} \mathcal M \cap H_0^1(\Omega)$}, \\
      u(0) &= u_0.
    \end{aligned}
  \end{equation}
  \end{problem}

Here 
\[
a(u,v;t) =  \int_\Omega (B(t)\nabla u(x))\cdot \nabla v(x) \,dx  
\]
with a symmetric positive definite matrix $B(t)$ that is entrywise Lipschitz continuous in~$t$
and
$\mathcal M$ is a manifold of functions in $L_2(\Omega) = L_2(\Omega_1 \times \dots \times \Omega_d)$ as described in \eqref{eq: Tucker format}, that is, $u\in\mathcal M$ is of the form
\begin{equation}\label{eq:tuckerfunction}
  u = \sum_{k_1 = 1}^{r_1} \cdots \sum_{k_d = 1}^{r_d} C(k_1,\dots,k_d) \, u^1_{k_1} \otimes \dots  \otimes  u^d_{k_d},
\end{equation}
with $C\in\mathcal M_\mathrm c$.
\subsection{Discussion of main assumptions}\label{sec:mainassumptions}
Our goal is to apply Theorems~\ref{thm:existandunique},~\ref{thm:stability} and~\ref{thm:convergence_space-discr} to \Cref{modelproblem}.  It suffices to verify the Assumptions~\ref{property:cone}--\ref{property:splitting}, \ref{property:discr_approximation}, and~\ref{property:discr_compat}.
Assumption~\ref{property:cone} holds since $\mathcal{M}_\mathrm{c}$ is a cone, which follows from the invariance assumption~\eqref{eq: invariance Mhat}.
We already proved \ref{property:curvature} in \Cref{prop: curvature estimate} and \Cref{prop: curvatur estimate 2} assuming that corresponding curvature bounds for $\mathcal{M}_\mathrm{c}$ are available.
Indeed, in the special case where $\mathcal{M}_\mathrm{c}$ is the manifold of tensors with constant TT-rank~$\mathcal M_\mathbf{k}$, such curvature bounds are stated in \Cref{prop: SubspaceTTCurvature}. 
We now consider the remaining Assumptions~\ref{property:compat} and~\ref{property:splitting} as well as~\ref{property:discr_approximation} and~\ref{property:discr_compat}. We make use of the following well-known technique for estimating norms of factors in low-rank representations, see for example \cite{Uschmajew11}.

\begin{lemma}\label{lm: singularvector H1norm}
  Let $u\in H^1_0(\Omega)$ admit a singular value decomposition
  \[
    u(x) = \sum_{k =1}^{r_\mu} \sigma_k u_{1,k}(x_\mu) u_{2,k}(x_{\{1,\ldots, d\}\setminus\{\mu\}})
  \] 
  with respect to the $\mu$-th variable. Then the singular vectors satisfy $u_{1,k}\in H_0^1(\Omega_\mu)$ and $ u_{2,k} \in H_0^1(\bigtimes_{\nu\neq \mu} \Omega_\nu)$ with
  \[
    \|u_{1,k}\|_{H_0^1(\Omega_\mu)}\leq \frac{1}{\sigma_k} \|u\|_{H_0^1(\Omega)}\quad \text{and} \quad
    \|u_{2,k}\|_{H_0^1(\bigtimes_{\nu\neq \mu} \Omega_\nu)}\leq \frac{1}{\sigma_k} \|u\|_{H_0^1(\Omega)}.
  \]
\end{lemma}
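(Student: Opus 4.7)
The plan is to obtain the singular vectors as contractions of $u$ against the complementary singular vectors in the $L_2$ inner product, and then to show that this contraction preserves $H_0^1$ regularity with a quantitative norm bound.

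First I would use that $\{u_{1,k}\}$ and $\{u_{2,k}\}$ are $L_2$-orthonormal systems to rewrite the SVD as the identity
\[
\sigma_k u_{1,k}(x_\mu)=\int_{\bigtimes_{\nu\neq\mu}\Omega_\nu} u(x_\mu,x_{\neq\mu})\,u_{2,k}(x_{\neq\mu})\,dx_{\neq\mu},
\]
and symmetrically for $u_{2,k}$. Thus $u_{1,k}$ (up to the factor $\sigma_k$) is the image of $u$ under the bounded linear operator $T_g\colon L_2(\Omega)\to L_2(\Omega_\mu)$ that integrates against $g=u_{2,k}$ in the complementary variables; the corresponding statement for $u_{2,k}$ is analogous.

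Next I would show that $T_g$ restricts to a bounded operator $H_0^1(\Omega)\to H_0^1(\Omega_\mu)$ with operator norm at most $\|g\|_{L_2}=1$ in both the $L_2$ and the gradient-seminorm parts. The clean way is by density: take $u_n\in C_c^\infty(\Omega)$ with $u_n\to u$ in $H_0^1(\Omega)$, and set $w_n=T_g u_n$. Differentiation under the integral sign is justified by the compact support and smoothness of $u_n$, giving $w_n\in C^\infty(\Omega_\mu)$; moreover $\mathrm{supp}(w_n)$ is contained in the projection of $\mathrm{supp}(u_n)$ onto $\Omega_\mu$, hence compact in $\Omega_\mu$, so $w_n\in C_c^\infty(\Omega_\mu)$. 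Cauchy--Schwarz applied pointwise in $x_\mu$ and then integrated yields
\[
\|w_n\|_{L_2(\Omega_\mu)}\le \|u_n\|_{L_2(\Omega)},\qquad \|\nabla_{x_\mu} w_n\|_{L_2(\Omega_\mu)}\le \|\nabla_{x_\mu} u_n\|_{L_2(\Omega)},
\]
so $(w_n)$ is Cauchy in $H_0^1(\Omega_\mu)$; its limit must coincide $L_2$-a.e.\ with $T_g u$, and therefore $T_g u\in H_0^1(\Omega_\mu)$ with the same bounds.

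Applying this with $g=u_{2,k}$ (which is $L_2$-normalized) to the identity from the first step gives
\[
\sigma_k \|u_{1,k}\|_{H_0^1(\Omega_\mu)}\le \|u\|_{H_0^1(\Omega)},
\]
and the symmetric argument with the roles of the two groups of variables interchanged yields the bound for $u_{2,k}$. The only genuinely nontrivial point is verifying that the trace of the contracted function on $\partial\Omega_\mu$ vanishes, and this is precisely what the density argument above is designed to handle; everything else is routine Cauchy--Schwarz. Note that $\sigma_k>0$ since the SVD is taken up to rank~$r_\mu$, so dividing by $\sigma_k$ is legitimate.
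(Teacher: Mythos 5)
Your proof is correct and follows essentially the same route as the paper: express $\sigma_k u_{1,k}$ (resp.\ $\sigma_k u_{2,k}$) as the $L_2$-contraction of $u$ against the complementary, $L_2$-normalized singular vector, and then apply Cauchy--Schwarz to the contracted gradient. The only difference is that you make explicit, via approximation by $C_c^\infty$ functions, the justification for differentiating under the integral and for the vanishing trace (membership in $H_0^1(\Omega_\mu)$ rather than just $H^1$), which the paper's proof leaves implicit; this is a welcome but not essentially different addition.
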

\begin{proof}
  We state the proof for $\mu = 1$. Then
  \[\sigma_k u_{1,k}(x_1) = \int_{\bigtimes_{\nu = 2}^d \Omega_\nu} u(x_1,x_2,\ldots,x_d) u_{2,k}(x_2,\ldots, x_d)\, d(x_2,\ldots, x_d)
  \]
  and
  \[\sigma_k u_{2,k}(x_2,\ldots, x_d) = \int_{\Omega_1} u(x_1,x_2,\ldots,x_d) u_{1,k}(x_1)\, dx_1.
  \]
  By the Cauchy-Schwarz inequality, we have
\begin{multline*}
  \sigma_k^2\|u_{1,k}\|_{H_0^1(\Omega_1)}^2
  =
  \sigma_k^2\int_{\Omega_1} \abs{\nabla_{x_1} u_{1,k} }^2 \,dx_1 
  =
   \int_{\Omega_1} \abs{\int_{\bigtimes_{\nu = 2}^d \Omega_\nu}\nabla_{x_1}
   u\; u_{2,k}\, d(x_2, \ldots,x_d)
   }^2 \,dx_1\\
   \leq
   \|u_{2,k}\|_{L_2((0,1)^{d-1})}^2\int_\Omega \abs{\nabla_{x_1}u}^2\,d(x_1,\ldots, x_d) 
   = \|u\|^2_{H_0^1(\Omega_1)\otimes L^2(\bigtimes_{\nu = 2}^d \Omega_\nu)}
   \leq \|u\|^2_{H_0^1(\Omega)}.
\end{multline*}
This proves the first estimate. The other one follows in analogy.
\end{proof}

\subsubsection{Assumption \ref{property:compat}: Compatibility of tangent spaces}

We now verify~\ref{property:compat1}.

\begin{lemma}
  Let $u\in H_0^1(\Omega)\cap \mathcal M$ and $v \in H_0^1(\Omega)\cap T_u\mathcal M$. Then there exists an admissible curve $\varphi(t)\in H_0^1(\Omega)\cap \mathcal M$ for $\abs{t} $ small enough with $\varphi(0)=u$ and $\varphi'(0)=v$.
\end{lemma}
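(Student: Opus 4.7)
The plan is to build an explicit admissible curve by perturbing the factors in a low-rank representation of $u$ linearly (or, for the core tensor, along a smooth curve in $\mathcal{M}_{\mathrm c}$). First, using \Cref{lm: singularvector H1norm}, I would choose the orthonormal basis $U^\mu = [u^\mu_1,\dots,u^\mu_{r_\mu}]$ of the $\mu$-th minimal subspace $\mathcal U^\mu$ as the left singular vectors of the matricization $M^\mu_u$; by the lemma these lie in $H_0^1(\Omega_\mu)$, so that $\mathcal U^\mu \subset H_0^1(\Omega_\mu)$ and $u = C \times_1 U^1 \times_2 \dots \times_d U^d$ with a suitable $C \in \mathcal{M}_{\mathrm c}$. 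By \Cref{thm: manifold}(iii), the tangent vector $v$ then admits a unique decomposition
\[
v = \dot C \times_1 U^1 \times_2 \dots \times_d U^d + \sum_{\mu=1}^d C \times_1 U^1 \times_2 \dots \times_\mu \dot U^\mu \times_{\mu+1} \dots \times_d U^d
\]
with $\dot C \in T_C\mathcal{M}_{\mathrm c}$ and $(U^\mu)^\T \dot U^\mu = 0$.

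The main step is to show that the columns of each $\dot U^\mu$ lie in $H_0^1(\Omega_\mu)$. Taking $\mu$-th matricizations and using~\eqref{eq: decomposition of tangent space}, one gets $(I - P_{\mathcal U^\mu}) M^\mu_v = \dot U^\mu (V^\mu)^\T$ and, since $C$ has full multilinear rank so that $G(V^\mu)$ is invertible,
\[
\dot U^\mu = (I - P_{\mathcal U^\mu})\, M^\mu_v\, V^\mu \bigl( G(V^\mu) \bigr)^{-1}.
\]
The columns of $M^\mu_v V^\mu$ are the functions $x_\mu \mapsto \int_{\Omega_{\neq\mu}} v(x_\mu,x_{\neq\mu})\, v^\mu_k(x_{\neq\mu})\,dx_{\neq\mu}$, with $v^\mu_k \in L_2(\Omega_{\neq\mu})$. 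An argument analogous to the proof of \Cref{lm: singularvector H1norm} (using Fubini and Cauchy--Schwarz on $\partial_{x_\mu} v \in L_2(\Omega)$, and exploiting that $v$ has vanishing trace on $\partial\Omega_\mu \times \Omega_{\neq\mu}$) shows that these columns lie in $H_0^1(\Omega_\mu)$. Since $\mathcal U^\mu \subset H_0^1(\Omega_\mu)$, the orthogonal projection $I - P_{\mathcal U^\mu}$ maps $H_0^1(\Omega_\mu)$ into itself, and the $r_\mu \times r_\mu$ matrix $(G(V^\mu))^{-1}$ only takes linear combinations of columns, so $\dot U^\mu$ has its columns in $H_0^1(\Omega_\mu)$.

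Finally, since $\mathcal{M}_{\mathrm c}$ is a smooth finite-dimensional submanifold of $\R^{r_1 \times \dots \times r_d}$, standard manifold theory yields a smooth curve $C(t)$ in $\mathcal{M}_{\mathrm c}$ with $C(0) = C$ and $C'(0) = \dot C$. Setting $U^\mu(t) = U^\mu + t\dot U^\mu$, define
\[
\varphi(t) = C(t) \times_1 U^1(t) \times_2 \dots \times_d U^d(t).
\]
For $|t|$ small, the Gramians $G(U^\mu(t))$ remain invertible by continuity, so $\varphi(t) \in \mathcal M$; and $\varphi(t) \in H_0^1(\Omega)$ as a finite sum of tensor products of $H_0^1$ functions. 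A direct computation with the product and chain rules gives $\varphi(0) = u$ and $\varphi'(0) = v$, first in $L_2(\Omega)$, which is what the definition of tangent vector demands. The principal obstacle is precisely the $H_0^1$-regularity of $\dot U^\mu$ outlined in the middle paragraph; everything else is routine continuity, together with invoking the finite-dimensional manifold structure of $\mathcal{M}_{\mathrm c}$ to lift $\dot C$ to a curve.
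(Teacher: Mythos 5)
Your proof follows essentially the same route as the paper: represent $u$ with $H_0^1$ factors via \Cref{lm: singularvector H1norm}, decompose $v$ via \Cref{thm: manifold}(iii), verify the $H_0^1$-regularity of the perturbation directions $\dot U^\mu$, and assemble the curve $\varphi(t)=C(t)\times_1 (U^1+t\dot U^1)\times_2\cdots\times_d(U^d+t\dot U^d)$. If anything, your explicit formula $\dot U^\mu = (I-P_{\mathcal U^\mu})M^\mu_v V^\mu (G(V^\mu))^{-1}$ spells out the regularity step that the paper dispatches with ``by similar reasoning,'' so the argument is correct and complete.
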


\begin{proof}
  Let $u$ be of the form~\eqref{eq:tuckerfunction}. Then by \Cref{lm: singularvector H1norm} all basis functions $u_{k_\mu}^\mu\in H_0^1(\Omega_\mu)$.
  We write $v$ is in~\eqref{eq: tangent vectors}, that is, $v = v_0+v_1+\ldots+v_d$, where 
  \[
    v_0 = \sum_{k_1 = 1}^{r_1} \cdots \sum_{k_d = 1}^{r_d} \dot{C}(k_1,\dots,k_d) \, u^1_{k_1} \otimes \dots  \otimes  u^d_{k_d}
  \]
  with $\dot C\in T_C\mathcal M_\mathrm c$ and 
  \[
    v_\mu = \sum_{k_1 = 1}^{r_1} \cdots \sum_{k_d = 1}^{r_d} C(k_1,\dots,k_d) \, u^1_{k_1} \otimes \dots \otimes u^{\mu-1}_{k_{\mu-1}}\otimes \dot u^{\mu}_{k_\mu} \otimes u^{\mu+1}_{k_{\mu+1}}  \otimes \dots \otimes  u^d_{k_d}
  \]
  where $\dot u_{k_\mu}^\mu$ is orthogonal to all $u_1^\mu,\ldots, u_{r_\mu}^\mu$. 
  By similar reasoning as in \Cref{lm: singularvector H1norm} one can show that $\dot u_{k_\mu}^\mu\in H_0^1(\Omega_\mu)$.
  Furthermore, there exists a curve $D(t)\in \mathcal M_\mathrm c$ for $\abs{t}$ small enough such that $D(0)=C$ and $D'(0) = \dot C$. We now choose 
  \[
    \varphi(t) = \sum_{k_1 = 1}^{r_1} \cdots \sum_{k_d = 1}^{r_d} D(k_1,\dots,k_d)(t) \, (u^1_{k_1}+t \dot u^1_{k_1}) \otimes \dots \otimes  (u^d_{k_d}+t \dot u^d_{k_d}),
  \]
  which is a differentiable curve in $\mathcal M\cap H_0^1(\Omega)$ for small enough $\abs t$. By the product rule, it satisfies $\varphi(0)=u$ and $\varphi'(0)=v$.
\end{proof}

Assumption~\ref{property:compat2}, which states that for $u\in H_0^1(\Omega) \cap \mathcal M$ the $L_2$-orthogonal projection onto its tangent space is also a bounded operator with respect to the $H_0^1$-norm, follows with a similar technique as \Cref{lm: singularvector H1norm}.
\begin{proposition}\label{prop:H1projectorcompat}
  Let $\mathcal M$ be of the form~\eqref{eq: Tucker format}, let $u\in H^1_0(\Omega)\cap \mathcal M$ and $v\in H^1_0(\Omega)$. Let $P_u^0, \ldots, P_u^d$ be the projections in~\eqref{eq: projector splitting}. Then
  \[
    \|P_u^\mu v\|_{H^1_0(\Omega)}\leq c    \|v\|_{H^1_0(\Omega)}
  \]
\end{proposition}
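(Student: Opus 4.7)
The plan is to reduce the bound to one that can be read off directly from \Cref{lm: singularvector H1norm} by choosing convenient orthonormal bases for $\mathcal U^\mu$ and $\mathcal V^\mu$. Using the invariance of the tangent projectors $P^\mu_u$ under an orthogonal change of basis in each mode, I would replace $U^\mu$ by the matrix of left singular vectors of the $\mu$-th matricization $M^\mu_u$. The corresponding right singular vectors, after normalization in $L_2(\Omega_{\neq\mu})$, give an orthonormal basis $w^\mu_1,\dots,w^\mu_{r_\mu}$ of $\mathcal V^\mu$, and \Cref{lm: singularvector H1norm} applied to $u$ yields
\[
 \| u^\mu_k \|_{H_0^1(\Omega_\mu)},\ \| w^\mu_k \|_{H_0^1(\Omega_{\neq\mu})} \le \frac{1}{\sigma^\mu_k(u)} \| u \|_{H_0^1(\Omega)},
\]
so that all basis-dependent factors are absorbed into the final constant $c = c(u)$.

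For $\mu \in \{1,\dots,d\}$, I would then compute from~\eqref{eq: matricization projectors}
\[
 P^\mu_u v = \sum_{k=1}^{r_\mu} (I - P_{\mathcal U^\mu}) g_k \otimes w^\mu_k, \qquad g_k(x_\mu) = \int_{\Omega_{\neq\mu}} v(x_\mu,y)\, w^\mu_k(y)\, dy,
\]
and estimate $\| P^\mu_u v \|_{H_0^1}^2 = \sum_\nu \| \partial_{x_\nu} P^\mu_u v \|_{L_2}^2$ term by term. For $\nu \ne \mu$ the derivative falls on $w^\mu_k$ and is controlled by the $H_0^1$-estimate on $w^\mu_k$ combined with $\| g_k \|_{L_2(\Omega_\mu)} \le \| v \|_{L_2(\Omega)}$. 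For $\nu = \mu$ I would split
\[
 \partial_{x_\mu} \bigl[ (I - P_{\mathcal U^\mu}) g_k \bigr] = \partial_{x_\mu} g_k - \sum_j \langle g_k, u^\mu_j\rangle_{L_2(\Omega_\mu)}\, \partial_{x_\mu} u^\mu_j,
\]
using Cauchy--Schwarz inside the partial integral to get $\| \partial_{x_\mu} g_k \|_{L_2(\Omega_\mu)} \le \| \partial_{x_\mu} v \|_{L_2(\Omega)}$ and the $H_0^1$-estimate on each $u^\mu_j$ for the remaining finite sum. For $\mu = 0$, I would use that $P^0_u v$ lies in the finite-dimensional subspace $\mathcal U^1 \otimes \cdots \otimes \mathcal U^d$ with coefficient tensor $P_C(C_v)$ of norm at most $\| v \|_{L_2(\Omega)}$; differentiating the tensor product basis in one mode at a time and using $L_2$-orthonormality of $u^\mu_k$ in the other $d-1$ modes together with \Cref{lm: singularvector H1norm} on the active mode yields the bound directly.

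The main obstacle is bookkeeping: the estimates must be routed so that whenever a derivative of $v$ appears, the accompanying factor is a singular vector controlled by $L_2$-orthonormality, and whenever a derivative of a basis function appears, the pairing with $v$ is done in $L_2$ only. In particular, one must avoid mixing $\| v \|_{H_0^1}$ with a gradient of some $u^\mu_k$ or $w^\mu_k$. Once this is handled with care, the constant $c$ can be taken to depend only on $d$, the ranks $r_1,\dots,r_d$, $\| u \|_{H_0^1(\Omega)}$, and the smallest singular values $\sigma^\mu_{r_\mu}(u)$, all entering through the reciprocals $\sigma^\mu_k(u)^{-1}$ produced by \Cref{lm: singularvector H1norm}.
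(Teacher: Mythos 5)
Your proposal is correct and follows essentially the same route as the paper's proof: split $\|P_u^\mu v\|_{H_0^1}^2$ according to which variable carries the derivative, use \Cref{lm: singularvector H1norm} to bound the $H^1$-norms of the (normalized) singular vectors by $\sigma_k^{-1}\|u\|_{H_0^1}$, pair derivatives of $v$ only against $L_2$-orthonormal factors and derivatives of basis functions only against $L_2$-pairings with $v$, and close with $\|P_C(C_v)\|\le\|v\|_{L_2}$ and Poincar\'e. The only cosmetic difference is that the paper performs an extra orthogonal change of basis making the univariate factors simultaneously $H_0^1$-orthogonal, which avoids the rank factors you absorb into $c$ via Cauchy--Schwarz.
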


\begin{proof}
  First, we consider $P_u^0$. We note the norm bound 
  \[
    \|C_Z\| \leq \|Z\|_{\ell_2(\N^d)}
  \]
in the definition of $P_X^0$ in \Cref{prop: tangent space projection}. In terms of the represented function, 
\[
  \|C_v\| \leq \|v\|_{L_2(\Omega)}.
\]
Furthermore, we have
\[
  \|P_C(C_v)\| \leq\|C_v\|
\]
since $P_C$ is an $\ell_2$-orthogonal projection. We now consider the summands of
\[
\|P_u^0 v\|^2_{H_0^1(\Omega)} =   
\|P_u^0 v\|^2_{H_0^1{(\Omega_1)\otimes L_2(\bigtimes_{\mu = 2}^d \Omega_\mu)}}  +
\ldots +
\|P_u^0 v\|^2_{L_2(\bigtimes_{\mu = 1}^{d-1} \Omega_\mu)\otimes H_0^1{(\Omega_d)}} 
\]
independently.
Let 
\begin{equation}\label{eq: H1L2orthodec}
  u(x) = \sum_{k =1}^{r_1} \sum_{\ell = 1}^{r_1} a_{k\ell} u_{1,k}(x_1) u_{2,\ell}(x_2,\ldots, x_d)
\end{equation}
be a decomposition of $u$ separating the first variable, where $\{u_{1,k}\}$ and $\{u_{2,\ell}\}$ are $L_2$-orthonormal (as, for example, in a singular value decomposition).
After an orthogonal change of basis, we may assume the vectors $u_{1,k}$ to be both $L_2$-orthonormal and $H^1_0$-orthogonal. Note that the basis vectors are given by
\[
  u_{1,k} = \int_{\bigtimes_{\mu = 2}^d \Omega_\mu} u\; \sum_{\ell= 1}^{r_1} b_{k\ell} u_{2,\ell}\, d(x_2, \ldots, x_d)
  \quad\text{and}\quad
  u_{2,\ell} = \int_{\Omega_1} u\; \sum_{k= 1}^{r_1} b_{k\ell} u_{1,k}\, dx_1
\]
where $\sum_{\ell = 1}^{r_1} a_{k_1\ell} b_{k_2\ell} = \delta_{k_1,k_2}$. 

The singular values of the corresponding decomposition satisfy $\sigma_{k}\geq \sigma$ for $k = 1,\ldots, r_\nu$, and hence the vectors in~\eqref{eq: H1L2orthodec} satisfy 
\begin{equation}\label{eq: H1normvec}
  \begin{aligned}
  \sigma\|u_{1,k}\|_{H_0^1(\Omega_1)}
  &\leq 
  \|u\|_{H_0^1(\Omega_1)\otimes L_2(\bigtimes_{\mu = 2}^d \Omega_\mu)}  , \\
  \sigma\|u_{2,k}\|_{H_0^1(\bigtimes_{\mu = 2}^d \Omega_\mu)}
 & \leq 
  \|u\|_{L_2(\Omega_1)\otimes H_0^1(\bigtimes_{\mu = 2}^d \Omega_\mu)} ,
  \end{aligned} 
\end{equation}
since $\sigma \norm{\sum_{k= 1}^{r_1} b_{k\ell} u_{1,k}}_{L_2{(\bigtimes_{\mu = 2}^d \Omega_\mu)}} \leq 1$, $\sigma \norm{\sum_{\ell= 1}^{r_1} b_{k\ell} u_{2,\ell}}_{L_2(\Omega_1)}\leq 1$ and 
by the last argument of the proof of \Cref{lm: singularvector H1norm}.
By $L_2$- and $H_0^1$-orthogonality and~\eqref{eq: H1normvec}, we obtain the estimate
\begin{align*}
  \|P_u^0 &v\|^2_{H_0^1{(\Omega_1)\otimes L_2(\bigtimes_{\mu = 2}^d \Omega_\mu)}} \\
  &=
  \biggl\|\sum_{k_1 = 1}^{r_1} \cdots \sum_{k_d = 1}^{r_d} P_C(C_v)(k_1,\dots,k_d) \, \partial _{x_1}u^1_{k_1} \otimes u^2_{k_2} \otimes \dots \otimes u^d_{k_d}\biggr\|^2_{L_2(\Omega)}   \\
  &= 
  \sum_{k_1 = 1}^{r_1} \biggl\|\sum_{k_2 = 1}^{r_2} \cdots \sum_{k_d = 1}^{r_d} P_C(C_v)(k_1,\dots,k_d) \, u^2_{k_2} \otimes \dots \otimes u^d_{k_d}\biggr\|^2_{L_2(\bigtimes_{\mu = 2}^d \Omega_\mu)} \|u^1_{k_1}\|^2_{H_0^1(\Omega_1)} \\
  &\leq
  \frac{1}{\sigma^2}
  \|P_C(C_v)\|^2
  \|u\|^2_{H_0^1(\Omega_1)\otimes L_2(\bigtimes_{\mu = 2}^d \Omega_\mu)}.
\end{align*}
Using the Poincar\'e inequality, we have
\[
  \|P_C(C_v)\| \leq \norm{ C_v} = \norm{v}_{L_2(\Omega)} \leq  c_\Omega \norm{ v}_{H^1_0(\Omega)}
\]
with a $c_\Omega>0$ depending only on $\Omega$.
We thus arrive at
\[
  \|P_u^0 v\|_{H_0^1{(\Omega_1)\otimes L_2(\bigtimes_{\mu = 2}^d \Omega_\mu)}} \leq \frac{c_\Omega}{\sigma}
  \|v\|_{H_0^1(\Omega)}
  \|u\|_{H_0^1(\Omega_1)\otimes L_2(\bigtimes_{\mu = 2}^d \Omega_\mu)}.
\]
Similarly,
\begin{multline*}
  \|P_u^0 v\|_{L_2(\bigtimes_{\nu = 1}^{\mu-1} \Omega_\nu)\otimes H_0^1{(\Omega_\mu)\otimes L_2(\bigtimes_{\nu = \mu+1}^d \Omega_\nu)}} \\ \leq \frac{c_\Omega}{\sigma}
  \|v\|_{H_0^1{(\Omega)}}
  \|u\|_{L_2(\bigtimes_{\nu = 1}^{\mu-1} \Omega_\nu)\otimes H_0^1{(\Omega_\mu)\otimes L_2(\bigtimes_{\nu = \mu+1}^d \Omega_\nu)}}.
\end{multline*}
This yields
\[
  \|P_u^0 v\|_{H_0^1(\Omega)} \leq \frac{c_\Omega}{\sigma}
  \|v\|_{H_0^1{(\Omega)}}
  \|u\|_{H_0^1(\Omega)}.
\]

Next we consider $P_u^1$; the estimates for the projections $P_u^2,\ldots, P_u^d$ follow in analogy.
The action of $P_u^1$ is given by the tensor product of $L_2$-orthogonal projections
\[
  P_u^1(v)  =  (\id-P_1)\otimes P_2 v ,
\]
where  
\[
  (P_1 w )(x_1)
  =
\sum_{k=1}^{r_1}u_{1,k}(x_1) \int_{\Omega_1} u_{1,k}(y_1)w(y_1) \, dy_1
\]
and 
\[
(P_2 w )(x_2,\ldots, x_d)
=
\sum_{k=1}^{r_1}u_{2,k}(x_2,\ldots, x_d) \int_{\bigtimes_{\mu = 2}^d \Omega_\mu}\!\!\!\!\!\! u_{2,k}(y_2,\ldots, y_d)w(y_2,\ldots, y_d) \, d(y_2, \ldots ,y_d).
\]
We again use the decomposition~\eqref{eq: H1L2orthodec} with $L_2$-orthonormal and $H_0^1$-orthogonal sets of vectors $\{u_{1,k}\colon k=1,\ldots, r_1\}$  and $\{u_{2,k}\colon k=1,\ldots, r_1\}$. Using orthogonality, we get the estimate 
\begin{align*}
  \|P_1 w\|^2_{H_0^1(\Omega_1)}  
&=
\sum_{k=1}^{r_1}\|u_{1,k}\|^2_{H_0^1(\Omega_1)}  \left(\int_0^1 u_{1,k}(y_1)w(y_1) \, dy_1\right)^2
\\
&\leq
\frac{1}{\sigma^2}\|u\|^2_{H_0^1(\Omega_1)\otimes L_2(\bigtimes_{\mu = 2}^d \Omega_\mu)}\|w\|^2_{L_2(\Omega_1)} 
\end{align*}
and similarly
\[
  \|P_2 w\|_{H_0^1(\bigtimes_{\mu = 2}^d \Omega_\mu)}  \leq \frac{1}{\sigma}\|u\|_{{L_2(\Omega_1)\otimes H_0^1(\bigtimes_{\mu = 2}^d \Omega_\mu)}} \|w\|_{L_2(\bigtimes_{\mu = 2}^d \Omega_\mu)}.
\]
We finally obtain
\begin{align*}
  \|P_u^1v\|^2_{H^1_0(\Omega)} 
  &=
  \|P_u^1v\|^2_{H_0^1(\Omega_1)\otimes L_2(\bigtimes_{\mu = 2}^d \Omega_\mu)}
  +
  \|P_u^1v\|^2_{L_2(\Omega_1)\otimes H_0^1(\bigtimes_{\mu = 2}^d \Omega_\mu)} \\
  &=
  \|((\id-P_1)\otimes P_2) v\|^2_{H_0^1(\Omega_1)\otimes L_2(\bigtimes_{\mu = 2}^d \Omega_\mu)} \\
  &\qquad +
  \|((\id-P_1)\otimes P_2) v \|^2_{L_2(\Omega_1)\otimes H_0^1(\bigtimes_{\mu = 2}^d \Omega_\mu)} \\
  &\leq
  \left(\frac{1}{\sigma}\|u\|_{H_0^1(\Omega_1)\otimes L_2(\bigtimes_{\mu = 2}^d \Omega_\mu)}\|v\|_{L_2(\Omega)} + \|v\|_{{H_0^1(\Omega_1)\otimes L_2(\bigtimes_{\mu = 2}^d \Omega_\mu)}}\right)^2
  \\ &\qquad +
  \frac{1}{\sigma^2}\|u\|^2_{L_2(\Omega_1)\otimes H^1_0(\bigtimes_{\mu = 2}^d \Omega_\mu)}\|v\|^2_{L_2(\Omega)}
  \\
  &\leq 
  2\left(1+\frac{c_{\Omega_1}}{\sigma^2}\|u\|^2_{H^1_0(\Omega)}\right)\|v\|^2_{H_0^1(\Omega_1)\otimes L_2(\bigtimes_{\mu = 2}^d \Omega_\mu)},
\end{align*}
where we have again used Poincar\'e's inequality.
\end{proof}

We have thus verified
Assumption~\ref{property:compat}.

\subsubsection{Assumption \ref{property:splitting}: Operator decomposition}
The bilinear form $a(\cdot, \cdot;t)$ can be written as
$a(\cdot, \cdot;t) = a_1(\cdot, \cdot;t) + a_2(\cdot, \cdot;t)$ with 
\[
a_1(u,v;t) = \int_\Omega \sum_{\mu = 1}^d \bigl(b_{\mu\mu}(t)\nabla_{x_\mu}u(x)\bigr) \cdot \nabla_{x_\mu} v(x) \; dx
\] 
and
\[
a_2(u,v;t) = \int_\Omega \sum_{\substack{\mu, \nu = 1\\ \mu \neq \nu}}^d \bigl(b_{\mu\nu}(t)\nabla_{x_\mu}u(x)\bigr)\cdot \nabla_{x_\nu} v(x) \; dx,
\] 
where $b_{\mu\nu}(t)$ are the corresponding blocks of the matrix $B(t)$.
These bilinear forms in turn define linear operators $A_1$ and $A_2$.
For \ref{property:partA1}, we can use a similar technique as in~\cite{Bachmayr21}, albeit on the more complicated manifold $\mathcal M$. First, we show that the strong version of $A_1$, when defined, maps to the tangent space.
\begin{lemma}\label{lm:tangentspacepropertystrong}
  Let $\mathcal M$ be of the form~\eqref{eq: Tucker format}  and  $u\in H^2(\Omega) \cap H^1_0(\Omega)\cap \mathcal M$. Then for matrices~$a_\mu$ of the corresponding size, we have $\sum_{\mu = 1}^d \nabla_{x_\mu} \cdot (a_\mu\nabla_{x_\mu} u) \in T_u{\mathcal M}$.
\end{lemma}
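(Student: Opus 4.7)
The plan is to show that $\sum_{\mu=1}^d L_\mu(u)$, where $L_\mu(u) := \nabla_{x_\mu}\cdot(a_\mu\nabla_{x_\mu} u)$, has the tangent vector form~\eqref{eq: tangent vectors} of Theorem~\ref{thm: manifold}(iii). First I would fix a representation $u = C\times_1 U^1\times_2\cdots\times_d U^d$ as in~\eqref{eq:tuckerfunction} with $L_2$-orthonormal basis functions $u^\mu_{k_\mu}\in H^1_0(\Omega_\mu)$ spanning the minimal subspaces $\mathcal U^\mu$, which is possible by Lemma~\ref{lm: singularvector H1norm}. The key observation is that each $L_\mu$ acts only on the $x_\mu$-variable, so I expect a natural split of $L_\mu(u)$ into a core-tensor contribution (to be controlled through the invariance~\eqref{eq: invariance Mhat} of $\mathcal M_\mathrm c$) and a contribution living in $(\mathcal U^\mu)^\perp\otimes \mathcal V^\mu$ that directly fits the $\mu$-th summand of~\eqref{eq: tangent vectors}.

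To carry this out concretely, let $L^{(\mu)}v := \nabla_{x_\mu}\cdot(a_\mu\nabla_{x_\mu} v)$, so that the $\mu$-th matricization of $L_\mu(u)$ is formally $M^\mu_{L_\mu(u)} = \sum_{k_\mu}[L^{(\mu)} u^\mu_{k_\mu}]\otimes v^\mu_{k_\mu}$ with the $v^\mu_{k_\mu}$ as in~\eqref{eq: definition of v_k}. Since $u\in H^2(\Omega)$ yields $L_\mu(u)\in L_2(\Omega)$ and the $v^\mu_{k_\mu}$ are linearly independent in the finite-dimensional space $\mathcal V^\mu$, testing against a biorthogonal dual basis in $\mathcal V^\mu$ recovers each $L^{(\mu)}u^\mu_{k_\mu}$ as a genuine element of $L_2(\Omega_\mu)$. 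I would then perform the $L_2(\Omega_\mu)$-orthogonal split
\[
  L^{(\mu)} u^\mu_{k_\mu} = \sum_{\ell=1}^{r_\mu} (M^\mu)_{\ell k_\mu}\, u^\mu_\ell + w^\mu_{k_\mu}, \qquad (M^\mu)_{\ell k_\mu} := \langle u^\mu_\ell, L^{(\mu)} u^\mu_{k_\mu}\rangle_{L_2(\Omega_\mu)},
\]
with $w^\mu_{k_\mu}\in(\mathcal U^\mu)^\perp$. Substituting back yields
\[
 L_\mu(u) = (C\times_\mu M^\mu)\times_1 U^1\times_2\cdots\times_d U^d + C\times_1 U^1\times_2\cdots\times_\mu \dot U^\mu\times_{\mu+1}\cdots\times_d U^d,
\]
where $\dot U^\mu := [w^\mu_1,\ldots,w^\mu_{r_\mu}]$ satisfies $(U^\mu)^\T \dot U^\mu = 0$.

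Finally, the invariance~\eqref{eq: invariance Mhat} is what makes the core contribution legitimate: taking $T^\nu = I$ for $\nu\neq\mu$ and $T^\mu = I + tM^\mu$ (invertible for $|t|$ small), the curve $t\mapsto C\times_\mu(I+tM^\mu)$ lies in $\mathcal M_\mathrm c$, and its derivative at $t=0$ gives $C\times_\mu M^\mu\in T_C\mathcal M_\mathrm c$. Summing the decomposition above over $\mu$, the combined core contribution $\dot C := \sum_\mu C\times_\mu M^\mu$ still lies in $T_C\mathcal M_\mathrm c$ by linearity, while the mode-$\mu$ pieces already satisfy the orthogonality requirement. The result then matches~\eqref{eq: tangent vectors} exactly, so by Theorem~\ref{thm: manifold}(iii) we conclude $\sum_\mu L_\mu(u)\in T_u\mathcal M$. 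The main obstacle is the regularity step: a priori, each $L^{(\mu)} u^\mu_{k_\mu}$ is only a distribution on $\Omega_\mu$, and the orthogonal split requires extracting it as an honest $L_2$-function. The biorthogonal testing argument above resolves this; alternatively, one may refine Lemma~\ref{lm: singularvector H1norm} to show that for $u\in H^2(\Omega)$ the basis functions $u^\mu_{k_\mu}$ can be chosen in $H^2(\Omega_\mu)$, which likewise places $L^{(\mu)} u^\mu_{k_\mu}$ directly in $L_2(\Omega_\mu)$.
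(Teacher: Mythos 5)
Your argument is correct, but it takes a different route from the paper's. The paper exhibits each $\nabla_{x_\mu}\cdot(a_\mu\nabla_{x_\mu}u)$ directly as the velocity of an explicit curve in $\mathcal M$: it sets $\varphi(t)=u+t\,\nabla_{x_\mu}\cdot(a_\mu\nabla_{x_\mu}u)$, observes that this has the representation $\sum_{k}C(k)\,u^1_{k_1}\otimes\cdots\otimes\bigl(u^\mu_{k_\mu}+tL^{(\mu)}u^\mu_{k_\mu}\bigr)\otimes\cdots\otimes u^d_{k_d}$ with the \emph{same} core tensor $C\in\mathcal M_{\mathrm c}$ and a mode-$\mu$ system whose Gramian remains invertible for small $\lvert t\rvert$, and concludes $\varphi(t)\in\mathcal M$ and $\varphi'(0)=\nabla_{x_\mu}\cdot(a_\mu\nabla_{x_\mu}u)\in T_u\mathcal M$ directly from the definition of tangent vectors; the sum over $\mu$ then follows by linearity of $T_u\mathcal M$. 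You instead decompose $L_\mu(u)$ into the canonical parametrization~\eqref{eq: tangent vectors} of Theorem~\ref{thm: manifold}(iii), splitting $L^{(\mu)}u^\mu_{k_\mu}$ orthogonally against $\mathcal U^\mu$ and verifying separately that the core contribution $\sum_\mu C\times_\mu M^\mu$ lies in $T_C\mathcal M_{\mathrm c}$ via the invariance~\eqref{eq: invariance Mhat}. The paper's curve argument is shorter and sidesteps both the orthogonal splitting and the appeal to~\eqref{eq: invariance Mhat} (the core tensor never changes along its curve, and the factors along the curve need not be orthogonalized since membership in $\mathcal M$ only requires an invertible Gramian); your version yields more, namely the explicit components $(\dot C,\dot U^1,\ldots,\dot U^d)$ of the tangent vector, and it makes explicit a regularity point the paper leaves implicit: that $L^{(\mu)}u^\mu_{k_\mu}$ is a genuine $L_2(\Omega_\mu)$-function, which you justify by testing against a biorthogonal basis of $\mathcal V^\mu$, in the spirit of Lemma~\ref{lm: singularvector H1norm}. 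Both proofs need this regularity for the respective curves to be admissible, so your explicit treatment of it is a welcome addition rather than a detour.
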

\begin{proof}
  Since $u\in \mathcal M$, we can write $u$ as in~\eqref{eq:tuckerfunction}
  and
  \[
    \nabla_{x_1} \cdot (a_1\nabla_{x_1} u) 
    =  
    \sum_{k_1 = 1}^{r_1} \cdots \sum_{k_d = 1}^{r_d} C(k_1,\dots,k_d) \nabla_{x_1} \cdot (a_1\nabla_{x_1}u^1_{k_1})\otimes u^2_{k_2}\otimes \dots \otimes u^d_{k_d}. 
  \]
  We define $\varphi(t) = u + t \nabla_{x_1} \cdot (a_1\nabla_{x_1} u)$. For sufficiently small $\abs{t}$, the Gramian of the system $\{u^1_1+t\nabla_{x_1} \cdot (a_1\nabla_{x_1}u_1^1),u^1_2+t\nabla_{x_1} \cdot (a_1\nabla_{x_1}u_2^1),\dots,u^1_{r_1}+t\nabla_{x_1} \cdot (a_1\nabla_{x_1}u_{r_1}^1) \}$ is invertible and hence $\varphi(t)\in\mathcal M$ with $\varphi'(t) = \nabla_{x_1} \cdot (a_1\nabla_{x_1} u) $. Thus $\nabla_{x_1} \cdot (a_1\nabla_{x_1} u) \in T_u\mathcal M$. Analogously, $\nabla_{x_\mu} \cdot (a_\mu\nabla_{x_\mu} u) \in T_u\mathcal M$ for $\mu = 1,\ldots, d$ and by linearity, $\sum_{\mu = 1}^d \nabla_{x_\mu} \cdot (a_\mu\nabla_{x_\mu} u) \in T_u{\mathcal M}$.
\end{proof}

The assumption \ref{property:partA1} now follows by a density argument.
For a proof, choose a sequence $(u_n) \subset \mathcal M \cap H^2(\Omega) \cap H^1_0 (\Omega)$ converging to $u$ in
$H^1_0 (\Omega)$-norm. 
Then for $v \in H^1_0 (\Omega)$, we have
\[
a_1(u_n, v;t) = \langle A_1(t)u_n, v\rangle = \langle A_1(t)u_n, P_{u_n} v \rangle = a_1(u_n, P_{u_n} v;t)
\]
since $A_1(t)u_n \in T_{u_n}\mathcal M$
 by \Cref{lm:tangentspacepropertystrong}. Moreover,
\[
a_1(u_n, P_{u_n} v;t) = a_1(u, P_uv) + a_1(u,(P_{u_n} - P_u)v) + a_1(u_n - u, P_{u_n} v).
\]
We have $P_{u_n} v\to P_u v $ strongly in $L_2(\Omega)$ by \Cref{prop: curvature estimate}, and \Cref{prop:H1projectorcompat} yields $\limsup_n	\|P_{u_n} v\|_{	H^1_0}
< \infty$. Since $L_2(\Omega)$ is dense in $H^{-1}(\Omega)$ it follows that $P_{u_n} v \to P_u v $ weakly in
$H^1_0(\Omega)$ by a standard argument; see for example~\cite[Prop. 21.23(g)]{Zeidler90a}. Consequently,
$a_1(u_n, P_{u_n} v;t) \to a_1(u, P_uv; t)$. At the same time, $a_1(u_n, v;t) \to a_1(u, v;t)$, so we
have verified Assumption~\ref{property:partA1}.

For Assumption~\ref{property:partA2}, it suffices to verify mixed smoothness for~$u\in H_0^1(\Omega)\cap \mathcal M$.

\begin{lemma}
  Let $\mathcal M$ be of the form~\eqref{eq: Tucker format}, $u\in H^1_0(\Omega)\cap \mathcal M$, and $\sigma = \dist_{L_2}(u,\overline{\mathcal M}^w \setminus \mathcal M )$.
Then for $\nu\neq \mu$, we have $\|\nabla_{x_\mu}\nabla_{x_\nu} u\|_{L_2(\Omega)} \leq \frac{1}{2\sigma } \|u\|^2_{H^1_0(\Omega)}$.

\end{lemma}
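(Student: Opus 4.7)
My plan is to exploit the $L_2$-singular value decomposition of $u$ with respect to the $\mu$-th variable together with the lower bound on singular values from Proposition~\ref{prop:singvaldistest}. Since $u \in \mathcal M$, I can write
\[
u = \sum_{k=1}^{r_\mu} \sigma^\mu_k \, \phi_k(x_\mu)\,\tilde\psi_k(x_{\neq\mu}),
\]
where $\{\phi_k\}$ and $\{\tilde\psi_k\}$ are $L_2$-orthonormal families in $L_2(\Omega_\mu)$ and $L_2(\Omega_{\neq\mu})$, respectively, and $\sigma^\mu_k \geq \sigma^\mu_{r_\mu} \geq \sigma$ by Proposition~\ref{prop:singvaldistest}. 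Lemma~\ref{lm: singularvector H1norm} guarantees $\phi_k \in H^1_0(\Omega_\mu)$ and $\tilde\psi_k \in H^1_0(\Omega_{\neq\mu})$, so termwise differentiation makes sense and gives
\[
\partial_{x_\mu}\partial_{x_\nu} u(x) = \sum_{k=1}^{r_\mu} \sigma^\mu_k \,(\partial_{x_\mu}\phi_k)(x_\mu)\,(\partial_{x_\nu}\tilde\psi_k)(x_{\neq\mu}).
\]

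The key step is a weighted pointwise Cauchy--Schwarz with weight $\sqrt{\sigma^\mu_k}$, splitting each summand as $(\sqrt{\sigma^\mu_k}\,\partial_{x_\mu}\phi_k)\cdot(\sqrt{\sigma^\mu_k}\,\partial_{x_\nu}\tilde\psi_k)$. This yields
\[
|\partial_{x_\mu}\partial_{x_\nu} u(x)|^2 \leq \Bigl(\sum_k \sigma^\mu_k\,|\partial_{x_\mu}\phi_k(x_\mu)|^2\Bigr)\Bigl(\sum_k \sigma^\mu_k\,|\partial_{x_\nu}\tilde\psi_k(x_{\neq\mu})|^2\Bigr),
\]
which after integration and Fubini separates into a product. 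I would then invoke the elementary inequality $\sigma^\mu_k \leq (\sigma^\mu_k)^2/\sigma$ in each factor, and use orthonormality of $\{\phi_k\}$ and $\{\tilde\psi_k\}$ to identify
\[
\sum_k (\sigma^\mu_k)^2 \|\partial_{x_\mu}\phi_k\|_{L_2(\Omega_\mu)}^2 = \|\partial_{x_\mu} u\|_{L_2(\Omega)}^2, \qquad \sum_k (\sigma^\mu_k)^2 \|\partial_{x_\nu}\tilde\psi_k\|_{L_2(\Omega_{\neq\mu})}^2 = \|\partial_{x_\nu} u\|_{L_2(\Omega)}^2.
\]
Combining these,
\[
\|\partial_{x_\mu}\partial_{x_\nu} u\|_{L_2(\Omega)}^2 \leq \tfrac{1}{\sigma^2}\,\|\partial_{x_\mu}u\|_{L_2(\Omega)}^2\,\|\partial_{x_\nu}u\|_{L_2(\Omega)}^2.
\]

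The proof concludes with AM--GM and the decomposition of the $H^1_0$-norm: since $\mu\neq\nu$,
\[
\|\partial_{x_\mu}u\|_{L_2(\Omega)}\,\|\partial_{x_\nu}u\|_{L_2(\Omega)} \leq \tfrac{1}{2}\bigl(\|\partial_{x_\mu}u\|_{L_2(\Omega)}^2+\|\partial_{x_\nu}u\|_{L_2(\Omega)}^2\bigr) \leq \tfrac{1}{2}\|u\|_{H^1_0(\Omega)}^2,
\]
which yields the claimed bound. The main subtlety, and the reason for choosing the specific weight $\sqrt{\sigma^\mu_k}$ above, is that naive unweighted Cauchy--Schwarz together with Lemma~\ref{lm: singularvector H1norm} only gives a bound degrading as $r_\mu/\sigma$ in the rank; the weighted version is what allows both factors to be reabsorbed into squared first-order energies of $u$ via the estimate $\sigma^\mu_k \leq (\sigma^\mu_k)^2/\sigma$, producing the rank-independent constant $\tfrac{1}{2}$.
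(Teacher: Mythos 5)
Your argument is correct and rests on the same ingredients as the paper's proof: the singular value decomposition of $u$ separating the $\mu$-th variable, the lower bound $\sigma^\mu_k\ge\sigma$ from \Cref{prop:singvaldistest}, and a Cauchy--Schwarz/Young step with the weight $\sigma^\mu_k$ that lets both factors be reassembled into first-order energies of $u$ via orthonormality of the singular vectors. The only (cosmetic) difference is the order of operations: the paper applies the triangle inequality termwise and then Young's inequality with weight $\sigma_k^2/\sigma$ inside the sum, whereas you apply a weighted Cauchy--Schwarz over $k$ first and AM--GM at the end, which gives the slightly more informative intermediate bound $\|\nabla_{x_\mu}\nabla_{x_\nu}u\|_{L_2}\le\sigma^{-1}\|\nabla_{x_\mu}u\|_{L_2}\|\nabla_{x_\nu}u\|_{L_2}$ before collapsing to the stated estimate.
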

\begin{proof}
  Let $u(x) = \sum_{k =1}^{r_\mu} \sigma_k u_{1,k}(x_\mu) u_{2,k}(x_{\{1,\ldots, d\}\setminus\{\mu\}})$ be a singular value decomposition of $u$ separating the $\mu$-th variable. Then $\sigma_{k}\geq \sigma$ for $k = 1,\ldots, r_\nu$. We will use the abbreviation $\Omega_{\mu^c} = \bigtimes_{\lambda\neq\mu}\Omega_\lambda$. On the one hand, by the triangle inequality, Young's inequality, and $\sigma \le \sigma_k$, we have
  \begin{align*}
    \|\nabla_{x_\mu}\nabla_{x_\nu} u\|_{L_2(\Omega)}
    &\leq 
    \sum_{k =1}^{r_\mu} \sigma_k
    \|\nabla_{x_\mu}\nabla_{x_\nu} u_{1,k} \otimes u_{2,k}\|_{L_2(\Omega)}
    \\
    &=
    \sum_{k =1}^{r_\mu} \sigma_k
    \|\nabla_{x_\mu} u_{1,k}\|_{L_2(\Omega_\mu)} \|\nabla_{x_\nu} u_{2,k}\|_{L_2(\Omega_{\mu^c})} \\
    &\le
    \sum_{k=1}^{r_\mu}\frac{\sigma_k ^2}{2\sigma}\left(
      \|\nabla_{x_\mu} u_{1,k}\|_{L_2(\Omega_\mu)}^2 + \|\nabla_{x_\nu} u_{2,k}\|^2_{L_2(\Omega_{\mu^c})}\right).
  \end{align*}
  On the other hand, by $L_2$-orthogonality of the singular vectors, we have
  \[
    \|u\|^2_{H_0^1(\Omega)} 
    =
    \int_{\Omega} \sum_{\lambda = 1}^d \abs{ \nabla_{x_\lambda} u(x)}^2\, dx
    =
    \sum_{k =1}^{r_\mu} \sigma_k ^2\left(
    \|\nabla_{x_\mu} u_{1,k}\|_{L_2(\Omega_\mu)}^2 +\sum_{\lambda\neq \mu} \|\nabla_{x_\lambda} u_{2,k}\|^2_{L_2(\Omega_{\mu^c})}\right),
  \]
  and hence $\|\nabla_{x_\mu}\nabla_{x_\nu} u\|_{L_2(\Omega)} \leq \frac{1}{2\sigma } \|u\|^2_{H^1_0(\Omega)}$ as asserted.
\end{proof}

 Assumption~\ref{property:partA2} now follows directly by integration by parts.

\subsubsection{Assumptions \ref{property:discr_approximation} and \ref{property:discr_compat}: Spatial discretizations}\label{sec:assumptionsBmodel}

  We now exhibit space discretizations that can be used to achieve convergence to the infinite dimensional solution as in \Cref{thm:convergence_space-discr}. It turns out to be sufficient for the discretization to allow the format~\eqref{eq: Tucker format}. This is a natural requirement, since otherwise one does not have the required product structure for using the low-rank approximation in practice.

  For short, let us denote $H_0^1(\Omega)=\mathcal V=V^1\otimes H^2\otimes \dots\otimes H^d\cap \dots \cap H^1\otimes  \dots\otimes H^{d-1} \otimes V^d$, where $V^\mu = H_0^1(\Omega_\mu)$ and $H^\mu = L_2(\Omega_\mu)$ for $\mu = 1,\ldots, d$. Then $V^1\otimes \dots\otimes V^d$ is a continuously and densely embedded subspace of $\mathcal V$. As the finite-dimensional subspaces, we choose 
  \begin{equation}\label{eq:discretemodelspaces}
      \mathcal V_h= V_h^1\otimes \dots\otimes V_h^d  \quad\text{satisfying}\quad
      \|P_{V^\mu_h}v^\mu-v^\mu\|_{V^\mu}\to 0  \quad \text{as} \quad h\to 0 \quad \text{for all $v^\mu\in V^\mu$}.    
  \end{equation}
   This can be a finite element space, but also any other discretization suitable for $\Omega_\mu$. As a consequence, for $v\in V^1\otimes \dots\otimes V^d$, we have
  \[
    \|v - P_{V^1_h\otimes \dots\otimes V^d_h}v\|_\mathcal V \leq C 
    \|v - P_{V^1_h\otimes \dots\otimes V^d_h}v\|_{V^1\otimes \dots\otimes V^d}
    \to 0 \quad \text{for }\quad h\to 0.
  \]
  Since $V^1\otimes \dots\otimes V^d$ is a dense subspace of $\mathcal V$, the $\mathcal V$-orthogonal projection onto $V^1_h\otimes \dots\otimes V^d_h$ satisfies~{\ref{property:discr_approximation}}(a). 
  For assumption~{\ref{property:discr_approximation}}(b), let $u\in \mathcal V\cap \mathcal M$. Then $u\in V^1\otimes \dots\otimes V^d$ by \Cref{lm: singularvector H1norm}. By~$\eqref{eq: invariance Mhat}$, we have that $P_{V^1_h\otimes \dots\otimes V^d_h}u\in \overline {\mathcal M}^w$. Hence there exists $u_h\in V^1_h\otimes \dots\otimes V^d_h \cap \mathcal M$ such that $\|u_h-P_{V^1_h\otimes \dots\otimes V^d_h}u\|_\mathcal V \leq \epsilon$ for any $\epsilon>0$. Thus $\|u_h-u\|_\mathcal V\to 0$ as $h\to 0$. Possibly after rescaling, we can thus construct a sequence $(u_h)$ that converges to $u$ in $\mathcal V$ as $h\searrow 0$ with $\|u_h\|_\mathcal V \leq \|u\|_\mathcal V$.

  Assumption {\ref{property:discr_compat}} follows immediately by noting that $\mathcal M\cap \mathcal V_h$ is of the same form as $\mathcal M$ and \Cref{thm: manifold} can be applied.
  
  \begin{remark}
  In practical numerical realizations, the elements of the discretization subspaces $\mathcal V_h$ in \eqref{eq:discretemodelspaces} 
  need to be represented in terms of suitable basis functions, usually obtained as tensor products of bases of each $V^\nu_h$. 
  While the results given here refer to the represented functions, the properties of the problem in terms of basis coefficients depend also
  on the condition number of the chosen basis. For tensor product bases, this condition number is the product of the condition number of the univariate bases, and thus in general depends exponentially on $d$ unless orthonormal bases are used for each $V^\nu_h$. A possible remedy for large $d$ is the use of nonstandard basis functions (such as wavelets) for finite element spaces. We refer to \cite{Bachmayr:23} for further details.
  \end{remark}

\subsection{Main results}

The main results for the model problem are the following specific versions of \Cref{thm:existandunique}, \Cref{thm:convergence_space-discr}, and \Cref{thm:stability}. They follow directly by applying the results of \Cref{sec:tensor train model} and \Cref{sec:mainassumptions}.

\begin{theorem}[Existence and uniqueness of solutions]\label{thm:mainmodel}
  Let $u_0$ have positive $L_2$-distance from $\overline{\mathcal M}^w \setminus \mathcal M$.
   There exist $T^* \in (0,T]$ and
    $u \in W(0,T^*;H_0^1(\Omega),L_2(\Omega)) \cap L_\infty(0,T^*;H_0^1(\Omega))$ 
  such that $u$ solves \Cref{modelproblem} on the time interval $[0,T^*]$, and its continuous representative $u\in C(0,T^*;{L_2(\Omega)})$ satisfies $u(t) \in \mathcal M$ for all $t \in [0,T^*)$. Here $T^*$ is maximal for the evolution on $\mathcal M$ in the sense that if $T^* < T$, then 
  \[
    \liminf_{t \to T^*} \; \inf_{v\in\overline{\mathcal M}^w \setminus \mathcal M } \|u(t)-v\|_{L_2(\Omega)} = 0.
  \]
  In either case, $u$ is the unique solution of \Cref{modelproblem} in $W(0,T^*;H_0^1(\Omega),{L_2(\Omega)})$.
  
  In particular, with $\sigma = \dist_{{L_2(\Omega)}}(u_0,\overline{\mathcal M}^w \setminus \mathcal M )$, there exists a constant $c>0$ such that $T^* \ge \min(\sigma^2 / c,T)$.
   
  The solution satisfies the following estimates:
  \begin{align*}
      \|u\|^2_{L_2(0,T^*;H_0^1(\Omega))}&\leq \|u_0\|_{L_2(\Omega)}^2+C_1\|f\|^2_{L_2(0,T^*;{L_2(\Omega)})},
      \\
      \|u'\|^2_{L_2(0,T^*;{L_2(\Omega)})}&\leq C_2\left(\|u_0\|_{H_0^1(\Omega)}^2+\|f\|^2_{L_2(0,T^*;{L_2(\Omega)})}\right),
      \\
      \|u\|^2_{L^\infty(0,T^*;H_0^1(\Omega))}&\leq C_3\left(\|u_0\|_{H_0^1(\Omega)}^2+\|f\|^2_{L_2(0,T^*;{L_2(\Omega)})}\right),
  \end{align*}
  where $C_1$, $C_2$, and $C_3$ are the constants from~\cite[Lemma~4.4]{Bachmayr21}. 
\end{theorem}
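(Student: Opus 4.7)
The plan is simply to unwind the theorem into a verification of the hypotheses of the abstract result \Cref{thm:existandunique} for the concrete setting $\mathcal V = H_0^1(\Omega)$, $\mathcal H = L_2(\Omega)$, with the bilinear form $a(\cdot,\cdot;t)$ and manifold $\mathcal M$ from \Cref{modelproblem}. All the genuine analytic work has been done in the preceding sections; this proof is essentially a compilation step. The Gelfand-triple structure is provided by the Rellich-Kondrachov theorem together with the Poincar\'e inequality on $\Omega=(0,1)^d$, which gives \eqref{eq:norminequality}; boundedness, symmetry, coercivity and Lipschitz dependence in $t$ of $a(\cdot,\cdot;t)$ are built into the assumptions on $B(t)$.

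Next I would walk through Assumptions~\ref{property:regularity}--\ref{property:splitting} in order. Assumption~\ref{property:regularity} is part of the statement of the theorem. Assumption~\ref{property:cone} is immediate from the scaling invariance \eqref{eq: invariance Mhat} of $\mathcal M_\mathrm c$. Assumption~\ref{property:curvature} is supplied by \Cref{prop: curvature estimate} and \Cref{prop: curvatur estimate 2}, once one observes that on any weakly compact subset $\mathcal M'\subset \mathcal M$ the distance $\sigma = \dist_{L_2}(\cdot,\overline{\mathcal M}^w\setminus \mathcal M)$ stays uniformly bounded below, so that the curvature constant $\kappa(\mathcal M')$ exists; for the tensor-train case the concrete bounds of \Cref{prop: SubspaceTTCurvature} apply. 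Assumption~\ref{property:compat}(a) is settled by the admissible-curve construction in \Cref{sec:mainassumptions}, which combines the regularity $u^\mu_{k_\mu},\dot u^\mu_{k_\mu}\in H_0^1(\Omega_\mu)$ (following from \Cref{lm: singularvector H1norm}) with a smooth curve $D(t)\in \mathcal M_\mathrm c$ representing $\dot C$; Assumption~\ref{property:compat}(b) is \Cref{prop:H1projectorcompat}.

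For Assumption~\ref{property:splitting} I would use the splitting $a = a_1 + a_2$ into the diagonal and off-diagonal parts of $B(t)$ introduced in \Cref{sec:mainassumptions}; boundedness of both pieces $\mathcal V\to\mathcal V^*$ and Lipschitz continuity in $t$ are inherited from $B$. Part~(a) follows by first establishing that $A_1(t)u\in T_u\mathcal M$ for smooth $u\in H^2(\Omega)\cap H^1_0(\Omega)\cap \mathcal M$ via \Cref{lm:tangentspacepropertystrong}, and then passing to general $u\in \mathcal V\cap \mathcal M$ by the density argument written out in \Cref{sec:mainassumptions}, which uses the strong $L_2$-convergence of the tangent projectors from \Cref{prop: curvature estimate} together with the uniform $H_0^1$-bound from \Cref{prop:H1projectorcompat} to pass to the limit weakly in $H_0^1$. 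Part~(b) follows by integration by parts from the mixed-smoothness bound $\|\nabla_{x_\mu}\nabla_{x_\nu}u\|_{L_2(\Omega)}\le \frac{1}{2\sigma}\|u\|_{H_0^1(\Omega)}^2$, so that one may take $\eta = 2$.

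Once these verifications are assembled, \Cref{thm:existandunique} applies verbatim and delivers every conclusion: the maximal time $T^*\in(0,T]$, the lower bound $T^*\ge \min(\sigma^2/c,T)$, uniqueness in $W(0,T^*;H_0^1,L_2)\cap L_2(0,T^*;H_0^1)$ (note $\eta=2$), the blow-up criterion at $T^*$ when $T^* < T$, and the three energy estimates with the constants $C_1,C_2,C_3$ from \cite[Lemma~4.4]{Bachmayr21}. There is no real obstacle at this stage; the most delicate input is the second curvature estimate of \Cref{prop: curvatur estimate 2}, which in the abstract framework required the relative-boundary distance to be identified via \Cref{prop:distequal} with the corresponding distance on the finite-dimensional core manifold, but this has already been handled upstream.
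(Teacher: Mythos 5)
Your proposal is correct and matches the paper's proof, which likewise obtains Theorem~\ref{thm:mainmodel} by verifying Assumptions~\ref{property:regularity}--\ref{property:splitting} through the results of Sections~\ref{sec:tensor train model} and~\ref{sec:mainassumptions} and then invoking Theorem~\ref{thm:existandunique}. Your observation that $\eta=2$ (so that the $L_\eta(0,T^*;\mathcal V)$ qualification is absorbed into $W(0,T^*;H_0^1,L_2)$) correctly accounts for the form of the uniqueness statement.
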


\begin{theorem}[Convergence of spatial discretizations]\label{thm:modelconvergence_space-discr}
  Let $\mathcal V_h$ be of the form~\eqref{eq:discretemodelspaces}.
  Let $u_{0,h}\in \mathcal M \cap \mathcal V_h$ define a sequence that converges to $u_0$ in ${H_0^1(\Omega)}$ as $h \searrow 0$ and let $u_0$ have positive ${L_2(\Omega)}$-distance $\sigma$ to the relative boundary $\overline{\mathcal M}^\mathsf w\setminus \mathcal M$.
  Then there exists a constant $c > 0$ independent of $\sigma$ and a constant $h_0>0$ such that there is a unique  
  $u_h$ in $W(0,T^*;{H_0^1(\Omega)},{L_2(\Omega)})\cap L_\eta(0,T^*;{H_0^1(\Omega)})$
  that solves \Cref{problem discrete} on the time interval $[0,T^*]$ when $T^* < \sigma^2/c$ for all $h\leq h_0$. Furthermore, $u_h$ converges to the unique solution $u$ of \Cref{modelproblem} in $W(0,T^*;{H_0^1(\Omega)},{L_2(\Omega)})\cap L_\eta(0,T^*;{H_0^1(\Omega)})$ weakly in $L_2(0,T^*;{H_0^1(\Omega)})$ and strongly in $C(0,T^*;{L_2(\Omega)})$, while the weak derivatives $u_h'$ converge weakly to $u'$ in $L_2(0,T^*,{L_2(\Omega)})$.
  \end{theorem}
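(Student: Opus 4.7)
The plan is to deduce Theorem~\ref{thm:modelconvergence_space-discr} as a direct application of the abstract convergence result Theorem~\ref{thm:convergence_space-discr} with $\mathcal V = H^1_0(\Omega)$, $\mathcal H = L_2(\Omega)$, and $\mathcal V_h$ as in~\eqref{eq:discretemodelspaces}. To this end, I would simply collect the verifications of Assumptions~\ref{property:regularity}--\ref{property:splitting} and~\ref{property:discr_approximation}--\ref{property:discr_compat} that were carried out in Section~\ref{sec:mainassumptions}, and check that nothing else needs to be supplied beyond what is required by the hypotheses of Theorem~\ref{thm:convergence_space-discr}.

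Concretely, I would go through the list as follows. Assumption~\ref{property:regularity} is part of the hypotheses of the theorem (with $f\in L_2(0,T;L_2(\Omega))$ and $u_{0,h}\to u_0$ in $H^1_0(\Omega)$). Assumption~\ref{property:cone} is immediate from the invariance property~\eqref{eq: invariance Mhat} of $\mathcal M_{\mathrm c}$. Assumption~\ref{property:curvature} is obtained by combining the curvature estimates of \Cref{prop: curvature estimate} and \Cref{prop: curvatur estimate 2} with \Cref{prop:singvaldistest}, which shows that on every weakly compact subset $\mathcal M'\subseteq \mathcal M$ the distance $\sigma$ to $\overline{\mathcal M}^w\setminus \mathcal M$ is bounded from below and hence $\kappa(\mathcal M')$ is finite. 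Assumption~\ref{property:compat} was established in Section~\ref{sec:mainassumptions} via the explicit admissible curve construction and \Cref{prop:H1projectorcompat}. Assumption~\ref{property:splitting} with the splitting $A = A_1 + A_2$ induced by the diagonal and off-diagonal parts of $B(t)$ was verified using \Cref{lm:tangentspacepropertystrong} and the mixed smoothness lemma preceding Section~\ref{sec:assumptionsBmodel}, with exponent $\eta = 2$. Finally, Assumptions~\ref{property:discr_approximation} and~\ref{property:discr_compat} were checked in Section~\ref{sec:assumptionsBmodel} for the product discretizations~\eqref{eq:discretemodelspaces}.

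Once all assumptions are verified, Theorem~\ref{thm:convergence_space-discr} directly yields the existence of $h_0>0$ and $c>0$ such that for $T^*<\sigma^2/c$ and all $h\leq h_0$, \Cref{problem discrete} admits a unique solution $u_h\in W(0,T^*;H^1_0(\Omega),L_2(\Omega))\cap L_\eta(0,T^*;H^1_0(\Omega))$, together with the asserted convergence modes: weak in $L_2(0,T^*;H^1_0(\Omega))$, strong in $C(0,T^*;L_2(\Omega))$, and weak convergence of the time derivatives in $L_2(0,T^*;L_2(\Omega))$. Since the abstract result is applied verbatim, no additional argument is required beyond assembling these ingredients.

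There is essentially no new obstacle at this stage, because the genuine work has already been distributed into Sections~\ref{sec:tensor train model} and~\ref{sec:mainassumptions}. The only point that requires care in writing the proof is to make the correspondence between abstract hypothesis and concrete verification transparent; in particular, one should emphasize that the curvature assumption holds in its stronger global form (required in Theorem~\ref{thm:stability} and implicitly used to control $\kappa(\mathcal M')$ on weakly compact subsets) thanks to \Cref{prop: SubspaceTTCurvature}, and that the exponent $\eta$ arising from \ref{property:partA2} for the off-diagonal part of $B(t)$ governs the function space $L_\eta(0,T^*;H^1_0(\Omega))$ in which uniqueness is asserted.
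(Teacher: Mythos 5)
Your proposal is correct and coincides with the paper's own argument: the theorem is stated as a direct consequence of Theorem~\ref{thm:convergence_space-discr} once Assumptions~\ref{property:cone}--\ref{property:splitting} and \ref{property:discr_approximation}--\ref{property:discr_compat} have been verified for the model problem, which is exactly the content of Sections~\ref{sec:tensor train model} and~\ref{sec:mainassumptions}. Your accounting of where each assumption is established (including $\eta=2$ from the mixed-smoothness bound and the global curvature estimates from Propositions~\ref{prop: curvature estimate}, \ref{prop: curvatur estimate 2} and~\ref{prop: SubspaceTTCurvature}) matches the paper.
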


\begin{theorem}[Stability]\label{thm:stabilitymodel}
  Let $u,v \in W(0,T^*;{H_0^1(\Omega)},{L_2(\Omega)})$ be two solutions of \Cref{modelproblem} on a time interval $[0,T^*]$ corresponding to right-hand sides $f,g \in L_2(0,T;{L_2(\Omega)})$ and initial values $u_0, v_0 \in \mathcal M$, respectively. Assume that the continuous representatives $u,v\in C(0,T^*;{L_2(\Omega)})$ have pointwise positive ${L_2(\Omega)}$-distance to $\overline{\mathcal M}^w \setminus \mathcal M$  of at least~$\sigma$. 
  Then for any $\varepsilon >0$,
  \[
  \|u(t)-v(t)\|_{L_2(\Omega)}^2\leq \left(\|u_0-v_0\|_{L_2(\Omega)}^2 +\frac{1}{\varepsilon}\int_0^t\|f(s)-g(s)\|_{L_2(\Omega)}^2\, ds\right)\exp(\Lambda(t)+\varepsilon t),
  \]
  where 
  \begin{multline*}
    \Lambda(t)\coloneqq 2\kappa\int_0^t \|u'(s)\|_{L_2(\Omega)}+\|v'(s)\|_{L_2(\Omega)}
    +
    \gamma\left(\|u(s)\|_{H_0^1(\Omega)}^\eta+\|v(s)\|_{H_0^1(\Omega)}^\eta\right)
    \\
    +
    \|f(s)\|_{L_2(\Omega)}+\|g(s)\|_{L_2(\Omega)}\, ds
     < \infty    
  \end{multline*}
  with $\kappa=\kappa(\sigma)=\frac{\sqrt{d+c^2}}{\sigma}$ from \Cref{prop: curvatur estimate 2}.
\end{theorem}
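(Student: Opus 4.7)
The plan is to derive Theorem~\ref{thm:stabilitymodel} as a direct specialization of the abstract stability estimate Theorem~\ref{thm:stability} to the Gelfand triplet $H_0^1(\Omega) \hookrightarrow L_2(\Omega) \hookrightarrow H^{-1}(\Omega)$ with the low-rank manifold $\mathcal M \subset L_2(\Omega)$ of the form~\eqref{eq: Tucker format}. The bulk of the work has already been done: Section~\ref{sec:mainassumptions} verifies Assumptions~\ref{property:cone}--\ref{property:splitting} for the model problem, so once the hypotheses of Theorem~\ref{thm:stability} are matched, the conclusion follows verbatim.

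First I would set $\mathcal H = L_2(\Omega)$, $\mathcal V = H_0^1(\Omega)$, and take the bilinear form $a(\cdot,\cdot;t)$ from the diffusion operator. The assumptions of boundedness, symmetry, coercivity, and Lipschitz continuity of $a$ in $t$ are inherited from the corresponding properties of $B(t)$. Assumption~\ref{property:regularity} is exactly what is presupposed on $f,g$ and $u_0,v_0$ (after applying Theorem~\ref{thm:mainmodel} to get the $H_0^1$ regularity of $u,v$), Assumption~\ref{property:cone} is immediate from invariance~\eqref{eq: invariance Mhat}, Assumption~\ref{property:compat} was verified in Section~\ref{sec:mainassumptions}, and Assumption~\ref{property:splitting} is the splitting $A = A_1 + A_2$ into the diagonal and off-diagonal parts of $B(t)$ together with the lemmas of Section~\ref{sec:mainassumptions}. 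The value of $\eta$ in Assumption~\ref{property:partA2} is $\eta = 2$ by the mixed-smoothness lemma just before Section~\ref{sec:assumptionsBmodel}, so $u,v \in L_\infty(0,T^*;H_0^1(\Omega)) \subset L_\eta(0,T^*;H_0^1(\Omega))$ as required.

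Next I would show that the hypothesis of Theorem~\ref{thm:stability} that $u,v$ take values in a weakly compact subset $\mathcal M' \subset \mathcal M$ is fulfilled. By assumption, the continuous representatives of $u$ and $v$ keep $L_2$-distance at least $\sigma > 0$ from $\overline{\mathcal M}^w \setminus \mathcal M$. By Theorem~\ref{thm:mainmodel}, they are uniformly bounded in $H_0^1(\Omega)$ on $[0,T^*]$, and hence lie in a ball of $H_0^1(\Omega)$ that is weakly compact in $L_2(\Omega)$ by the Rellich--Kondrachov compact embedding $H_0^1(\Omega) \hookrightarrow L_2(\Omega)$. Intersecting this ball with the $\sigma$-neighborhood of $\mathcal M$ in $L_2(\Omega)$ and with $\mathcal M$ itself gives the desired weakly compact subset $\mathcal M'$ of $\mathcal M$. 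On this set, Proposition~\ref{prop: curvatur estimate 2} supplies the curvature constant $\kappa = \kappa(\sigma) = \sqrt{d+c^2}/\sigma$, and Assumption~\ref{property:partA2} supplies $\gamma = \gamma(\mathcal M')$.

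With all hypotheses in place, I would simply invoke Theorem~\ref{thm:stability} and rewrite the resulting inequality with $\mathcal H = L_2(\Omega)$ and $\mathcal V = H_0^1(\Omega)$. The exponential integrand $\Lambda(t)$ is finite because $\|u'\|_{L_2}$, $\|v'\|_{L_2}$, $\|f\|_{L_2}$, $\|g\|_{L_2}$ are in $L_1(0,T^*)$ by $L_2(0,T^*;L_2(\Omega)) \subset L_1(0,T^*;L_2(\Omega))$, and $\|u\|_{H_0^1}^\eta, \|v\|_{H_0^1}^\eta$ are integrable since $u,v \in L_\infty(0,T^*;H_0^1(\Omega))$. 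The only nontrivial step in the whole argument is the weak-compactness verification above; everything else is a direct translation of the abstract theorem to the concrete function-space setting.
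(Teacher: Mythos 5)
Your proposal takes exactly the route the paper intends: Theorem~\ref{thm:stabilitymodel} is stated in the paper without a separate proof, as a direct specialization of the abstract Theorem~\ref{thm:stability} once the Assumptions~\ref{property:cone}--\ref{property:splitting} have been verified in Sections~\ref{sec:tensor train model} and~\ref{sec:mainassumptions}, and your identification of $\eta=2$ and of $\kappa=\sqrt{d+c^2}/\sigma$ via Proposition~\ref{prop: curvatur estimate 2} is right. Two small corrections to the step you single out as nontrivial. First, the set you describe is not quite weakly compact as written: intersecting an $H_0^1$-ball ``with $\mathcal M$ itself'' does not give a closed set, since $\mathcal M$ is not closed in $L_2(\Omega)$, and the ``$\sigma$-neighborhood of $\mathcal M$'' is the wrong object --- what you need is $\{w\in\overline{\mathcal M}^w:\ \dist_{L_2}(w,\overline{\mathcal M}^w\setminus\mathcal M)\ge\sigma\}$ intersected with a compact set; alternatively, the union of the ranges $u([0,T^*])\cup v([0,T^*])$ is already norm-compact (continuous image of a compact interval) and lies in $\mathcal M$ by the distance hypothesis and \Cref{lm: closureManifold}, so no $H_0^1$ bound or Rellich argument is needed at all. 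Second, you cannot invoke Theorem~\ref{thm:mainmodel} for a priori bounds on $u$ and $v$, since that theorem only guarantees existence of \emph{some} solution with the stated regularity, whereas here $u,v$ are arbitrary given solutions; but this is harmless, because $u,v\in W(0,T^*;H_0^1(\Omega),L_2(\Omega))\subset L_2(0,T^*;H_0^1(\Omega))=L_\eta(0,T^*;H_0^1(\Omega))$ for $\eta=2$, which is all that Theorem~\ref{thm:stability} requires.
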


\begin{appendix}

\section{Proofs of \Cref{thm: manifold} and \Cref{prop: TT curvature}}

\begin{proof}[Proof of \Cref{thm: manifold}]
  Ad~(i).~We fix a particular $X_* = C_* \times_1 U_*^1 \times_2 \dots \times_d U_*^d$. The construction of the submersion follows~\cite{Steinlechner16}, where this has been done for manifolds of fixed multilinear rank in finite-dimensional tensor spaces. Here we additionally have to take the constraint $C \in \mathcal{M}_\mathrm{c}$ for the coefficient tensor into account. In the following, $\mathcal O$ is an open neighborhood of $X_*$ in $\ell_2(\mathbb N^d)$ that can always be chosen sufficiently small to ensure that all maps are well defined.
  
  Since $X_* \in \mathcal M$, the matricizations $M^{\mu}_{X_*}$ admit low-rank decompositions~\eqref{eq: representation T_u}, which can be written in matrix product form as 
  \[
   M^\mu_{X_*} = U_*^\mu (V_*^\mu)^\T.
  \]
  Here the columns of $U_*^\mu$ and $V_*^\mu$ are bases of the minimal subspaces $\mathcal U^\mu_* \subset \ell_2(\mathbb N)$ and $\mathcal V_*^\mu \subset \ell_2(\mathbb N^{d-1})$, respectively. For $X \in \mathcal M$ sufficiently close to $X_*$, it will be useful to define a particular basis $U_X^\mu$ for the $\mu$-th minimal subspace $\mathcal U^\mu$, $\mu=1,\ldots,d$, as a continuous function of $X$. To this end, we choose
  \begin{equation}\label{eq: basis U_X}
   U^\mu_X =  M_X^{\mu} (V_*^\mu)_+^\T,
  \end{equation}
  where $Y_+ = [Y^\T Y]^{-1} Y^\T$ denotes the pseudoinverse of a matrix with full column rank. Then $U^\mu_X$ has full column rank for $X$ close enough to $X_*$, which follows from $U^\mu_X \to U^\mu_*$ for all $\mu$ for $X \to X_*$ and the lower semicontinuity of the rank. As a result, every $X$ in the neighborhood of $X_*$ can be written as
  \[
  X = C_X \times_1 U_X^1 \times_2 \dots \times_d U_X^d.
  \]
  Moreover, we can assume that for $\mu=1,\dots,d$, the $r_\mu \times r_\mu$ matrices $(U_*^\mu)_+ U_X^\mu$ are invertible (again, since $U^\mu_X \to U^\mu_*$ for all $\mu$ for $X \to X_*$). We then also consider
  \begin{equation}\label{eq: barC_X}
   \bar C_X = X \times_1 (U_*^1)_+ \times_2 \dots \times_d (U_*^d)_+ = C_X \times_1  (U_*^1)_+ U_X^1 \times_2 \dots \times_d (U_*^d)_+ U_X^d .
  \end{equation}
  Clearly, $\bar C_{X_*} = C_*$. Noting that by~\eqref{eq: invariance Mhat} the condition $C_X \in \mathcal{M}_\mathrm{c}$ in~\eqref{eq: Tucker format} is independent under invertible changes of basis, we arrive at the following local description of $\mathcal M$:
  \begin{equation}\label{eq: local description}
   \mathcal M \cap \mathcal O = \{ X \in \mathcal O \colon \bar C_X \in \mathcal{M}_\mathrm{c}, \ \rank(M_X^\mu) = r_{\mu} \text{ for $\mu = 1,\dots,d$} \}.
  \end{equation}
  
  We next describe the constraints as preimages of smooth maps. We begin with the constraint $\bar C_X \in \mathcal{M}_\mathrm{c}$. Since $\mathcal{M}_\mathrm{c}$ is assumed to be an embedded submanifold of $\R^{r_1 \times \dots \times r_d}_*$, there exists a submersion $\phi$ from an open neighborhood of $C_* \in \mathcal{M}_\mathrm{c}$ to $\R^q$ (here $q$ is the co-dimension of $\mathcal{M}_\mathrm{c}$) such that the conditions $C \in \mathcal{M}_\mathrm{c}$ and $\phi(C) = 0$ are equivalent in this neighborhood. Considering
  \[
   g_0  : \mathcal O \to \R^q, \quad X \mapsto \phi(\bar C_X),
  \]
  we then locally have $\bar C_X \in \mathcal{M}_\mathrm{c}$ if and only if $g_0(X) = 0$. 
  
  We now consider the rank constraints in~\eqref{eq: local description}, which first will be reformulated. Let $P_{\mathcal U_*^\mu} = U_*^\mu (U_*^\mu)_+$ denote the orthogonal projections on the $\mu$-th minimal subspaces of $X_*$, and let
  \[
  \mathbb P_\mu = P_{\mathcal U_*^1} \otimes \dots \otimes  P_{\mathcal U_*^{\mu}} \otimes \mathrm{id}_{\mu+1} \otimes \dots \otimes \mathrm{id}_d,
  \]
  with the convention $\mathbb P_0 = \mathrm{id}$. In the following we consider tensors $\mathbb P_{\mu-1}(X)$, that is, orthogonal projections of $X$ onto the subspaces $\mathcal U_1^* \otimes \dots \otimes \mathcal U_*^{\mu-1} \otimes \ell_2(\mathbb N^{d-\mu+1})$. In particular, we claim that for any fixed $1 \le \nu \le d$ there exists a neighborhood of $X_*$ in which the  condition $\rank(M_{X}^\mu) = r_\mu$ for $\mu=1,\dots,\nu$ is equivalent with $\rank(M_{\mathbb P_{\mu-1}(X)}^\mu) = r_\mu$ for all $\mu=1,\dots,\nu$. This is shown by induction over $\nu$. For $\nu=1$ the statement is trivial since $\mathbb P_0 = \mathrm{id}$. In the induction step $\nu-1 \to \nu$, it suffices to show that in some neighborhood of $X_*$, any $X$ satisfying $\rank(M_X^\mu) = r_\mu$ for $\mu = 1,\dots,\nu-1$ also satisfies $\rank(M_X^\nu) = \rank(M_{\mathbb P_{\nu-1}(X)}^\nu)$. Any such $X$ lies in a subspace $\mathcal U^1 \otimes \dots \otimes \mathcal U^{\nu-1} \otimes \ell_2(\mathbb N^{d - \nu +1})$, where $\mathcal U^\mu$ are the minimal $r_\mu$-dimensional subspaces of $X$. We choose a neighborhood of $X_*$ in which the restrictions of all $P_{\mathcal U_*^\mu}$ to $\mathcal U^\mu$ are necessarily invertible maps between $\mathcal U^\mu$ and $\mathcal U_*^\mu$ (which is equivalent with $(U^\mu_*)_+  U^\mu_X$ being invertible). 
  Hence in this neighborhood the projection $\mathbb P_{\nu-1}$ is a tensor product of invertible operators between $\mathcal U^1 \otimes \dots \otimes \mathcal U^{\nu-1} \otimes \ell_2(\mathbb N^{d - \nu +1})$ and $\mathcal U^1_* \otimes \dots \otimes \mathcal U^{\nu-1}_* \otimes \ell_2(\mathbb N^{d - \nu +1})$, which hence leaves all matricization ranks invariant. Hence, for such $X$, we obtain that $\rank(M_{X}^\nu) = \rank(M_{\mathbb P_{\nu-1} X}^\nu)$, which completes the induction.
  
   Applying the above equivalence with $\nu = d$ allows us to replace the conditions $\rank(M_X^\mu) = r_{\mu}$ in~\eqref{eq: local description} with $\rank(M_{\mathbb P_{\mu-1}(X)}^\mu) = r_\mu$ for $\mu =1 ,\ldots, d$. These latter conditions are now handled via Schur complements as follows. Note that
  \[
  M_{\mathbb P_{\mu-1}(X)}^\mu \in \ell_2(\mathbb N) \otimes [\mathcal U_*^1 \otimes \dots \otimes \mathcal U_*^{\mu-1} \otimes \ell_2(\mathbb N^{d -\mu})].
  \]
  We consider orthogonal decompositions
  \[
  \ell_2(\mathbb N) = \mathcal U_*^\mu \oplus  (\mathcal U_*^\mu)^\perp
  \]
  and
  \[
  \mathcal U_*^1 \otimes \dots \otimes \mathcal U_*^{\mu-1} \otimes \ell_2(\mathbb N^{d -\mu}) = \mathcal V_*^\mu \oplus \mathcal W^\mu_*,
  \]
  which is possible since $\mathcal V_*^\mu$ is even contained in the smaller subspace of $\mathcal U_*^1 \otimes \dots \otimes \mathcal U_*^{\mu-1} \otimes \mathcal U_*^{\mu+1} \otimes \dots \otimes \mathcal U_*^d$ (which in turn follows from $X_* \in \mathcal U_*^1 \otimes \dots \otimes \mathcal U_*^d$). Hence in this notation
  \[
  M_{\mathbb P_{\mu-1}(X)} \in [\mathcal U_*^\mu \oplus (\mathcal U_*)^\perp] \otimes [\mathcal V_*^\mu \oplus \mathcal W^\mu_*].
  \]
  By applying a block decomposition of $M_{\mathbb P_{\mu-1}(X)}^\mu$ into the four corresponding parts,
  \begin{align*}
   Q^\mu_X &= P_{\mathcal U^\mu_*} M^\mu_{\mathbb P_{\mu-1}(X)} P_{\mathcal V^\mu_*} \in \mathcal U^\mu_* \otimes \mathcal V^\mu_*, \\ 
  R^\mu_X &= P_{\mathcal U^\mu_*} M^\mu_{\mathbb P_{\mu-1}(X)} P_{\mathcal W^\mu_*} \in \mathcal U^\mu_* \otimes \mathcal W^\mu_*, \\
  S^\mu_X &= (\id - P_{\mathcal U^\mu_*})M^\mu_{\mathbb P_{\mu-1}(X)} P_{\mathcal V^\mu_*} \in (\mathcal U^\mu_*)^\perp \otimes \mathcal V^\mu_*, \\
  T^\mu_X &= (\id - P_{\mathcal U^\mu_*} ) M^\mu_{\mathbb P_{\mu-1}(X)}  P_{\mathcal W^\mu_*} \in (\mathcal U^\mu_*)^\perp \otimes \mathcal W^\mu_*,
  \end{align*}
  we can consider the Schur complement functions 
  \begin{equation}\label{eq: definition g_mu}
  g_\mu : \mathcal O \to  (\mathcal U_*^\mu)^\perp \otimes \mathcal W_*^\mu, \quad X \mapsto T_X^\mu - S_X^\mu (Q_X^\mu)^{-1} R_X^\mu.
  \end{equation}
  Note that $Q_X^\mu$ is indeed invertible (as an $r_\mu \times r_\mu$ matrix in $\mathcal U^\mu_* \otimes \mathcal V^\mu_*$) for $X$ close enough to $X_*$, since $Q_{X_*}^\mu = P_{\mathcal U^\mu_*} M^\mu_{X_*} P_{\mathcal V^\mu_*}$ is invertible. As for finite matrices, we then have $g_\mu(X) = 0$ if and only if $\rank(M_{\mathbb P_{\mu-1}(X)}^\mu) = r_\mu$.
  
  Defining
  \[
  g = (g_0,g_1,\dots,g_d) \colon \mathcal O \to \R^q \times [(\mathcal U_*^1)^\perp \otimes \mathcal W_*^1] \times \dots \times [(\mathcal U_*^d)^\perp \otimes \mathcal W_*^d]
  \]
  we conclude from all the previous considerations that~\eqref{eq: local description} can be written as
  \[
  \mathcal M \cap \mathcal O = g^{-1}(0).
  \]
  
  We need to show that $g$ is a submersion in $X_*$, that is, $g'(X_*)$ is surjective. First note that for $\mu = 1,\dots,d$ we have
  \begin{equation}\label{eq: derivative g_mu}
  g_\mu'(X_*)[H] = T_{H}^\mu = (\id - P_{\mathcal U^\mu_*} ) M^\mu_{\mathbb P_{\mu-1}(H)} P_{\mathcal W^\mu_*},
  \end{equation}
  that is, $g_\mu'(X_*)$ is the orthogonal projection (of the $\mu$-th matricization) onto the subspace $(\mathcal U_*^\mu)^\perp \otimes \mathcal W_*^\mu$. This follows by applying a product rule to~\eqref{eq: definition g_mu} and noting that $R^\mu_{X_*} = 0$ and $S^\mu_{X_*} = 0$. When viewed as subspaces of $\ell_2(\mathbb N^d)$, the subspaces $(\mathcal U_*^\mu)^\perp \otimes \mathcal W_*^\mu$ are mutually orthogonal to each other, since they are contained in the pairwise orthogonal subspaces $\mathcal U_*^1 \otimes \dots \otimes \mathcal U_*^{\mu-1} \otimes (\mathcal U_*^\mu)^\perp \otimes \ell_2(\mathbb N^{d-\mu})$, respectively. Moreover, all of them are orthogonal to the subspace $\mathcal U_*^1 \otimes \dots \otimes \mathcal U_*^d$. Regarding $g_0$, note that
  \begin{equation}\label{eq: g0 on subspace}
  g_0(C \times_1 U^1_* \times_2 \dots \times_d U^d_*) = \phi(C)
  \end{equation}
  (again with $U^\mu_{X_*} = U^\mu_*$), which shows that already the restriction of $g_0$ to $\mathcal U_*^1 \otimes \dots \otimes \mathcal U_*^d$ is a submersion in $X_*$, since $\phi'(C_*)$ is surjective to $\R^q$. It is now easy to conclude from these facts that $g'(X_*)$ is altogether surjective.
  
  By the local submersion theorem in Hilbert space, see~\cite[Thm.~73.C]{Zeidler88}, $\Mc \cap \mathcal O = g^{-1}(0)$ is a smooth submanifold of $\mathcal H$. The tangent space $T_{X_*} \mathcal M$ at $X_*$ is the null space of the $g'(X_*)$. The proof of part (i) is therefore completed.
  
  \bigskip
  
  Ad~(ii).~The existence of the continuously Fr\'echet-differentiable homeomorphism $\varphi$ of the asserted form is a consequence of Ljusternik's submersion theorem as stated in~\cite[Thm.~43.C]{Zeidler85}. To show that (in a possibly smaller neighborhood around zero) $\varphi$ is also an immersion, that is, $\varphi'(\xi) \colon T_{X_*} \mathcal M \to T_{\varphi(\xi)} \mathcal M$ is injective and its range splits, it suffices to show that there exists $c>0$ such that $\| \varphi'(\xi)h\| \ge c \| h \|$ for all $h \in T_{X_*} \mathcal M$. This, however, follows immediately from the continuity of $\varphi'$ and $\varphi'(0)h = h$. By definition, $\varphi$ is therefore a local embedding~\cite[Def.~73.43]{Zeidler88}.
  
  \bigskip
  
  Ad~(iii). ~Let $\xi$ be an element of the form~\eqref{eq: tangent vectors} (but at $X_* = C_* \times_1 U^1_* \times_2 \dots \times_d U^d_*$). Since $\dot C \in T_{C_*} \mathcal{M}_\mathrm{c}$, there exists a curve $C(t)$ in $\mathcal{M}_\mathrm{c}$ such that $C(0) = C_*$ and $C'(0) = \dot C$. For small enough $t$, 
  \[
  X(t) = C(t) \times_1 (U_*^1 + t \dot U^1) \times_2 \dots \times_d (U_*^d + t \dot U^d)
  \]
  then defines a curve in $\mathcal M$ because $U_*^\mu + t \dot U^\mu$ has full column rank for $\mu = 1,\ldots,d$. Obviously $X(0) = X_*$, and by multilinearity it is easily seen that $X'(0) = \xi$, which shows $\xi \in T_{X_*} \mathcal M$.
  
  In order to show that all tangent vectors are of the form~\eqref{eq: tangent vectors}, let $\xi \in T_{X_*} \mathcal M$ and a corresponding curve $X(t) \in \mathcal M$ with $X(0) = X_*$ and $X'(0) = \xi$ be given. For small enough~$t$ we represent $X(t)$ in the particular bases $U^\mu_{X(t)}$ defined in~\eqref{eq: basis U_X} as
  \[
  X(t) = C_{X(t)} \times_1 U^1_{X(t)} \times_2 \dots \times_d U^d_{X(t)},
  \]
  where $C_{X(t)}$ is in $\mathcal{M}_\mathrm{c}$. Clearly, the curves $t \mapsto U^\mu_{X(t)}$ are smooth. It implies that for small enough $t$ the pseudoinverses $t \mapsto (U^\mu_{X(t)})_+$ are also smooth functions. It then follows from~\eqref{eq: barC_X}, by applying an inverse transformation, that also $t \mapsto C_{X(t)}$ is a smooth curve, since $\bar C_{X(t)}$ is. By the product rule we then get that
  \begin{equation}\label{eq: expression xi}
  \xi = X'(0) = \tilde C \times_1 U^1_* \times_2 \dots \times_d U^d_* + C_* \times_1 \tilde{U}^1 \times_2 \dots \times_d U^d_* + \dots + C_* \times_1 U^1_* \times_2 \dots \times_d \tilde{U}^d,
  \end{equation}
  where $\tilde C \in T_{C_*} \mathcal{M}_\mathrm{c}$ is the derivative of $t \mapsto C_{X(t)}$ in $t=0$, and $\tilde U^\mu \in (\ell_2(\mathbb N))^{r_\mu}$ is the derivative of $t \mapsto U^\mu_{X(t)}$ in $t =0$ for $\mu=1,\ldots,d$. By decomposing every column of $\tilde U^\mu$ into the span of $U^\mu_*$ and its orthogonal complement, we can write
  \[
  \tilde U^\mu = U^\mu_* S_\mu + \dot U^\mu_*,
  \]
  where $S_\mu$ is some $r_\mu \times r_\mu$ matrix and $(U^\mu_*)^\T \dot U^\mu_* = 0$. Expanding the expression~\eqref{eq: expression xi} we then have
  \[
  \xi = K \times_1 U^1_* \times_2 \dots \times_d U_*^d + C_* \times_1 \dot{U}^1_* \times_2 \dots \times_d U^d_* + \dots + C_* \times_1 U^1_* \times_2 \dots \times_d \dot{U}^d_*
  \]
  where
  \[
  K = \tilde C + C_* \times_1 S_1 \times_2 \id \times_3 \dots \times_d \id + \dots + C_* \times_1 \id \times_2 \dots \times_d S_d.
  \]
  It remains to show that $K \in T_{C_*} \mathcal{M}_\mathrm{c}$ to conclude that $\xi$ is of the asserted form~\eqref{eq: tangent vectors}. Since $T_{C_*} \mathcal{M}_\mathrm{c}$ is a linear space it suffices to show this for every term. For $\tilde C$ there is nothing to show. The sum of the remaining terms equals the derivative of the curve
  \[
  C(t) = C_* \times_1 (\id + t S_1) \times_2 \dots \times_d (\id + t S_d)
  \]
  at $t=0$, which for small enough $t$ lies in $\mathcal{M}_\mathrm{c}$ by the invariance condition~\eqref{eq: invariance Mhat}. Hence $C'(0) \in T_{C_*} \mathcal{M}_\mathrm{c}$.
  \end{proof}

  \begin{proof}[Proof of \Cref{prop: TT curvature}]
    Let $X^{\{1,\ldots, \mu\}} \in \R^{N_1\cdots N_\mu \times B_{\mu+1} \cdots N_d} $ be a matricization of $X$. We can decompose 
    \[
     X^{\{1,\ldots, \mu\}} 
     =
     (U_1\otimes \id_{N_2\cdots N_\mu})\cdots(U_{\mu-1}\otimes \id_{N_\mu}) U_\mu 
     \Sigma_\mu 
     V_{\mu+1}^\T
    \cdots (\id_{N_{\mu+1}\cdots N_{d-1}} \otimes V_d^\T),
    \]
    with $U_\nu^\T U_\nu^{} = \id_{r_{\nu}}$ and $V_\nu^\T V_\nu^{} = \id_{r_{\nu}}$. Furthermore, we define the spaces $\mathcal U_{\{1,\ldots, \mu\}}$ and  $\mathcal V_{\{\mu+1,\ldots, d\}}$ via their respective orthonormal bases
    \[
     U_{\{1,\ldots, \mu\}} =  (U_1\otimes \id_{N_2\cdots N_\mu})\cdots(U_{\mu-1}\otimes \id_{N_\mu}) U_\mu 
     \] 
     and 
     \[
     V_{\mu+1,\ldots, d}^\T = V_{\mu+1}^\T
    \cdots (\id_{N_{\mu+1}\cdots N_{d-1}} \otimes V_d^\T).
    \]
    The projection $P_X$ can be decomposed as
   \[
     P_X = P^X_1 +\ldots +P^X_d,
   \]
     where 
   \[
     P^X_\mu = (P_{\mathcal{U}_{\{1,\ldots, \mu-1\}}}\otimes \id_{N_\mu}-    P_{\mathcal{U}_{\{1,\ldots, \mu\}}}) \otimes P_{\mathcal V_{\{\mu+1,\ldots, d\}}}
   \]
   for $\mu = 1,\ldots, d-1$ and 
   \[
     P^X_d = P_{\mathcal{U}_{\{1,\ldots, d-1\}}} \otimes \id_{N_d};
   \]
   see, e.g. \cite{Lubich15} or \cite[Section 9.3.4]{UschmajewVandereycken:20}.
   Let $\tilde {\mathcal U}_{\{1,\ldots, \mu\}}$ and  $\tilde{\mathcal  V}_{\{\mu+1,\ldots, d\}}$ be the analogous spaces for~$Y$.
   Then by \eqref{eq: projector difference},
   \begin{align*}
     \|P^X_\mu- P^Y_\mu\|
     &=
     \|
     (P_{\mathcal{U}_{\{1,\ldots, \mu-1\}}}\otimes \id_{N_\mu}-    P_{\mathcal{U}_{\{1,\ldots, \mu\}}}) \otimes P_{\mathcal V_{\{\mu+1,\ldots, d\}}}\\
     &\quad -
     (P_{\tilde{\mathcal{U}}_{\{1,\ldots, \mu-1\}}}\otimes \id_{N_\mu}-    P_{\tilde{\mathcal{U}}_{\{1,\ldots, \mu\}}}) \otimes P_{\tilde{\mathcal{V}}_{\{\mu+1,\ldots, d\}}}
     \|\\
     &\leq
     \|
     (P_{\mathcal{U}_{\{1,\ldots, \mu-1\}}}-    P_{\tilde{\mathcal{U}}_{\{1,\ldots, \mu-1\}}}) \otimes \id_{N_\mu}\otimes P_{\mathcal V_{\{\mu+1,\ldots, d\}}}\|\\
     &\quad +
     \|
     P_{\tilde{\mathcal{U}}_{\{1,\ldots, \mu-1\}}} \otimes \id_{N_\mu}\otimes (P_{\mathcal V_{\{\mu+1,\ldots, d\}}}-P_{\tilde{\mathcal V}_{\{\mu+1,\ldots, d\}}})\|\\
     &\quad +
     \|
     (P_{\mathcal{U}_{\{1,\ldots, \mu\}}}-    P_{\tilde{\mathcal{U}}_{\{1,\ldots, \mu\}}}) \otimes P_{\mathcal V_{\{\mu+1,\ldots, d\}}}\|\\
     &\quad +
     \|
     P_{\tilde{\mathcal{U}}_{\{1,\ldots, \mu\}}} \otimes  (P_{\mathcal V_{\{\mu+1,\ldots, d\}}}-P_{\tilde{\mathcal V}_{\{\mu+1,\ldots, d\}}})\|\\
     &\leq
     \frac{4}{\sigma}\|X-Y\|
   \end{align*}
 holds and the first desired inequality readily follows. For the other inequalities, we use the decomposition of the identity matrix
 \begin{multline*}
 \id = P_{\mathcal{U}_{\{1,\ldots, d-1\}}} \otimes \id_{N_d}
 + (P_{\mathcal{U}_{\{1,\ldots, d-2\}}}\otimes \id_{N_{d-1}}- P_{\mathcal{U}_{\{1,\ldots, d-1\}}})\otimes \id_{N_d}  \\
 +\ldots+ (\id_{N_1}-P_{\mathcal{U}_{\{1\}}})\otimes \id_{N_2\dots N_d}
 \end{multline*}
 into orthogonal projections onto mutually orthogonal spaces. Then 
 \begin{align*}
   (\id -P_X)(X-Y)
   &=
   (P_{\mathcal{U}_{\{1,\ldots, d-2\}}}\otimes \id_{N_{d-1}}-    P_{\mathcal{U}_{\{1,\ldots, d-1\}}}) \otimes (\id_{N_{d-1}N_d} - P_{\mathcal V_{\{d\}}})(X-Y)\\
   &\quad +\ldots\\
   &\quad + 
   ({\id_{N_{1}}}-    P_{\mathcal{U}_{\{1\}}}) \otimes (\id_{N_{2}\dots N_d} - P_{\mathcal V_{\{2,\ldots,d\}}})(X-Y)\\
   &=
   (P_{\mathcal{U}_{\{1,\ldots, d-2\}}}\otimes \id_{N_{d-1}}-    P_{\mathcal{U}_{\{1,\ldots, d-1\}}}) \otimes ( P_{\tilde{\mathcal V}_{\{d\}}} - P_{\vphantom{\tilde{\mathcal V}_{\{d\}}}{\mathcal V_{\{d\}}}})(X-Y)\\
   &\quad +\ldots\\
   &\quad + 
   ({\id_{N_{1}}}-    P_{\mathcal{U}_{\{1\}}}) \otimes (P_{\tilde{\mathcal V}_{\{2,\ldots,d\}}} - P_{\vphantom{\tilde{\mathcal V}_{\{2,\ldots,d\}}} {\mathcal V_{\{2,\ldots,d\}}}})(X-Y)
 \end{align*}
 holds.
 Note that the operators map onto orthogonal subspaces. Hence, we get the desired estimate
 \[
   \|(\id -P_X)(X-Y)\|\leq \frac{\sqrt{d-1}}{\sigma}\|X-Y\|^2.
 \]
 By continuity of the projection and taking limits, the estimate also holds for $Y\in\overline{\mathcal{M}_\mathrm{c}}$.
 In a similar way, the second inequality follows. 
 \end{proof}
 
\end{appendix}

\subsection*{Acknowledgements}

M.B.\ acknowledges funding by Deutsche Forschungsgemeinschaft (DFG, German Research Foundation) -- Projektnummern 442047500, 501389786.
The work of H.E.~was funded by Deutsche Forschungs\-gemeinschaft (DFG, German Research Foundation) – Projektnummer 501389786.
The work of A.U.~was supported by Deutsche Forschungs\-gemeinschaft (DFG, German Research Foundation) – Projektnummer 506561557.

\bibliographystyle{plain}
\bibliography{BEUparabolic2}

\end{document}